\documentclass[11pt,reqno]{amsart}
\usepackage{amssymb}
\usepackage{amsfonts}
\usepackage{mathrsfs}
\usepackage{amsmath}
\usepackage{graphicx}
\usepackage{hyperref}
\usepackage{float}
\usepackage{epstopdf}
\usepackage{color}
\usepackage{bm}
\usepackage{comment}
\usepackage{soul}
\usepackage{cite}

\def\<{\langle}
\def\>{\rangle}

\usepackage[makeroom]{cancel}
\usepackage{soul}

\allowdisplaybreaks
\newtheorem{theorem}{Theorem}[section]
\newtheorem{proposition}[theorem]{Proposition}
\newtheorem{lemma}[theorem]{Lemma}
\newtheorem{corollary}[theorem]{Corollary}
\newtheorem{remark}[theorem]{Remark}

\newtheorem{definition}[theorem]{Definition}

\def\mcD{\mathcal{D}}
\def\mcE{\mathcal{E}}
\def\mcF{\mathcal{F}}
\def\mcG{\mathcal{G}}
\def\R{\mathbb{R}}

\def\bn{{(n)}}
\def\1{\mathbf{1}}

\def\wh{\widehat} 
\def\wt{\widetilde}

\definecolor{orange}{rgb}{1,0.5,0}

\numberwithin{equation}{section}

\begin{document}
\title[Homogenization of anisotropic diffusion on pre-Sierpi\'{n}ski carpets]{Homogenization of anisotropic diffusion on pre-Sierpi\'{n}ski carpets}
 
\author{Shiping Cao}
\address{Department of Mathematics, The Chinese University of Hong Kong, Shatin, Hong Kong}
\email{spcao@math.cuhk.edu.hk}
\thanks{ The research of SC is partially  supported by the General Research Fund grant (project CUHK14304626) from the Hong Kong Research Grant Council,
and by a direct grant for research (project 4053774) from the Chinese University of Hong Kong}

    \author{Zhen-Qing Chen}
\address{Department of Mathematics, University of Washington, Seattle, WA 98195, USA}
\thanks{ The research of ZC is supported in part by a Simons Foundation fund.}
\email{zqchen@uw.edu}

\author{Hua Qiu}
\address{School of Mathematics, Nanjing University, Nanjing, 210093, P. R. China.}
\thanks{ The research of HQ is partially  supported by the National Natural Science Foundation of China (Grant No. 12471087
	and 12531004).}
\email{huaqiu@nju.edu.cn}

\subjclass[2010]{Primary 28A80, 31E05}

\date{}

\keywords{Sierpi\'nski carpet, Anisotropic diffusion, Homogenization, uniqueness, weak convergence}

\begin{abstract}
We investigate the restoration of isotropy for anisotropic diffusions on  pre-Sierpi\'nski carpets in the plane, a problem  previously studied by Barlow, Hattori, Hattori and Watanabe \cite{BHHW} in which they obtained a weak homogenization property for the ratio of effective resistances of the  anisotropic diffusions. We establish 
 the weak convergence of these anisotropic diffusions  to  Brownian motion on the Sierpi\'nski carpet. As a consequence, we give an affirmative answer to the strong homogenization conjecture raised in \cite[p.3]{BHHW}.
\end{abstract}

\maketitle

\section{Introduction}

Analysis on fractals aims to extend classical theories of partial differential equations, potential theory, and stochastic analysis to spaces with intricate, often non-smooth, geometric structures. A central and challenging problem in this field is the construction of natural diffusion processes, or Brownian motions, on such spaces. Among key models, the Sierpi\'nski carpet stands out due to its simple, highly symmetric, and recursive construction while still having the essence of the non-smoothness, making it a fundamental testbed for developing the analysis on fractals.

In their seminal work, Barlow and Bass \cite{BB, BB3} pioneered the construction of Brownian motion on the standard  Sierpi\'nski carpet $F$
 in $\R^d$ with $d=2$
 in 1989  and $d\geq 3$ in 1999. Their approach took a weak sub-sequential limit of suitably time-changed reflected Brownian motions on a sequence of approximation domains $ \{F_n; n\geq 0\}$ (see Figures \ref{Fig1} and  \ref{Fig2}). It was later shown  in \cite[Remark 5.4]{BBKT} that the  sub-sequential limit can be replaced by a full sequential limit, with the time-scaling factors given by the expected hitting times of the boundary faces not containing $\bf 0$ by the reflected Brownian motion on $F_n$ starting from $\bf 0$. Recently, it was established
  in \cite[Theorem 1.1]{CC} that the time-scaling factors can be taken to be    $3^{n(d_w-d_f)}$. Here, $d_w>2$ is the walk dimension and $d_f$ is the Hausdorff dimension of $F$.  From the perspective of Dirichlet forms, this implies that (see \cite[Theorem 1.3]{CC}) the sequence of the rescaled
   Dirichlet forms
\begin{equation}\label{e:0}
3^{n(d_w-d_f)}\int_{F_n}\nabla f\cdot \nabla g\, dxdy\quad \hbox{ for }f,g\in W^{1,2}(F_n) 
\end{equation}
 is Mosco convergent, as $n\to\infty$, to a Dirichlet form $(\mcE_B,\mcF)$ associated with a  Brownian motion on the carpet $F$,
  which is unique up to a constant time change.

\begin{figure}
    \centering
    \includegraphics[width=0.28\linewidth]{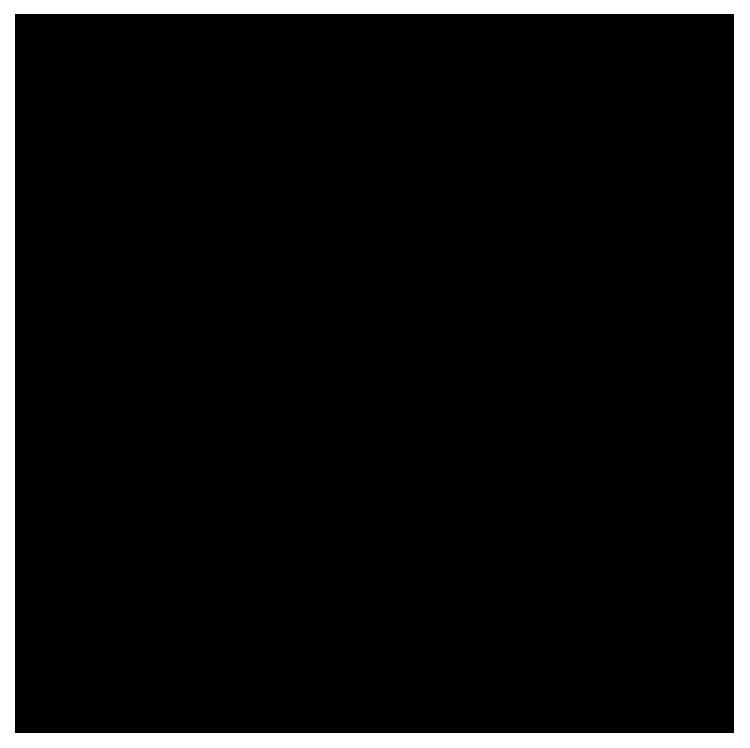}\hspace{0.4cm}
        \includegraphics[width=0.28\linewidth]{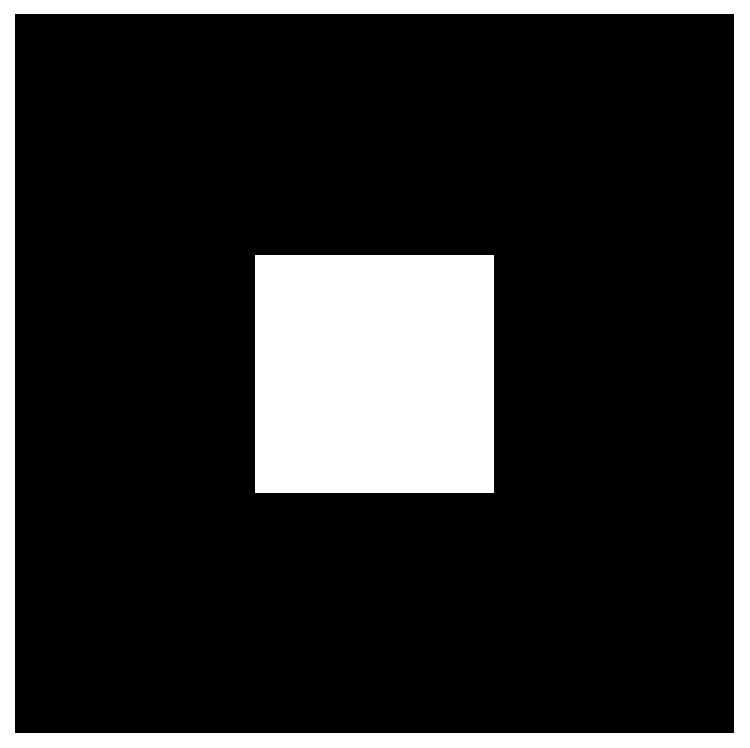}\hspace{0.4cm}
            \includegraphics[width=0.28\linewidth]{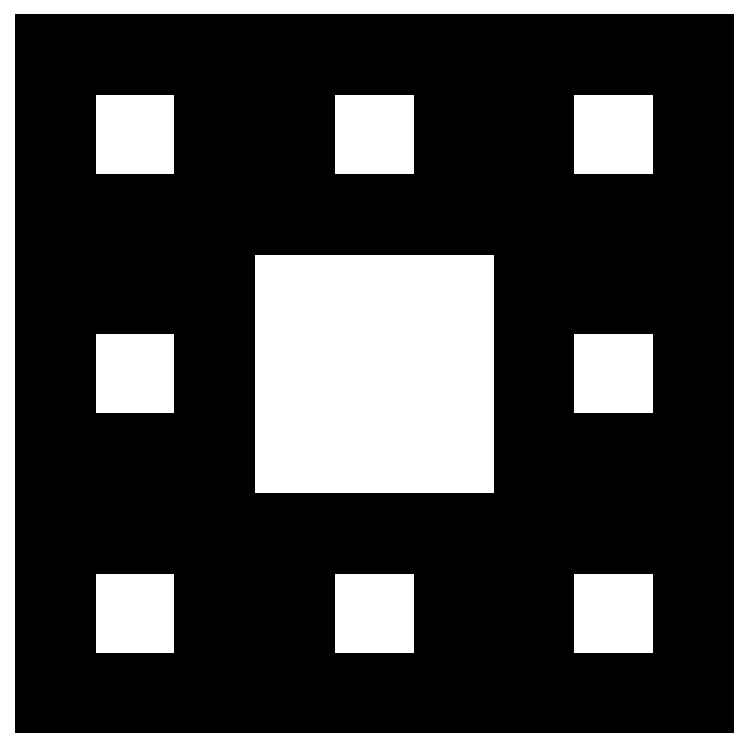}
                \caption{The pre-Sierpi\'nski carpets $F_0$, $F_1$ and $F_2$.}
                        \label{Fig1}
\end{figure}
                
\begin{figure}
        \centering
        \includegraphics[width=0.28\linewidth]{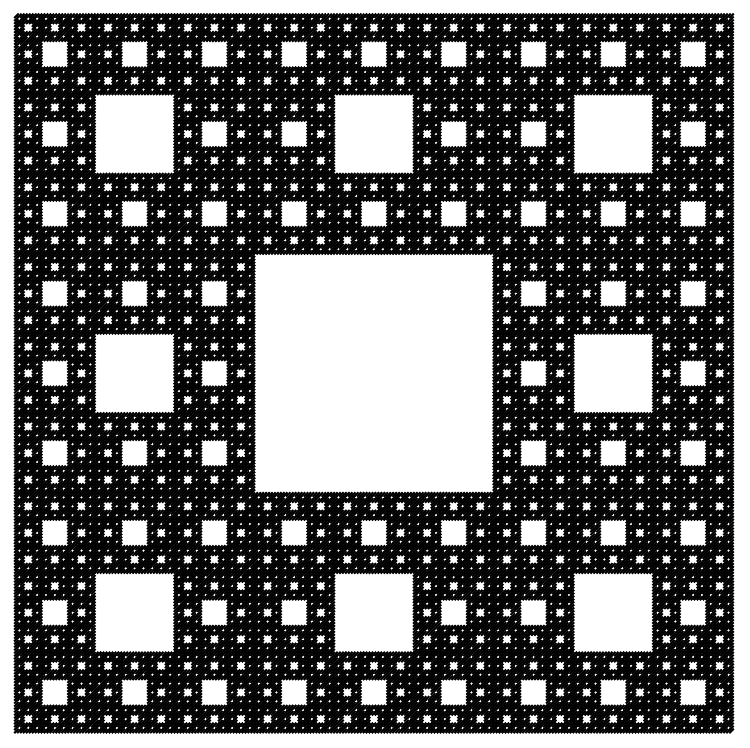}
        \caption{The Sierpi\'nski carpet $F$.}
        \label{Fig2}
    \end{figure}

    This successful construction immediately raises a fundamental question concerning uniqueness and universality: Is the Brownian motion obtained via this weak-limit approach the only natural diffusion on the carpet $F$? More precisely, do other non-standard diffusion processes, when suitably rescaled, converge to the same limit? 

In 1997,  Barlow, Hattori, Hattori and Watanabe \cite{BHHW} made  significant progress by studying a family of anisotropic diffusions on the pre-carpets  $F_n$ in $\R^2$.
 They considered the  divergence forms $\big(\mcE^{(r)}_{F_n},W^{1,2}(F_n)\big)$ defined by
\begin{equation}\label{e:1.1}
\mcE^{(r)}_{F_n}(f,g)=\int_{F_n}\left(\Big(\frac{\partial f}{\partial x}\cdot\frac{\partial g}{\partial x}\Big)(x,y)+\frac1r\Big( \frac{\partial f}{\partial y}\cdot\frac{\partial g}{\partial y}\Big)(x,y)\right)dxdy \quad\hbox{ for }f,g\in W^{1,2}(F_n),
\end{equation}
where $r\in(0,\infty)$ is an  anisotropy parameter  measuring the oscillation magnitude ratio between the horizontal and vertical directions. A key quantity in their study is the ratio $H_n(r)$ of the effective resistances $R^{(r)}_{F_n}$ in the two coordinate directions for the 
{ energy} form $\big(\mcE^{(r)}_{F_n},W^{1,2}(F_n)\big)$. Specifically,  
\begin{equation}\label{e:*}
H_n(r)=\frac{R^{(r)}_{F_n}(L_1,L_3)}{R^{(r)}_{F_n}(L_2,L_4)},
\end{equation}
where  $L_1$ and $L_3$ are the bottom and the top boundary line segment of the unit square $F_0$,
and $L_2$ and $L_4$  are the right and left boundary line segment of $F_0$.
 They established a weak homogenization theorem \cite[Theorem 1.1]{BHHW},  that is, there is constant $C>1$, independent of $r$, so that 
\[
C^{-1}\leq\liminf_{n\to\infty}H_n(r)\leq \limsup_{n\to\infty}H_n(r)\leq C.
\]
 This result hinted that, under a suitable time scaling, these anisotropic diffusions may  converge weakly to some limit  Dirichlet form on the Sierpi\'nski carpet. However, weak homogenization only asserts the  boundedness of the ratio $H_n(r)$, leaving its precise asymptotic behavior undetermined. The authors in \cite{BHHW} thus formulated a  conjecture, the \textit{Strong Homogenization Conjecture}, which states that  
\begin{equation}\label{stronghom}
\lim_{n\to\infty}H_n(r)=1\quad \hbox{ for every }r\in (0,\infty).
\end{equation}
If true, this demonstrates an intrinsic rigidity in the scaling limits of diffusions on the Sierpi\'nski carpet $F$. 
 Despite the anisotropy in the approximating processes,
the high degree of symmetry from the Sierpi\'nski carpet
 (encoded in the local and reflection symmetries) together with  
 the uniform (in $n$ and $f$) comparability of $\mcE^{(r)}_{F_n}(f,f)$ with the isotropic $\mcE^{(1)}_{F_n}(f,f)$
  collectively force any possible sub-sequential weak limit to be the isotropic standard Brownian motion. In other words, any
initial anisotropy is eventually “washed out” in the scaling limit.

The main objective of this paper is to affirm the strong  homogenization conjecture \eqref{stronghom}  in Theorem \ref{thm1}.
We further establish in Theorem \ref{thm3} that the corresponding anisotropic diffusion processes converge weakly to the Brownian motion on the
Sierpi\'{n}ski carpet constructed by Barlow and Bass \cite{BB}. In fact, we establish the weak convergence of the anisotropic diffusion processes first, 
   and then use it to prove  the strong  homogenization conjecture.

\smallskip

The core of our strategy is to address the following uniqueness question:
\textit{Are all possible sub-sequential  limit  Dirichlet forms essentially  unique?} We point out that we cannot utilize the uniqueness result from Barlow, Bass, Kumagai and Teplyaev  \cite{BBKT}
as  it is not clear a priori, due to the anisotropic nature of the Dirichlet form \eqref{e:1.1},  whether a sub-sequential limit would satisfy condition (2) of \cite[Definition 2.15]{BBKT}, 
 which requires invariance under {\it every isometry}  of $\R^2$ that maps one $n$-cell onto another. 
 Instead, we
 establish a variant uniqueness result tailored to our anisotropic diffusion model by noting that any sub-sequential limit
 Dirichlet form 
of \eqref{e:1.1} satisfies sub-Gaussian heat kernel estimates, local symmetry, and reflection symmetry (to be explicitly defined in Section \ref{sec2}). We show  in Theorem \ref{thm2} that any Dirichlet form $(\mcE,\mcF)$ with these three properties must be a constant multiple of the canonical Dirichlet form 
$(\mcE_B,\mcF)$ associated with the Brownian motion on the Sierpi\'nski carpet $F$.   
We emphasize that, in contrast to the uniqueness theorem
of \cite{BBKT}, we do not assume that the Dirichlet form $(\mcE,\mcF)$ is symmetric with respect to 
the $90$ degree counter-clockwise rotation, thereby allowing diffusions to move differently along the two coordinate directions. This uniqueness result serves as the cornerstone of our analysis. Based on it, we not only 
 obtain the strong homogenization theorem but also prove that the scaled Dirichlet form  $\big(3^{n(d_w-d_f)}\mcE^{(r)}_{F_n}, W^{1,2}(F_n)\big)$ is Mosco convergent to $(C\mcE_B,\mcF)$ as $n\to\infty$ for some constant $C>0$ depending on $r$ in Theorem \ref{thm3}.
 
 \medskip 

In this paper, we concentrate on the standard Sierpi\'nski carpet in $\R^2$.
It is natural to ask whether the strong Homogenization result and the convergence of the rescaled reflected diffusions  on pre-Sierpi\'nski carpets
extend to generalized Sierpi\'nski carpets in $\R^d$ with $d\geq 3$. While it is quite plausible, the presence of more directions  in higher dimensional spaces
makes our flow-gluing techniques developed in Section \ref{sec4.3} much more difficult to carry out. We leave this question to future research.

\medskip 

 In this paper, we use := as a way of definition. For $a, b\in \R$, $a\vee b:=\max\{a, b\}$ and $a \wedge b:=\min\{a, b\}$.
The rest of the paper is organized as follows.  

\medskip 

In Section 2, we introduce notation and the main theorems of this paper. In particular, in Section 2.1, we define the local symmetry and reflection symmetry of Dirichlet forms on the Sierpi\'nski carpet $F$. In Section 3, we introduce the restriction of energy on cells, and discuss their properties.
In Section 4, we establish upper and lower bound estimates of $\mcE/\mcE_B$ in terms of effective resistances between various pairs of subsets, where $\mcE$ is a general Dirichlet form that satisfies  local symmetry, reflection symmetry and sub-Gaussian heat kernel estimates. 
Finally, in Section 5, we use the estimates derived in Section 4 and the technique from
Barlow-Bass-Kumagai-Teplyaev \cite{BBKT} to prove the uniqueness theorem, Theorem \ref{thm2}. 
The Mosco convergence of anisotropic divergence forms (Theorem \ref{thm3}) and their strong homogenization property
 (Theorem \ref{thm1}) then follow from the uniqueness of the limiting forms, by using the techniques developed in \cite{CC}. 
   Their proofs are presented in Section 6.

\section{Main theorems}\label{sec2}

In this section, we  carefully present  the  main results of this paper.  

\medskip

We begin by defining  the Sierpi\'nski carpet (SC) and the pre-Sierpi\'nski carpets (pre-SC) in $\R^2$. Let $F_0=[0,1]^2$ be the unit square, with its four corners denoted by
\[
q_1=(0,0),\quad  q_2=(1,0),\quad  q_3=(1,1)
\quad \hbox{and} \quad q_4=(0,1).
\]
 We write 
\[
L_1=\overline{q_1q_2},\quad  L_2=\overline{q_2q_3},\quad  L_3=\overline{q_3q_4},
\quad \hbox{and} \quad L_4=\overline{q_4q_1}
\]
for the four boundary segments of $F_0$. For $i\in\{1,2,3,4\}$,  let $q_{i+4}$ be the midpoint of $L_i$, namely 
\[
q_5=(1/2,0),\quad q_6=(1,1/2),\quad  q_7=(1/2,1), \quad \hbox{and} \quad q_8=(0,1/2). 
\]
Consider the  iterated function system $\{\Psi_i\}_{1\leq i\leq 8}$ where each contraction $\Psi_i$ is given by
\[
\Psi_i(z)=\frac13z+\frac23q_i \quad \hbox{ for }i=1,2,\cdots,8,
\]
so that $q_i$ is the fixed point of $\Psi_i$.
For $n\geq 1$,  the \textit{level-$n$ pre-Sierpi\'nski carpet (pre-SC)} $F_n$ is defined inductively as 
\[
F_n=\bigcup_{i=1}^8\Psi_i(F_{n-1}).
\]
The \textit{Sierpi\'nski carpet (SC)}, denoted by $F$, is the limit of this nested sequence:
\[
F=\bigcap_{n=0}^\infty F_n.
\]
Throughout this paper,  $\mu_n$ is
 the normalized Lebesgue measure on $F_n$ satisfying $\mu_n(F_n)=1$, 
$\mu$ is the normalized $d_f$-dimensional Hausdorff measure on $F$ with $\mu(F)=1$,
 and $d_f:=\frac{\log8}{\log3}$ is
  the Hausdorff dimension of $F$. We  denote by $\partial_oF :=\bigcup_{i=1}^4L_i$
   the boundary of $F$. See Figures \ref{Fig1}, \ref{Fig2} for an illustration of the pre-SCs and the SC.\medskip

We now introduce further notation needed for the  main results.\medskip

\noindent(\textbf{Cells}) Let $W_0=\{\emptyset\}$ and $W_n=\{1,2,\cdots,8\}^n$ for $n\geq 1$ be the sets of \textit{words} of  length $n$. For a word $w=w_1w_2\cdots w_n\in W_n$ with $n\geq 1$, define
\[
\Psi_w=\Psi_{w_1}\circ\Psi_{w_2}\circ\cdots\circ\Psi_{w_n},
\]
and we use the convention $\Psi_\emptyset=\text{Id}$, where $\text{Id}$ denotes the identity map. From time to time, we denote by $\dot{w}^k=w\cdots w$  the concatenation of $k$ copies of $w$.

Each word $w\in W_n,n\geq 0$ corresponds to a \textit{level-$n$ cell} $F^w:=\Psi_w(F)$. We denote by $\partial_oF^w=\Psi_w(\partial_oF)$ the boundary of $F^w$. For a subset $A\subset W_n$, we write $F^A:=\cup_{w\in A}F^w$ for brevity.\medskip 

\noindent(\textbf{Folding maps}) 
Let $\bar{\varphi}: \mathbb{R} \to [0,1]$ be the periodic extension of the function $x \mapsto |x|$ on $[-1, 1]$; that is, $\bar{\varphi}(x) = |x|$ for $|x| \leq 1$, and $\bar{\varphi}(x + 2n) = \bar{\varphi}(x)$ for all $x \in \mathbb{R}$, $n \in \mathbb{Z}$.

Define the map $\varphi:\R^2\to F_0$ by  
\[
\varphi(x,y)=\big(\bar{\varphi}(x),\bar{\varphi}(y)\big).
\]
For each $w\in W_n$ with $n\geq 0$,  define  the \textit{folding map} $\varphi_w: F\to F^w$ as 
\[
\varphi_w(z)=3^{-n}\varphi\big(3^n(z-\Psi_w(q_1))\big)+\Psi_w(q_1). \smallskip 
\]

\noindent(\textbf{Reflection symmetries}) Let $\mathcal{G}_x$ and $\mathcal{G}_y$ denote the \textit{reflections} about the lines $x=\frac12$ and $y=\frac12$, respectively, defined by
\[
\mathcal{G}_x(x,y)=(1-x,y)\ \hbox{ and }\ \mathcal{G}_y(x,y)=(x,1-y).  
\]

\subsection{Strong homogenization of anisotropic diffusions on the pre-SCs}\label{sec21}

Let $r>0$ and $n\geq 0$. Consider the following symmetric strongly local Dirichlet from  on 
 the level-$n$ pre-Sierpi\'nski carpet $F_n$:
\[
\mcE^{(r)}_{F_n}(f,g)
=\int_{F_n}\left(\Big(\frac{\partial f}{\partial x}\cdot\frac{\partial g}{\partial x}\Big)(x,y)+\frac1r\Big(\frac{\partial f}{\partial y}\cdot\frac{\partial g}{\partial y}\Big)(x,y)\right)dxdy \quad\hbox{ for }f,g\in W^{1,2}(F_n).
\]
We define the \textit{effective resistance} between disjoint closed sets $A,B\subset F_n$ as 
\[
R^{(r)}_{F_n}(A,B)=\left(\inf\Big\{\mcE^{(r)}_{F_n}(f,f):\,f\in C(F_n)\cap W^{1,2}(F_n),\,f|_A=0,\,f|_B=1\Big\}\right)^{-1}. 
\]
In \cite[Theorem 1.1]{BHHW}, Barlow, Hattori, Hattori and Watanabe established the following weak homogenization result concerning the ratio 
\[
H_n(r):=\frac{R^{(r)}_{F_n}(L_1,L_3)}{R^{(r)}_{F_n}(L_2,L_4)}
\]
of the effective resistances in the two coordinate directions. 

\begin{proposition}[Theorem 1.1 of \cite{BHHW}]
 There exists a constant $C\in[1,\infty)$ such that 

\[
C^{-1}\leq\liminf_{n\to\infty}H_n(r)\leq \limsup_{n\to\infty}H_n(r)\leq C\quad\hbox{ for all }r\in(0,\infty). 
\]
\end{proposition}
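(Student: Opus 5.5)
The plan is to reproduce the Barlow--Bass style "Knight's move / corner move" comparison of resistances, made uniform in the anisotropy parameter $r$. Write $R^{(r)}_n:=R^{(r)}_{F_n}(L_1,L_3)$ (vertical) and $\wt R^{(r)}_n:=R^{(r)}_{F_n}(L_2,L_4)$ (horizontal). The first observation is the scaling symmetry: the linear change of variables $(x,y)\mapsto(y,x)$ maps $F_n$ to itself, since $F_n$ is invariant under the diagonal reflection, and it exchanges the two directions. Changing variables gives $\wt R^{(r)}_n = r^{-1}\cdot R^{(1/r)}_n$ up to the factor coming from multiplying the form by $r$. Indeed $\mcE^{(r)}(f\circ\sigma)=\frac1r\mcE^{(1/r)}$ with the roles of $x,y$ swapped. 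Consequently $H_n(r)$ can be expressed as $r^{\pm1}R^{(r)}_n/R^{(1/r)}_n$. So it suffices to show that the vertical and horizontal resistances renormalize in a comparable way.

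The key steps, in order, are as follows.

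\textbf{Step 1 (series/parallel bounds).} Using shorting (Dirichlet/Neumann bracketing) on the $8$ subcells of level one, I will prove the one-step inequalities
\[
c_1\,\frac{3}{8}\,\big(R^{(r)}_{n}\wedge \wt R^{(r)}_n\text{-type combos}\big)\le R^{(r)}_{n+1}\le \ldots
\]
More usefully, I will prove two-sided bounds of the form $a\,\rho_n\le R_{n+1}/R_n\le b\,\rho_n$ for both directions, with the same $\rho_n$ and absolute constants $a,b$. Cutting along horizontal lines gives the series upper bound: $R_{n+1}\le$ (three rows in series, each a parallel combination), using only vertical resistances of subcells. Shorting vertical lines gives the lower bound.

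\textbf{Step 2 (mixing of directions).} This is the crucial step. In the middle row the central cell is removed, so current flowing vertically must detour horizontally around the hole. I will show, via a flow-construction / Knight's-move argument (as in Barlow--Bass, or by building test functions and test flows on the $3\times3$ block with the hole), that the level-$(n+1)$ vertical resistance dominates and is dominated by a fixed convex combination involving both $R_n$ and $\wt R_n$. The result should be $R_{n+1}\asymp \wt R_{n+1}\asymp \frac{3}{8}\cdot\theta(R_n,\wt R_n)$, where $\theta$ is symmetric up to absolute constants. Roughly, the detour forces $R_{n+1}\ge c\,\wt R_n\cdot(\text{scale})$ as well as $\ge c\,R_n\cdot(\text{scale})$, and the same holds with the roles swapped.

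\textbf{Step 3 (conclusion).} From Step 2 one gets $c\le \wt R_{n+1}/R_{n+1}\le C$ for all $n\ge1$, with $c,C$ absolute. Taking $\liminf$/$\limsup$ then yields the proposition.

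The main obstacle is the uniformity in $r$ in Step 2. When $r\to0$ or $r\to\infty$, the form degenerates, and naive comparisons with the isotropic form lose a factor of $r$. I would therefore avoid comparing with $\mcE^{(1)}$ at all. Instead I would work directly with the boundary-value "corner" quantities: the resistance of a cell between a side and an adjacent side segment, which measures the cost of turning a corner. I would show that this quantity is comparable, uniformly in $r$, to both $R_n$ and $\wt R_n$ once a detour exists, using only monotonicity, the reflection symmetries $\mcG_x,\mcG_y$, and gluing of harmonic functions across cells.
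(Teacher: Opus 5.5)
The paper itself does not prove this proposition; it quotes it from \cite{BHHW}. It also follows, with $C=1$, from Theorem~\ref{thm1}, whose proof via Theorems~\ref{thm2} and~\ref{thm3} does not use it. Judged on its own, your proposal has a genuine gap.

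\textbf{The central claim is false.} Your Steps 2--3 assert a \emph{one-level} comparability $R_{n+1}\asymp\wt R_{n+1}$ with absolute constants. That would mean $\sup_{r>0}H_n(r)<\infty$ for every fixed $n\ge 1$, which is false, so no argument of this shape can work. The cut/short bounds of your Step 1 give exactly $\frac76R_n\le R_{n+1}\le\frac32R_n$ and $\frac76\wt R_n\le\wt R_{n+1}\le\frac32\wt R_n$:
\begin{itemize}
\item cutting along $x=\frac13,\frac23$ leaves two full columns of resistance $3R_n$ in parallel;
\item shorting along $y=\frac13,\frac23$ leaves rows of resistance $\frac13R_n,\frac12R_n,\frac13R_n$ in series.
\end{itemize}
On the square, $R_0=r$ and $\wt R_0=1$. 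Hence $(7/9)^n r\le H_n(r)\le (9/7)^n r$, so for each fixed $n$, $H_n(r)\to\infty$ as $r\to\infty$. Already for $n=1$: testing with $f=y$ and using Cauchy--Schwarz on the two full columns gives $R_1\in[\frac98 r,\frac32 r]$, while the same argument horizontally gives $\wt R_1\in[\frac98,\frac32]$.

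The hole cannot force a detour that equalizes the two directions in one step. Vertical current passes unobstructed through the side columns, and the hole only moves $R_{n+1}/R_n$ within $[\frac76,\frac32]$. Correspondingly, your inequalities $R_{n+1}\gtrsim\wt R_n$ and $\wt R_{n+1}\gtrsim R_n$ fail as $r\to0$ and $r\to\infty$ respectively. Likewise, no single ``corner'' quantity can be comparable to both $R_n$ and $\wt R_n$ uniformly in $r$, since $R_n/\wt R_n\ge (7/9)^n r$.

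Minor slip: the change of variables gives $\wt R^{(r)}_n=r\,R^{(1/r)}_n$, not $r^{-1}R^{(1/r)}_n$. The useful consequence is $H_n(1/r)=1/H_n(r)$, which reduces everything to bounding $\limsup_n H_n(r)$.

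\textbf{What is missing.} The statement is genuinely asymptotic. $H_n(r)$ needs at least of order $\log r$ levels to enter a bounded window, and only the window is uniform in $r$. You need a quantitative negative-feedback estimate acting across levels. For instance: $H_{n+m}(r)\le\theta H_n(r)$ whenever $H_n(r)\ge K$, with $m$, $K$ and $\theta<1$ independent of $n$ and $r$. Combined with $\frac79\le H_{n+1}/H_n\le\frac97$, this gives $\limsup_n H_n(r)\le (9/7)^m K$, and the symmetry $H_n(1/r)=1/H_n(r)$ gives the $\liminf$.

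The heuristic for such an estimate is as follows. When vertical resistance dominates, horizontal redistribution along $y=\frac13,\frac23$ is cheap, which pushes $R_{n+1}/R_n$ toward the shorted value $\frac76$. Meanwhile horizontal current must detour vertically around the hole, which pushes $\wt R_{n+1}/\wt R_n$ toward the cut value $\frac32$. Both limits are attained as $r\to\infty$ with $n$ fixed.

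Turning ``nearly shorted / nearly cut'' into inequalities uniform in $n$ and $r$ requires controlling how a level-$n$ cell transmits current between portions of its boundary. Boundary-energy estimates such as \cite[Lemma 2.6]{BHHW}, adapted here as Lemma~\ref{lemma52}, are of this kind. This is the substantive part of any direct proof, and your proposal leaves it as an unproved hope.
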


A ``strong homogenization'' result was conjectured in \cite[Conjecture, Page 3]{BHHW}. The main goal of this paper is to affirm this conjecture,
and  to further 
show that the reflected diffusion on $F_n$ associated with  $(\mcE^{(r)}_{F_n}, W^{1,2}(F_n))$ on $L^2(F_n; \mu_n)$, after suitable scaling,
converges weakly to a Brownian motion on the Sierpi\'nski carpet $F$. The following is the first main result of this paper.

\begin{theorem}[Strong homogenization]\label{thm1}
\[
\lim_{n\to\infty} H_n(r)=1\quad\hbox{ for all }r\in(0,\infty). 
\]
\end{theorem}

\subsection{Uniqueness of a class of locally symmetric diffusions on the SC}\label{sec22} We will
  prove Theorem \ref{thm1} by investigating the uniqueness of all possible scaling limits of the forms $\big(\mcE^{(r)}_{F_n},W^{1,2}(F_n)\big)$. See Section \ref{sec23} for the definition of the Mosco convergence.

As we will see, all possible limiting forms  $(\mcE,\mcF)$ on $L^2(F;\mu)$ enjoy the following good properties.\medskip

\noindent\big(${\bf HK}(d_w)$\big) 
We say a Dirichlet form $(\mcE,\mcF)$ satisfies \textit{sub-Gaussian heat kernel estimates}, denoted by ${\bf HK}(d_w)$, if the heat semigroup associated with $(\mcE,\mcF)$ on $L^2(F;\mu)$ admits a transition density $p_t(z_1,z_2)$ satisfying
\begin{align*} 
C_1\frac{1} {t^{d_f/d_w} } \exp&\left(-C_2\Big(\frac{|z_1-z_2|^{d_w}}{t}\Big)^{\frac{1}{d_w-1}}\right)\leq p_t(z_1,z_2)\\&\leq C_3\frac{1}{t^{d_f/d_w} } 
\exp\left(-C_4\Big(\frac{|z_1-z_2|^{d_w}}{t}\Big)^{\frac{1}{d_w-1}}\right)
\end{align*} 
for all $t>0$ and $z_1,z_2\in F$, where $C_1$--$C_4$ are positive constants. 
Here  $d_w \geq 2$ is a positive parameter called the \textit{walk dimension}. 

\smallskip

It is well-known that the Brownian motion on $F$ constructed by Barlow and Bass has ${\bf HK}(d_w)$ property, see \cite{BB2}.

\begin{remark} \rm 
On a general metric measure space, if a strongly local regular Dirichlet form satisfies ${\bf HK}(d_w)$, then $d_w\geq 2$, and  $d_w$ is uniquely characterized as the critical exponent of a class of Besov spaces $W^{\beta/2,2}$, $\beta>0$. See \cite{GHL} for a proof. 
\end{remark}

\begin{remark}\label{remark24} \rm 
According to \cite[Theorem 4.2]{GHL}, if a Dirichlet form $(\mathcal{E},\mathcal{F})$ on $L^2(F;\mu)$ satisfies $\mathbf{HK}(d_w)$, then its domain $\mathcal{F}$ coincides with a Besov-type space on $(F,|\cdot|,\mu)$ and $\mathcal{E}(f,f)$ is comparable to  the corresponding Besov norm. Therefore, the notation $\mathcal{F}$ could be consistently used for such forms.
The Dirichlet form $(\mathcal{E}_B, \mathcal{F})$,  defined as the Mosco limit of the rescaled standard divergence forms in \eqref{e:0}, is associated with Brownian motion on $F$ and satisfies $\mathbf{HK}(d_w)$ (see \cite{BB2,BB3}).
\end{remark}

\begin{remark}\label{remark25}  \rm
The Sierpi\'nski carpet $F$ has Hausdorff dimension $d_f=\frac{\log8}{\log 3}$. Moreover, $3/2\geq 3^{d_w-d_f}\geq 7/6$ by \cite[Remark 5.4]{BB3}. Hence 
\[
d_w\geq \frac{\log(7/6)}{\log3}+d_f=\frac{\log(7/6)}{\log3}+\frac{\log8}{\log3}=\frac{\log(28/3)}{\log3}>2>d_f.
\]	
As a consequence, $\mcF\subset C(F)$ by ${\bf HK}(d_w)$ and the Sobolev embedding theorem (see \cite[Theorem 4.2]{HZ}). 
 
For any generalized Sierpi\'nski carpet, it is shown in \cite{Ka} that $d_w>2$.   
However, the exact value of $d_w$ remains unknown even for the classical Sierpi\'nski carpet $F$ (see \cite{BB4}). 
\end{remark}

We consider the following two types of symmetric properties of $(\mcE,\mcF)$.\medskip

\noindent(\textbf{Local symmetry})  A Dirichlet form $(\mcE,\mcF)$ on $L^2( F; \mu)$
is said to be \textit{locally symmetric} if for every $n\geq 0$ and every $f\in \mcF$, we have
\[
f\circ \varphi_w\in \mcF\quad \hbox{ for all }w\in W_n,
\]
and 
\[
\mathcal{E}(f, f)=8^{-n}\sum_{w\in W_n}\mathcal{E}\left(f\circ\varphi_w, f\circ\varphi_w\right). 
\]

\begin{remark} \rm 
The function $f\circ\varphi_w$ is constructed by ``unfolding''  copies of the restriction $f|_{F^w}$ across $F$. 
In \cite{BBKT}, the map $f\to f\circ\varphi_w$ is denoted by $U_SR_Sf$, where $R_S$ is the restriction operator and $U_S$ is the unfolding operator. 
The {\rm(\textbf{Local symmetry})} condition is exactly \cite[Definition 2.15 (1) and (3)]{BBKT}.  
\end{remark}

\smallskip

\noindent(\textbf{Reflection symmetry})   A Dirichlet form $(\mcE,\mcF)$ on $L^2(F; \mu)$
 is  said to be \textit{reflection symmetric} if for every $f\in\mcF$, we have $f\circ\mcG_x$, $f\circ\mcG_y\in\mcF$, and
\[
\mcE(f,f)=\mcE(f\circ\mcG_x,f\circ\mcG_x)=\mcE(f\circ\mcG_y,f\circ\mcG_y). 
\]
The {\rm (\textbf{Reflection symmetry})} is a special case of \cite[Definition 2.15 (2)]{BBKT}.

\smallskip

{The following is the second main result of this paper.}

\begin{theorem}\label{thm2}
There is a unique (up to a constant multiplier) symmetric strongly local regular Dirichlet form  $(\mcE,\mcF)$ on $L^2(F;\mu)$ that satisfies ${\bf HK}(d_w)$, local symmetry and reflection symmetry. 
\end{theorem}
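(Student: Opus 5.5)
Existence is immediate: the Barlow–Bass form $(\mcE_B,\mcF)$ satisfies ${\bf HK}(d_w)$ (Remark \ref{remark24}). It is invariant under the full symmetry group of the square, so in particular it is reflection symmetric. It is locally symmetric by \cite[Definition 2.15]{BBKT}. The content of the theorem is therefore uniqueness. My plan is to follow the Barlow–Bass–Kumagai–Teplyaev strategy, replacing the use of rotation invariance by two-directional resistance estimates. Fix $(\mcE,\mcF)$ with the three properties. The first goal is to show that there are constants $0<c\le C<\infty$ with
\[
c\,\mcE_B(f,f)\le \mcE(f,f)\le C\,\mcE_B(f,f)\quad\hbox{for all }f\in\mcF .
\]
Both forms are comparable to the same Besov norm by \cite[Theorem 4.2]{GHL}, so some comparison constants exist. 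The real task is to control the \emph{optimal} constants
\[
\lambda^+:=\sup_{f}\mcE(f,f)/\mcE_B(f,f),\qquad \lambda^-:=\inf_f \mcE(f,f)/\mcE_B(f,f).
\]
We want to show $\lambda^+=\lambda^-$.

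Next, following \cite{BBKT}, I would consider $\mcE-\lambda^-\mcE_B$. It is a nonnegative symmetric bilinear form on $\mcF$. It inherits local symmetry and reflection symmetry, because both properties are linear in the form. If $\lambda^+>\lambda^-$, then for small $\delta>0$ the form $\mcE_\delta:=\mcE-(\lambda^--\delta)\mcE_B$ is comparable to $\mcE_B$. Hence it is again a regular strongly local Dirichlet form: the Markov property passes to the difference via the Beurling–Deny/Lévy-kernel-free representation for strongly local forms with energy measures, exactly as in \cite[Section 5]{BBKT}. It also satisfies ${\bf HK}(d_w)$ by stability of heat kernel estimates under comparable forms. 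The key quantitative input is an "elliptic Harnack-type" ratio estimate. For any form $\mcE'$ in our class, the ratio $\mcE'/\mcE_B$ must be bounded above and below by constants depending only on the ratios of effective resistances
\[
R_{\mcE'}(L_1,L_3)/R_{\mcE_B}(L_1,L_3)\quad\hbox{and}\quad R_{\mcE'}(L_2,L_4)/R_{\mcE_B}(L_2,L_4),
\]
together with a universal constant $K$ independent of $\mcE'$. Applied to $\mcE_\delta$, such an estimate contradicts the choice of $\lambda^-$ as $\delta\to0$. The resistances of $\mcE_\delta$ are computed from those of $\mcE$ and $\mcE_B$, and an infimum of ratio close to $0$ is incompatible with resistances of size $\asymp \delta^{-1}$ scale only in a bounded way. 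This yields $\mcE=\lambda^-\mcE_B$.

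So the core is the resistance-ratio estimate (the content of Section 4). I would prove it in three steps.
\begin{itemize}
\item[(i)] Restrict energy to cells. By local symmetry, $\mcE(f,f)=\sum_{w\in W_n}\mcE_{F^w}(f)$, where $\mcE_{F^w}(f):=8^{-n}\mcE(f\circ\varphi_w,f\circ\varphi_w)$. This gives each cell a self-similar energy with the correct scaling $3^{n(d_w-d_f)}$.
\item[(ii)] Lower bound. Take an $\mcE_B$-harmonic-type test function, or any $f$. On each cell, decompose $f$ into horizontal and vertical "flows". Compare cellwise via the crossing resistances in the two directions. Use reflection symmetry (through $\mcG_x,\mcG_y$ and the folding maps) to show that the optimal potentials for $L_1\leftrightarrow L_3$ and $L_2\leftrightarrow L_4$ have the symmetric structure needed to be glued across neighbouring cells.
\item[(iii)] Upper bound. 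Use the dual (flow/Thomson) formulation: glue optimal unit flows in the two directions across cells to build competitors for arbitrary boundary data at a fixed level. Then pass to the limit using the Besov characterization and the continuity $\mcF\subset C(F)$ from Remark \ref{remark25}.
\end{itemize}

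I expect the main obstacle to be step (iii), the flow-gluing. Without $90^\circ$ rotation invariance, horizontal and vertical flows have different costs. A potential with data on a cell boundary that is neither purely horizontal nor vertical must be realized by combining the two. The difficulty is making the combined competitor's energy comparable to $\mcE_B$ with constants depending only on the two directional resistances. I would first try to handle corner cells by superposing a horizontal and a vertical unit flow, each reflected into place by $\mcG_x,\mcG_y$, and to bound the cross term by Cauchy–Schwarz.
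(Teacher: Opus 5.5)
Your overall frame matches the paper: existence via $\mcE_B$, then the perturbation $\mcE-(\lambda^--\delta)\mcE_B$ from \cite{BBKT}. Gluing reflected copies of the directional potentials is also essentially Proposition \ref{prop41}-(a). The gap is in your ``key quantitative input''.

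What the perturbation argument needs is a bound $\sup(\mcE'|\mcE_B)\le K\,\inf(\mcE'|\mcE_B)$ with $K$ independent of $\mcE'$; this is \eqref{eqn51}. Applied to $\mcE_\delta$, whose ratio is $(\lambda^+-\lambda^-+\delta)/\delta$, it gives the contradiction. Let $R_x,R_y$ be the level-$0$ crossing resistances of $\mcE'$. Testing with the optimal potentials of $\mcE'$ and of $\mcE_B$ already gives $\sup\ge c/(R_x\wedge R_y)$ and $\inf\le c/(R_x\vee R_y)$, hence $\sup/\inf\ge (R_x\vee R_y)/(R_x\wedge R_y)$. So a uniform bound on $\sup/\inf$ already contains the statement that the form is isotropic, and phrasing the bounds through the two directional resistances just relocates the problem. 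Your closing sentence does not repair this, because the resistances of $\mcE_\delta$ are not determined by those of $\mcE$ and $\mcE_B$ (the minimizers differ). Also, the forms are not assumed self-similar, so level-$0$ resistances say nothing about $\mcE'$ on oscillating functions. The paper instead uses the zoomed forms $\mcE^{(n)}$ and the asymptotic quantities $3^{n(d_f-d_w)}/R^{(n)}$ as $n\to\infty$.

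The more serious problem is the flow-gluing in your step (iii). Via Thomson's principle this is what must yield the lower bound on $\inf(\mcE|\mcE_B)$, and it cannot work with straight flows alone. On a cell, the net fluxes $(c_1,c_2,c_3,c_4)$ through $L_1,\dots,L_4$ range over the $3$-dimensional space $\sum_i c_i=0$. Horizontal and vertical crossing flows span only $\{(a,b,-a,-b)\}$. Data such as $(1,-1,0,0)$ need a flow that turns a corner, and no Cauchy--Schwarz bookkeeping with the two straight flows produces one.

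The paper builds all three bricks $u_d^{(n)},u_x^{(n)},u_y^{(n)}$ from the corner flow between $S_1$ and $S_2$. This gives $\inf(\mcE|\mcE_B)\gtrsim\limsup_n 3^{n(d_f-d_w)}/R^{(n)}(S_1,S_2)$, which involves a different resistance from the one controlling $\sup$. The missing idea is to show that turning a corner is not much more expensive than crossing straight. This is the renormalization inequality of Lemma \ref{lemma53}, $1/R^{(n)}_x\le\frac{6+2^{-k+3}}{9}\cdot\frac{1}{R^{(n+1)}_x}+\frac{4}{R^{(n)}(S_1,S_2)}$. It is proved from eight rescaled copies of the level-$(n+1)$ potential, the boundary-energy decay of Lemma \ref{lemma52}, and cut-offs by corner potentials. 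It is then combined with the strict inequality $3^{d_w-d_f}<3/2$ of Lemma \ref{lemma51}, and with scales $n_l$ realizing $\liminf_n 3^{n(d_f-d_w)}/(R^{(n)}_x\wedge R^{(n)}_y)$. Nothing in your outline rules out an anisotropic form in which corner-turning costs far more than straight crossing, so as written it does not reach \eqref{eqn51}.
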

 
 \smallskip

\begin{remark} \rm  \begin{enumerate}
\item[{\rm(a)}]  Clearly, the unique Dirichlet form corresponds precisely to the Brownian motion constructed by Barlow and Bass \cite{BB}, whose uniqueness was proven in \cite{BBKT}. 

\item[{\rm(b)}]  In contrast to \cite{BBKT}, we do not impose rotation symmetries  for $(\mcE,\mcF)$, thus allowing the form to exhibit distinct behaviors along the two coordinate directions. 
\end{enumerate}
\end{remark}

\subsection{Convergence of anisotropic diffusions on the pre-SCs}\label{sec23} Using the uniqueness theorem, Theorem \ref{thm2},  we will see that the anisotropic diffusions on the pre-SCs converge weakly to the Brownian motion on the SC. This convergence can also be interpreted in terms of  Mosco convergence of the associated Dirichlet forms combined with the tightness of diffusion processes. We now state these results more precisely. \smallskip

We begin by recalling the notions of  strong and weak convergence of $L^2$ functions on varying spaces. For strong convergence, we use the equivalent formulation given in \cite[Lemma 2.7]{CC}. 

\begin{definition}[Strong and weak convergences]\rm

\begin{enumerate}
\item[(a)]  A sequence $\{f_n\} \subset L^2(F_n;\mu_n)$ is said to \emph{ converge strongly  in $L^2$} to $f\in L^2(F;\mu)$ if there exists a sequence  $\{g_m\}\subset C(F_0)$,  such that 
\[
\lim_{m\to\infty}\big\|f-g_m|_F\big\|_{L^2(F;\mu)}=0
\] 
and 
\[
\lim_{m\to\infty}\limsup_{n\to\infty}\big\|f_n-g_m|_{F_n}\big\|_{L^2(F_n;\mu_n)}=0.
\]

\item[(b)]  A sequence $\{f_n\} {\subset} L^2(F_n;\mu_n)$ is said to \emph{converge weakly  in $L^2$} to $f\in L^2(F;\mu)$ if 
\[
\lim_{n\to\infty}\langle f_n,g_n\rangle_{L^2(F_n;\mu_n)}=\langle f,g\rangle_{L^2(F;\mu)}
\]
for every sequence $\{g_n\}$ with $g_n\in L^2(F_n; \mu_n)$ that converges strongly to $g\in L^2(F; \mu)$ as $n\to \infty$.
\end{enumerate}
\end{definition}

\begin{definition}[Mosco convergence]\label{def28} \rm 
	Let $\bar{\R}=\R\cup\{\pm\infty\}$ denote the extended real numbers. Suppose that 
	 $\wt\mcE_{F_n}:L^2(F_n; \mu_n)\to \bar{\R}$ for $n\geq 0$,  and $\mcE:L^2(F; \mu)\to \bar{\R}$ are given functionals. 
	 	We say $\wt\mcE_{F_n}$  \emph{is Mosco convergent} to $\mcE$ if and only if the following hold:
	
	\begin{enumerate}
\item[\rm (M1)]   For any sequence $\{f_n\}$ with $f_n\in L^2(F_n; \mu_n)$ converging weakly in $L^2$ to $f\in L^2(F; \mu)$ as $n\to \infty$, we have 
\[
\liminf_{n\to\infty} \wt\mcE_{F_n}(f_n)\geq\mcE(f).
\]
	
\item[\rm (M2)]  For every $f\in L^2(F; \mu)$, there exists a sequence $\{f_n\}$ with $f_n\in L^2(F_n; \mu_n)$  converging strongly in $L^2$ to $f$ as $n\to \infty$ such that 
\[
\limsup_{n\to\infty} \wt\mcE_{F_n}(f_n)\leq \mcE(f).
\]
\end{enumerate} 
\end{definition}

It is well-known that there is a one-to-one correspondence between non-negative lower-semicontinuous quadratic forms (taking values in $\bar{\mathbb{R}}$) on a Hilbert space $H$ and closed symmetric non-negative definite bilinear forms (see \cite[Section 1 (b)]{Mosco}) described as follows. Given a non-negative lower-semicontinuous quadratic form $\mcE:H  \to \bar{\R}$,  define the domain $\hbox{Dom} (\mcE)=\{f\in H:\mcE(f)< + \infty\}$ and a  symmetric bilinear form on $\hbox{Dom}(\mcE)$ via the polarization identity: 
$$\mcE(f,g):=\frac{1}{4}\Big(\mcE(f+g)-\mcE(f-g)\Big) \quad \hbox{for }  f,g\in \hbox{Dom} (\mcE).
$$
Then $\big(\mathcal{E}, \hbox{Dom}(\mcE)\big)$ is a closed symmetric non-negative definite bilinear form. 
 Conversely, given such a bilinear form $\big(\mcE,\hbox{Dom}(\mcE)\big)$, the associated quadratic form is defined by 
 $$
 \mcE(f):= \begin{cases} \mcE(f,f) \quad &\hbox{if } f\in \hbox{Dom}(\mcE), \cr
  + \infty   &\hbox{if  } f\in H\setminus \hbox{Dom}(\mcE). 
   \end{cases}
   $$
   In what follows, we keep the convention that $\mcE(f):=\mcE(f,f)$ for any symmetric  bilinear form $\big(\mcE,\hbox{Dom}(\mcE)\big)$.

Recall that $(\mcE_B,\mcF)$ is the Dirichlet form of the Brownian motion on $F$ that is the Mosco limit, as $n\to\infty$, of the rescaled standard divergence forms in \eqref{e:0} (by \cite[Theorem 1.3]{CC}).
 
 \medskip
 
 {The following is the third main result of this paper.}
 
\begin{theorem}\label{thm3}
The scaled Dirichlet form $\big(3^{n(d_w-d_f)}\mcE^{(r)}_{F_n},W^{1,2}(F_n)\big)$ is Mosco convergent to $(C\mcE_B,\mcF)$ as $n\to\infty$ for some constant $C>0$ depending on $r$. Let $(X_{n,t})_{t\geq0}$ be the Hunt process associated with $\big(3^{n(d_w-d_f)}\mcE^{(r)}_{F_n},W^{1,2}(F_n)\big)$, and let $X_t$ be the Brownian motion on $F$  associated with $(\mcE_B,\mcF)$. Then, for each $x\in F$ and $x_n\in F_n,\,n\geq0$ such
that $x_n\to x$ as $n \to \infty$, the law of $(X_{n,t})_{t\geq 0}$ starting from $x_n\in F_n$ converges weakly to the law of $(X_{Ct})_{t\geq0}$ starting from $x\in F$ in the space $C([0,\infty);F)$ equipped with the local uniform topology.
\end{theorem}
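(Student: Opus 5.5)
We prove Theorem \ref{thm3} by the standard compactness-plus-uniqueness scheme: extract subsequential Mosco limits, identify every such limit as $C\mcE_B$ with the same constant $C$ via Theorem \ref{thm2}, and then upgrade form convergence to weak convergence of the processes.

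\emph{Step 1 (comparison with the isotropic forms).} The elementary inequality
\[
(1\wedge r^{-1})\,\mcE^{(1)}_{F_n}(f,f)\le \mcE^{(r)}_{F_n}(f,f)\le (1\vee r^{-1})\,\mcE^{(1)}_{F_n}(f,f)
\]
holds uniformly in $n$ and $f$. Hence all uniform estimates used in \cite{CC} for $3^{n(d_w-d_f)}\mcE^{(1)}_{F_n}$ transfer to $3^{n(d_w-d_f)}\mcE^{(r)}_{F_n}$ with constants depending only on $r$. These include the resistance estimates, the uniform Poincaré and capacity bounds, equicontinuity of harmonic functions, and the uniform heat kernel bounds. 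By the general compactness of Mosco convergence on varying spaces (as used in \cite{CC}), every subsequence has a further subsequence along which $3^{n(d_w-d_f)}\mcE^{(r)}_{F_n}$ Mosco converges to a closed form $(\mcE,\mathrm{Dom}(\mcE))$ on $L^2(F;\mu)$. By the comparison and \cite[Theorem 1.3]{CC}, this limit satisfies $c\,\mcE_B\le \mcE\le c'\mcE_B$. So $\mathrm{Dom}(\mcE)=\mcF$, and $\mcE$ is a strongly local regular Dirichlet form satisfying ${\bf HK}(d_w)$, because ${\bf HK}(d_w)$ is stable under comparability of forms on this space (via the resistance and volume characterizations).

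\emph{Step 2 (symmetries of the limit).} On $F_n$, the reflections $\mcG_x,\mcG_y$ map $F_n$ to itself and preserve $\mcE^{(r)}_{F_n}$, since they only flip the sign of one partial derivative. Likewise, for $w\in W_m$ with $m\le n$, the folding map $\varphi_w$ maps $F_n$ onto $F_n\cap \Psi_w(F_0)$ piecewise isometrically. Every piece is a composition of translations and the reflections $x\mapsto -x$, $y\mapsto -y$, with no rotations, so $\mcE^{(r)}$ is preserved. Self-similarity gives $\mcE^{(r)}_{F_n}(f)=8^{-m}\sum_{w\in W_m}\mcE^{(r)}_{F_n}(f\circ\varphi_w)$, where the energy of $f\circ\varphi_w$ is $8^m$ times the energy of $f$ on the cell. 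Passing to the Mosco limit gives both symmetries for $\mcE$. For local symmetry, take a recovery sequence $f_n$ for $f$. Then $f_n\circ\varphi_w$ converges strongly to $f\circ\varphi_w$, and (M1) yields $\mcE(f\circ\varphi_w)\le\liminf$ of the cell energies. Summing gives $8^{-m}\sum_w\mcE(f\circ\varphi_w)\le\mcE(f)$. For the reverse inequality, glue recovery sequences of the $f\circ\varphi_w$, restricted to cells, into a sequence approximating $f$. This gluing is delicate at cell boundaries, and I expect it to be the main obstacle. I would handle it with the cutoff and boundary-matching estimates of \cite{CC}, which have uniformly small energy cost by Step 1. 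By Theorem \ref{thm2}, every subsequential limit is then $C'\mcE_B$ for some $C'\in[c,c']$.

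\emph{Step 3 (the constant is independent of the subsequence).} The limiting constant must be pinned down. Mosco convergence with the uniform estimates gives convergence of effective resistances between $L_2$ and $L_4$, as in \cite{CC}. Therefore
\[
3^{-n(d_w-d_f)}R^{(r)}_{F_n}(L_2,L_4)\to R_{\mcE}(L_2,L_4)=\frac{R_{\mcE_B}(L_2,L_4)}{C'}.
\]
Since the limit of the left side is determined along the full sequence only if it exists, this alone does not remove the subsequence dependence. The missing ingredient is \cite[Theorem 1.1]{CC}, which together with sub-multiplicativity of resistances along $n$ (from Step 1 and self-similarity, as in \cite{BB,BBKT}) gives that $3^{-n(d_w-d_f)}R^{(r)}_{F_n}(L_2,L_4)$ converges. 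Then $C'=C$ is the same for all subsequences, and the full sequence Mosco converges to $C\mcE_B$.

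\emph{Step 4 (convergence of processes).} Mosco convergence gives convergence of the semigroups and resolvents in the varying-$L^2$ sense. The uniform ${\bf HK}(d_w)$-type estimates from Step 1 give equicontinuity of the heat kernels and tightness in $C([0,\infty);F_0)$. Following the argument of \cite{CC}, the finite-dimensional distributions from $x_n\to x$ then converge to those of $X_{Ct}$, which yields the asserted weak convergence. Theorem \ref{thm1} follows by applying the same resistance convergence to $L_1,L_3$ and using reflection symmetry and ${\bf HK}$ of $C\mcE_B$, which is rotation invariant, so that $R_{\mcE_B}(L_1,L_3)=R_{\mcE_B}(L_2,L_4)$.
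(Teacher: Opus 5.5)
Your Steps 1, 2 and 4 follow the same compactness-plus-uniqueness scheme as the paper. The genuine gap is in Step 3, which is the step meant to make the constant independent of the subsequence.

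\cite[Theorem 1.1]{CC} concerns only the isotropic forms. The comparison $(1\wedge r^{-1})\mcE^{(1)}_{F_n}\le\mcE^{(r)}_{F_n}\le(1\vee r^{-1})\mcE^{(1)}_{F_n}$ transfers bounds, not limits. So convergence of $3^{-n(d_w-d_f)}R^{(1)}_{F_n}(L_2,L_4)$ says nothing about convergence of $R^{(r)}_{F_n}(L_2,L_4)/R^{(1)}_{F_n}(L_2,L_4)$, and that convergence is essentially what you are trying to prove. Sub- and super-multiplicativity of resistances hold only up to a multiplicative constant. Via Fekete they give $R^{(r)}_{F_n}(L_2,L_4)\asymp 3^{n(d_w-d_f)}$ and nothing more. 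They cannot exclude a slow oscillation of $3^{-n(d_w-d_f)}R^{(r)}_{F_n}(L_2,L_4)$ between two positive values; this is exactly why convergence of rescaled resistances remained open until \cite{CC}. Even if you showed, via self-similarity, that shifting a convergent subsequence by one level preserves the limiting constant, you would only get that consecutive ratios tend to $1$, which does not force convergence. As written, your argument shows that every subsequential Mosco limit is $C'\mcE_B$ with $C'$ in a fixed compact subinterval of $(0,\infty)$. It does not show that $C'$ is the same for all subsequences, so it does not give Mosco convergence of the full sequence with the fixed factor $3^{n(d_w-d_f)}$.

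The paper separates this into two steps. First it normalizes by $\alpha_n=\mathbb{E}^{(n)}_{q_1}[\sigma_{L_2\cup L_3}]$, which is bounded above and below by Lemma~\ref{lemma61}(b). Every subsequential limit of $X^{(n)}_{\alpha_n t}$ must then be $X_{C_3t}$, with $C_3$ fixed by requiring the expected hitting time of $L_2\cup L_3$ from $q_1$ to equal $1$ (as in \cite[Remark 5.4]{BBKT} and \cite[Theorems 3.10 and 3.12]{CC}). Hence the normalized sequence converges along the full sequence. Your Step 3 observation that the limiting resistance determines $C'$ amounts to normalizing by $R^{(r)}_{F_n}(L_2,L_4)$, which would serve the same purpose. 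Second, the paper shows that $\lim_n\alpha_n$ exists by redoing the proof of \cite[Theorem 1.3]{CC} for the anisotropic forms, with Theorem~\ref{thm2} supplying the uniqueness input. This is the ingredient your proposal is missing, and it is not a corollary of the isotropic result.

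A secondary point: you leave the reverse inequality for local symmetry in Step 2 unproved. The paper does not glue recovery sequences across cell boundaries. It first constructs the subsequential limit as a weak limit of processes, using the uniform Harnack and exit-time estimates of Lemma~\ref{lemma61}, and then obtains local symmetry by the argument of \cite[Theorem 3.1]{BBKT}.
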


In the rest of this paper, we fix a   symmetric strongly local  regular Dirichlet form $(\mcE,\mcF)$ on $ L^2(F; \mu)$ that satisfies ${\bf HK}(d_w)$, local symmetry and reflection symmetry. Note that from Remark \ref{remark25}, we have $\mcF\subset C(F)$.

\section{Restriction of $(\mcE,\mcF)$ on cells}
For each $w\in W_n$ with $n\geq 0$, we define  $(\mcE^w,\mcF^w)$  on $L^2(F^w;\mu|_{F^w})$
 by 
\begin{align*}
\mcF^w&:=\big\{f\in L^2(F^w;\mu|_{F^w}):\,f\circ\varphi_w\in\mcF\big\},\\
\mcE^w(f,g)&:=8^{-n}\mcE(f\circ\varphi_w,g\circ\varphi_w)\quad\hbox{ for }f,g\in \mcF^w.
\end{align*} 
Recall that by \cite[Proposition 2.20]{BBKT}, $(\mcE^w,\mcF^w)$ is a strongly local, regular Dirichlet form on $L^2(F^w;\mu|_{F^w})$.  
In this short section, we summarize some basic properties of $(\mcE^w,\mcF^w)$. In particular, a suitable dilation of $(\mcE^w,\mcF^w)$ yields a strongly local, regular Dirichlet form on $L^2(F;\mu)$ that also satisfies ${\bf HK}(d_w)$, local symmetry and reflection symmetry. 

\begin{remark}\rm 
For simplicity, for a subset $A\subset W_n,n\geq 1$, we also write 
\begin{align*}
\mcF^A&=\big\{f\in L^2(F^A;\mu|_{F^A}):\,f|_{F^w}\in\mcF^w\hbox{ for }w\in A\big\},\\
\mcE^A(f,g)&=\sum_{w\in A}\mcE^w(f|_{F^w}
,g|_{F^w})\quad\hbox{ for }f,g\in \mcF^A.
\end{align*} 
Note that here we do not have $\mcF^A\subset C(F^A)$. In particular, $\mcF\subsetneq\mcF^{W_n}$.
\end{remark}

\medskip 

\noindent(\textbf{\emph{Energy measures}}). Let $(\wt{\mcE},\wt{\mcF})$ be a strongly local,  symmetric regular Dirichlet form on a locally compact, separable metric measure space $(\mathcal{X},d,m)$. For $f\in L^\infty(\mathcal{X};m)\cap \wt{\mcF}$, we define the \textit{energy measure} $\wt{\mu}_{\< f\>}$ as the unique Radon measure on $\mathcal X$ satisfying 
\begin{equation}\label{e:3.1}
\int_\mathcal X g(z)\wt{\mu}_{\< f\>}(dz)=\wt{\mcE}(f,fg)-\frac12\wt{\mcE}(f^2,g)\quad  \hbox{ for all }g\in C_c(\mathcal{X})\cap \wt{\mcF}.
\end{equation}
For general $f\in\wt{\mcF}$, we define $\wt{\mu}_{\< f\>}=\lim\limits_{n\to\infty}\wt{\mu}_{\< f_n\>}$, where $f_n=(f\wedge n)\vee (-n)$; this limit is known to exist (see \cite{CF, FOT}). Note that our convention differs from that in \cite{FOT} by a constant factor of $\frac{1}{2}$,
so that $\wt{\mu}_{\< f\>}(\mathcal X)=\wt{\mcE}(f)$ for $f\in \wt{\mcF}$.\medskip 

 In the rest of this paper, we write $\mu_{\<\cdot\>}$ for the energy measure associated with 
 the strongly local,  symmetric regular Dirichlet form $(\mcE,\mcF)$ on $ L^2(F; \mu)$,
   and write $\mu_{B,\<\cdot\>}$ for the energy measure associated with $(\mcE_B,\mcF)$.

\begin{lemma}\label{lemma31}
Let $n\geq 0$, $w\in W_n$ and $f\in\mcF$.
\begin{enumerate}
	\item[(a).] $\mu_{\<f\>}(\partial_oF^w)=0$.  
	\item[(b).] $\mu_{\<f\>}(F^w)=\mcE^w(f|_{F^w})$. 
\end{enumerate}
\end{lemma}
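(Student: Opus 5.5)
We use local symmetry together with the behaviour of energy measures under the folding maps. The starting point is the following. For $g\in\mcF$ and $w\in W_n$, the map $\varphi_w$ is piecewise an isometry (a reflection composed with a translation and a $3^{-n}$-dilation) from each level-$n$ cell $F^v$ onto $F^w$. Local symmetry applied to $f\circ\varphi_w$ and to products $(f\circ\varphi_w)h$ then suggests the identity
\[
\mu_{\<f\circ\varphi_w\>}(F^v)=\mu_{\<f\circ\varphi_w\>}(F^{v'})\quad\hbox{for all }v,v'\in W_n .
\]
Rather than proving this directly, I would organise the argument as follows.

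\textbf{Step 1 (part (a)).} Since $\mcF\subset C(F)$ and $(\mcE,\mcF)$ satisfies ${\bf HK}(d_w)$ with $d_w>2$, I would show that energy measures charge no line segments of the form $\partial_oF^w$. The natural route is a capacity/volume argument. ${\bf HK}(d_w)$ gives the cutoff Sobolev and Poincaré inequalities, and hence the energy measure estimate
\[
\mu_{\<f\>}(B(z,s))\lesssim s^{d_f-d_w}\cdots
\]
This alone is awkward, so I would instead use local symmetry directly. Write $\partial_oF^w\subset\bigcup_{v\in A_m}F^v$, where $A_m\subset W_{n+m}$ consists of the level-$(n+m)$ cells meeting $\partial_oF^w$. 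Then $\#A_m\lesssim 3^m$, while $\#W_{n+m}=8^{n+m}$.

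The key claim is that for $v\in W_k$,
\[
\mu_{\<f\>}(F^v)\le 8^{-k}\mcE(f\circ\varphi_v).
\]
This is what one expects from local symmetry applied at level $k$, since $\mu_{\<f\>}$ is a sum over the cells. Summing over $v\in A_m$ then gives
\[
\mu_{\<f\>}(\partial_oF^w)\le\sum_{v\in A_m}8^{-(n+m)}\mcE(f\circ\varphi_v).
\]
This is not obviously small, because the individual terms $\mcE(f\circ\varphi_v)$ are not uniformly bounded. I would handle it by a uniform-integrability/Cauchy argument: the total $\sum_{v\in W_k}8^{-k}\mcE(f\circ\varphi_v)=\mcE(f)$ is fixed, and one uses the reflection invariance of the folded functions together with strong locality. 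An alternative for (a) is the standard fact that $\mu_{\<f\>}$ does not charge sets of zero capacity. But line segments have positive capacity here, so that route fails, and the cell-counting route is the one to pursue.

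\textbf{Step 2 (part (b)).} The identification of $\mu_{\<f\>}(F^w)$ with $8^{-n}\mcE(f\circ\varphi_w)$ would proceed as follows.

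First, by strong locality and the definition \eqref{e:3.1}, the energy measure of $f\circ\varphi_w$ restricted to the interior of each cell $F^v$ ($v\in W_n$) is the push-forward of $\mu_{\<f\>}|_{F^w}$ under the isometry $\varphi_w|_{F^v}^{-1}$. To see this, test \eqref{e:3.1} against $g$ supported in the interior of $F^v$, and transport via reflection symmetry applied to the rescaled form $(\mcE^w,\mcF^w)$.

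Second, by part (a) the boundaries carry no mass. Hence
\[
\mcE(f\circ\varphi_w)=\sum_{v\in W_n}\mu_{\<f\circ\varphi_w\>}(F^v)=8^n\mu_{\<f\>}(F^w),
\]
which is (b).

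Before this step, I would verify that $\mu_{\<f\circ\varphi_w\>}|_{F^w}=\mu_{\<f\>}|_{F^w}$. Both functions agree on $F^w$, and strong locality of the energy measure gives $\mathbf 1_{F^w}\,\mu_{\<f-g\>}=0$ when $f=g$ on $F^w$, so the two measures coincide there.

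\textbf{Main obstacle.} Part (a), the vanishing of energy on cell boundaries, is the crux. I would try to prove it by exploiting local symmetry to show that $\mu_{\<f\>}(F^v)$, summed over thin layers of cells near $\partial_oF^w$, tends to zero. This uses that the layer occupies a vanishing fraction ($\sim 3^m/8^m$) of the cells, combined with the uniform control of folded energies coming from local symmetry.
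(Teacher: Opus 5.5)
\textbf{Gap in part (a).} You correctly identify (a) as the crux, but the cell-counting route does not prove it, for two reasons.

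First, the ``key claim'' $\mu_{\<f\>}(F^v)\le 8^{-k}\mcE(f\circ\varphi_v)=\mcE^v(f|_{F^v})$ for a \emph{closed} cell is not available before (a). Testing \eqref{e:3.1} against $g$ supported in $F^v$ and using polarized local symmetry identifies $\mu_{\<f\>}$ with $\mu^v_{\<f|_{F^v}\>}$ only on $F^v\setminus\partial_oF^v$. Whether $\mu_{\<f\>}$ carries extra mass on $\partial_oF^v$ is exactly what (a) asserts, so this step is circular.

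Second, even granting the claim, the bound $\mu_{\<f\>}(\partial_oF^w)\le\sum_{v\in A_m}\mu_{\<f\>}(F^v)$ gives nothing. Since $F^{A_m}$ decreases to $\partial_oF^w$, the sum is bounded below by $\mu_{\<f\>}(F^{A_m})\to\mu_{\<f\>}(\partial_oF^w)$. So showing it tends to zero is no easier than (a) itself. The fact that $\#A_m/\#W_{n+m}\to0$ does not help. Local symmetry fixes only the total $\sum_v\mcE^v(f|_{F^v})=\mcE(f)$ and gives no uniform control of the individual terms. Energy measures need not be comparable to $\mu$ (on the carpet they are in fact singular with respect to $\mu$), so energy can concentrate on a vanishing fraction of cells.

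Likewise, ``reflection invariance plus strong locality'' does not close the argument. The identities the symmetries supply, such as
$\mu_{\<f\>}\big(\bigcup_{v\in W_n}\partial_oF^v\big)=\sum_{v\in W_n}\mu^v_{\<f|_{F^v}\>}(\partial_oF^v)$,
are perfectly consistent with positive boundary mass.

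\textbf{What is missing.} You need an analytic input that uses ${\bf HK}(d_w)$ together with the geometry of the cells. The paper gets (a) from Murugan's theorem \cite[Theorem 2.9]{Murugan}: under sub-Gaussian heat kernel estimates, the boundary of a uniform domain carries no energy measure. It combines this with the fact that $F^w\setminus\partial_oF^w$ is a uniform domain \cite[Proposition 2.4]{CQ2}.

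\textbf{Part (b).} Granted (a), your argument is sound and is a mild variant of the paper's. You compare $\mu_{\<f\circ\varphi_w\>}$ and $\mu_{\<f\>}$ on $F^w$ through strong locality of energy measures, then equidistribute over the $8^n$ cells. The paper instead identifies $\mu_{\<f\>}$ with $\mu^w_{\<f|_{F^w}\>}$ on $F^w\setminus\partial_oF^w$. It then concludes from the squeeze $\mcE(f)=\sum_w\mu_{\<f\>}(F^w)\le\sum_w\mcE^w(f|_{F^w})=\mcE(f)$.

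One correction: the transport of $\mu_{\<f\circ\varphi_w\>}$ between interiors of different cells comes from local symmetry, not reflection symmetry. Testing with $g$ supported in $F^v\setminus\partial_oF^v$ shows that the folded copy of each cell's interior measure equals the average over all cells, hence all of them coincide. Reflection symmetry only involves the global maps $\mcG_x$ and $\mcG_y$ and cannot relate two arbitrary cells.
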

\begin{proof}
(a) is an immediate consequence of \cite[Theorem 2.9]{Murugan} by noticing that $F^w\setminus \partial_oF^w$ is a uniform domain by \cite[Proposition 2.4]{CQ2}. \medskip

(b). Let $\mu^w_{\<\cdot\>}$ denote the energy measure associated with $(\mcE^w,\mcF^w)$. For each $f\in\mcF$ with compact support in  $F^w$, it holds by the local symmetry of $(\mcE,\mcF)$ that 
\begin{align*} 
\int_{F^w} g(z)\mu_{\< f\>}(dz)
&=\mcE(f,fg)-\frac12\mcE(f^2,g)\\
&=\sum_{v\in W_n}\big(\mcE^v(f,fg)-\frac12\mcE^v(f^2,g)\big)\\
&=\mcE^w(f,fg)-\frac12\mcE^w(f^2,g)=\int_{F^w} g(z)\mu^w_{\<f|_{F^w}\>}(dz)
\end{align*}
for   every $g\in\mcF$ supported on $F^w$. 
In particular, this implies 
\begin{equation}\label{e:3.1*} 
\mu_{\<f\>}(F^w\setminus\partial_o F^w)=\mu^w_{\<f|_{F^w}\>}(F^w\setminus\partial_oF^w).
\end{equation}
Summing over $w\in W_n$ and using (a) together with the local symmetry of $(\mcE,\mcF)$, we have
\begin{equation}\label{e:3.2} 
 \mcE(f)=\sum_{w\in W_n}\mu_{\<f\>}(F^w)
=\sum_{w\in W_n}\mu^w_{\<f|_{F^w}\>}(F^w\setminus\partial_oF^w)
\leq \sum_{w\in W_n}\mcE^w(f|_{F^w})=\mcE(f).
 \end{equation}
Combining \eqref{e:3.1*} and \eqref{e:3.2} yields (b). 
\end{proof}

\begin{proposition}\label{prop32}
Let $n\geq 0$.
\begin{enumerate}
	\item[(a)] For $w\in W_n$, it holds that 
	$\mcF=\{f\circ\Psi_w:f\in\mcF^w\}$.
	
	\item[(b)] For  $w,v\in W_n$,
	\[
	\mcE^w(f\circ\Psi^{-1}_w)=\mcE^v(f\circ\Psi^{-1}_v)\quad\hbox{ for }f\in\mcF.
	\]
	
	\item[(c)] By (b), we can define $\mcE^{(n)}$ by 
	\[
	\mcE^{(n)}(f,g):=\mcE^{w}(f\circ\Psi_w^{-1},g\circ\Psi_w^{-1})\quad\hbox{ for }f,g\in\mcF,
	\] 
	where we arbitrarily fix a $w\in W_n$. In particular,  $(\mcE^{(n)},\mcF)$ is a Dirichlet form satisfying ${\bf HK}(d_w)$, local symmetry and reflection symmetry. 
\end{enumerate}
\end{proposition}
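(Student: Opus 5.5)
The plan is to reduce everything to the identity $f\circ\varphi_w=(f\circ\Psi_w)\circ\varphi$ on $F$ and its analogues, together with the local symmetry relation applied at level $n$.

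\textbf{Part (a).} For $w\in W_n$, $\varphi_w=\Psi_w\circ\varphi_\emptyset$ restricted appropriately. More precisely, write $\phi_w:=\Psi_w^{-1}\circ\varphi_w:F\to F$. Then $\phi_w$ is one of the "standard" folding maps: it is $\varphi_{v}$-type unfolding composed with a possible reflection. Since $\Psi_w$ is a similitude with no rotation, $\Psi_w^{-1}\circ\varphi_w=\varphi_{\emptyset}=\mathrm{id}$ on $F$.

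Hence $\mcF^w=\{g:\ g\circ\Psi_w\in\mcF\}$, up to checking that the folding from $F$ onto $F^w$ fixes $F^w$. This gives $\mcF=\{g\circ\Psi_w:g\in\mcF^w\}$ once surjectivity is checked: given $h\in\mcF$, set $g=h\circ\Psi_w^{-1}$ on $F^w$.

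The subtle point is that $f\circ\varphi_w$ really equals $(f\circ\Psi_w)\circ\varphi^{(n)}$, where $\varphi^{(n)}:F\to F$ is the level-$n$ folding onto the cell $F^{1\cdots1}$ rescaled. So I first establish $g\circ\varphi_w = (g\circ\Psi_w)\circ\Psi_{1^n}^{-1}\circ\varphi_{1^n}$ up to a reflection $\mcG$ built from $\mcG_x,\mcG_y$ (depending on the parity of the position of $F^w$). Reflection symmetry then handles the $\mcG$, and local symmetry with $w=1^n$ handles $\varphi_{1^n}$. Note $h\circ\Psi_{1^n}^{-1}\circ\varphi_{1^n}$ restricted to any level-$n$ cell is $h$ composed with a reflection-rescaling, so its level-$n$ local symmetry decomposition gives $8^n$ copies of $h$ composed with reflections.

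\textbf{Part (b).} Let $h=f$ with $f\in\mcF$, and $H:=f\circ\Psi_{1^n}^{-1}\circ\varphi_{1^n}\in\mcF$ (by the above, for $f\in\mcF$). Its restriction to each $F^v$ is $f\circ\mcG^{(v)}\circ\Psi_v^{-1}$ with $\mcG^{(v)}\in\langle\mcG_x,\mcG_y\rangle$. Applying the local symmetry at level $n$ to $H$, each term $8^{-n}\mcE(H\circ\varphi_v)$ equals $\mcE^v(f\circ\mcG^{(v)}\circ\Psi_v^{-1})$. But $H\circ\varphi_v=H$, so each summand equals $8^{-n}\mcE(H)$ individually. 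Therefore $\mcE^v(f\circ\mcG^{(v)}\circ\Psi_v^{-1})$ is independent of $v$. Replacing $f$ by $f\circ\mcG^{(v)}$, which is allowed by reflection symmetry, gives $\mcE^v(f\circ\Psi_v^{-1})=8^{-n}\mcE(H_v)$ for the appropriately reflected $H_v$. The $H_v$ are all reflections of one another, so they have equal $\mcE$-energy by reflection symmetry.

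\textbf{Part (c).} $\mcE^{(n)}$ is a closed Markovian form on $\mcF$ since $f\mapsto f\circ\Psi_w^{-1}$ intertwines it with $(\mcE^w,\mcF^w)$, which is a regular strongly local Dirichlet form by \cite[Proposition 2.20]{BBKT}. Regularity transfers because $\Psi_w$ is a homeomorphism scaling $\mu$ by $8^{-n}$.

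Reflection symmetry follows since $\mcG_x$ conjugated by $\Psi_w$ is a reflection of $F^w$, which is realized through the choice of $v$ in (b). Local symmetry follows from the identity $\varphi_{v}$ conjugated by $\Psi_w$ equal to $\varphi_{wv}$ (up to reflection), combined with local symmetry of $\mcE$ at level $n+|v|$ and Lemma~\ref{lemma31}(b) to split energies cellwise.

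${\bf HK}(d_w)$ holds because $\mcE^{(n)}=c\,\mcE$ on functions of the form above. Indeed, from (b) and local symmetry, $\mcE(f\circ\Psi_{w}^{-1}\circ\varphi_w)=8^n\mcE^{(n)}(f)$. Alternatively, and more simply, $\mcE^{(n)}$ is a scaled copy of the restricted process, which inherits the heat kernel bounds by the self-similar scaling $3^{-nd_w}$ in time.

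The main obstacle is the bookkeeping of the reflections $\mcG^{(v)}$ relating $\varphi_w$ to unfolded copies. I expect to handle it by induction on $n$ using $\varphi_{wv}=\varphi_w\circ\ldots$ and the explicit form of $\bar\varphi$.
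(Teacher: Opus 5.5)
There is a genuine gap in part (a), and it propagates: (b) needs $f\circ\Psi_v^{-1}\in\mcF^v$, and (c) needs $f\mapsto f\circ\Psi_w^{-1}$ to be a bijection $\mcF\to\mcF^w$. Your starting identity $\Psi_w^{-1}\circ\varphi_w=\mathrm{id}$ is false. In fact $\Psi_w^{-1}\circ\varphi_w(z)=\varphi(3^nz-3^n\Psi_w(q_1))=\mcG_w\circ\varphi(3^n z)$, where $\mcG_w\in\{\mathrm{Id},\mcG_x,\mcG_y,\mcG_x\circ\mcG_y\}$ is fixed by the parities of the integer vector $3^n\Psi_w(q_1)$, and this map sends every level-$n$ cell onto $F$. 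You do correct this, but the repair ("reflection symmetry handles $\mcG$, local symmetry with $w=1^n$ handles $\varphi_{1^n}$") proves neither inclusion.

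For $\mcF\subset\{f\circ\Psi_w:f\in\mcF^w\}$ you must show $h\circ\Psi_{1^n}^{-1}\circ\varphi_{1^n}\in\mcF$ for $h\in\mcF$. Local symmetry only says $f\circ\varphi_{1^n}\in\mcF$ for some $f\in\mcF$ defined on all of $F$. To use it you would first need $f\in\mcF$ with $f|_{F^{1^n}}=h\circ\Psi_{1^n}^{-1}$, an extension from a small cell that is essentially the claim itself. The paper gets this inclusion from the gluing result \cite[Proposition 5.1]{Hino}: the continuous function $h\circ\Psi_w^{-1}\circ\varphi_w$, which is a reflected rescaled copy of $h$ on each level-$n$ cell, lies in $\mcF$.

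For the reverse inclusion, which you never address, you need that $G\in\mcF$ implies $G\circ\Psi_w\in\mcF$. Apply this to $G=g\circ\varphi_w$, using $\varphi_w=\mathrm{id}$ on $F^w$. This is invariance of the domain under restricting to a cell and blowing up. Local and reflection symmetry act at a fixed scale and cannot supply it. The paper takes it from the self-similarity of $(\mcE_B,\mcF)$, whose domain is the same $\mcF$ by Remark \ref{remark24}.

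The ${\bf HK}(d_w)$ argument in (c) also fails as written. The identity $\mcE(f\circ\Psi_w^{-1}\circ\varphi_w)=8^n\mcE^{(n)}(f)$ is just the definition of $\mcE^{(n)}$. It compares $\mcE^{(n)}(f)$ with the $\mcE$-energy of a different function, so it gives no comparability $\mcE^{(n)}\asymp c\,\mcE$ on $\mcF$, which is what a stability theorem requires. The remark about a "scaled copy of the restricted process" is only a heuristic. The paper applies $C_1\mcE_B\le\mcE\le C_2\mcE_B$ to $f\circ\Psi_w^{-1}\circ\varphi_w$ and uses the exact scaling $\mcE_B^{(n)}=3^{n(d_w-d_f)}\mcE_B$ (Remark \ref{remark33}). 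This gives $C_13^{n(d_w-d_f)}\mcE_B\le\mcE^{(n)}\le C_23^{n(d_w-d_f)}\mcE_B$ on $\mcF$, after which \cite{AB,GHL2} apply.

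Once (a) is in hand, your (b) and the symmetry parts of (c) are essentially the paper's argument. In (b), though, the local-symmetry detour is vacuous: $H\circ\varphi_v=H$ makes it a tautology. What actually does the work is that $3^n$ is odd, so $\varphi(3^n\,\cdot)$ commutes with $\mcG_x$ and $\mcG_y$. Hence $f\circ\Psi_v^{-1}\circ\varphi_v=(f\circ\varphi(3^n\,\cdot))\circ\mcG_v$, and reflection symmetry finishes. You should verify this commutation explicitly rather than assert that the $H_v$ are reflections of one another.
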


\smallskip

\begin{remark}\label{remark33} \rm 
Note that for the standard Dirichlet form $(\mcE_B,\mcF)$, the self-similar property  \cite[Theorem 3.2]{BBKT} implies that 
\[
\mcE_B^{(n)}(f)=3^{n(d_w-d_f)}\mcE_B(f)\quad\hbox{ for }f\in\mcF.
\]
 \end{remark}

\begin{proof}
(a) By the self-similar property of $(\mcE_B,\mcF)$, we see $g|_{F^w}\circ\Psi_w\in\mcF$ for any $g\in\mcF$. In other words, by the definition of $\mcF^w$, 
\[
\mcF\supset \{f\circ\Psi_w:f\in\mcF^w\}. 
\]
On the other hand, by \cite[Proposition 5.1]{Hino}, for any $g\in\mcF$, it holds that $g\circ\Psi^{-1}_w\circ\varphi_w\in\mcF$, which implies $g\circ\Psi^{-1}_w\in\mcF^w$. Therefore,
\[
\mcF\subset \{f\circ\Psi_w:f\in\mcF^w\}. 
\]

(b)  In fact, the functions $f\circ\Psi_w^{-1}\circ\varphi_w$ and $f\circ\Psi_v^{-1}\circ\varphi_v$ either coincide or differ by a reflection symmetry, specifically, by $\mcG_x$, $\mcG_y$ or their composition  $\mcG_x\circ\mcG_y$. Thus, (b) follows from the reflection symmetry of $(\mcE,\mcF)$ and the definition of $\mcE^w$ and $\mcE^v$. \medskip

(c)  First, by Remark \ref{remark33} and the fact that $C_1\mcE_B \leq \mcE\leq C_2\mcE_B$ for some positive constants $C_1,C_2$ (Remark \ref{remark24}), we have
\[
C_13^{n(d_w-d_f)}\mcE_B\leq \mcE^{(n)}\leq C_23^{n(d_w-d_f)}\mcE_B.
\]
Therefore, by the stability theorem of sub-Gaussian heat kernel estimates (see \cite{AB,GHL2}), the form $(\mcE^{(n)},\mcF)$ satisfies the condition ${\bf HK}(d_w)$.

Next, we verify the local symmetry of $(\mcE^{(n)},\mcF)$. For any $f\in\mcF$   and $v\in W_m$ with $m\geq 0$, 
\begin{align*} 
\mcE^{(n),v}(f|_{F^v})&=8^{-m}\mcE^{(n)}(f|_{F^v}\circ\varphi_v)\\
&=8^{-m}\mcE^w(f|_{F^v}\circ\varphi_v\circ\Psi_w^{-1})\\
&=8^{-m-n}\mcE(f|_{F^v}\circ\varphi_v\circ\Psi_w^{-1}\circ\varphi_{w})\\
&=8^{-m-n}\mcE(f|_{F^v}\circ\Psi_w^{-1}\circ\varphi_{w\cdot v})\\
&=\mcE^{w\cdot v}(f|_{F^v}\circ\Psi_w^{-1}).
\end{align*}
On the other hand, note that 
\begin{align*} 
\mcE^{(n)}(f)=\mcE^{w}(f\circ\Psi_w^{-1}),
\end{align*}
and by Lemma \ref{lemma31},
\[
\mcE^w(f\circ\Psi_w^{-1})=\sum_{v\in W_m}\mcE^{w\cdot v}(f|_{F^v}\circ\Psi_w^{-1}).
\]
It follows that $\mcE^{(n)}(f)=\sum_{v\in W_m}\mcE^{(n),v}(f|_{
F^v
})$, which establishes the local symmetry  of $\mcE^{(n)}$.

Finally, we check the reflection symmetry. For any $f\in\mcF$,
\begin{align*} 
\mcE^{(n)}(f\circ\mcG_x)&=\mcE^w(f\circ\mcG_x\circ\Psi_w^{-1})\\
&=8^{-n}\mcE(f\circ\mcG_x\circ\Psi_w^{-1}\circ\varphi_w)\\
&=8^{-n}\mcE(f\circ\Psi_w^{-1}\circ\varphi_w\circ\mcG_x)\\
&=8^{-n}\mcE(f\circ\Psi_w^{-1}\circ\varphi_w)\\
&=\mcE^w(f\circ\Psi_w^{-1})=\mcE^{(n)}(f),
\end{align*}

and similarly $\mcE^{(n)}(f\circ\mcG_y)=\mcE^{(n)}(f)$. Therefore, the reflection symmetry of $\mcE^{(n)}$ holds. 
\end{proof}

To conclude this section, we present some basic facts that will be  frequently used later. 

\begin{lemma}\label{lemma35}
Let $n,k\in \mathbb{N}$.
\begin{enumerate}
	\item[(a)] $\mcE^{(n),w}(f)=\mcE^{(n+k)}(f\circ\Psi_w)$ for  $w\in W_k$ and $f\in\mcF^w$.
	\item[(b)] $\mcE^{w}(f\circ\varphi_v|_{F^w})=\mcE^{v}(f|_{F^v})$ for $w,v\in W_n$ and $f\in\mcF$. 
	\item[(c)] $\sum_{v\in A}\mcE^{w}(f\circ\varphi_v|_{F^w},g)=\mcE^A(f|_{F^A},g\circ\varphi_w|_{F^A})$ for  $w\in W_n$, $A\subset W_n$, $f\in \mcF^A$ and $g\in\mcF^w$.  
\end{enumerate}
\end{lemma}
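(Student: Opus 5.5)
All three items will be proved by unwinding the definitions of $\mcE^w$, $\mcE^{(n)}$ and the folding maps, together with two elementary composition identities. The first identity is
\[
\varphi_v\circ\Psi_w^{-1}\circ\varphi_w=\varphi_v\circ\varphi_w\ \hbox{ on } F
\]
up to reflections; it also appears implicitly in the proof of Proposition \ref{prop32}(c). The second is $\Psi_u^{-1}\circ\varphi_{u\cdot w}=\varphi_w\circ\Psi_u^{-1}\circ\varphi_u$ for $u\in W_n$ and $w\in W_k$. This holds because folding at a finer scale inside $F^u$, after unfolding $F^u$ to $F$, is the same as first folding onto $F^u$, pulling back to $F$, and folding onto $F^w$. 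Both identities can be checked directly from the formula $\varphi_w(z)=3^{-n}\varphi(3^n(z-\Psi_w(q_1)))+\Psi_w(q_1)$ and the $2$-periodicity and evenness of $\bar\varphi$.

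For (a), fix $u\in W_n$ and write
\[
\mcE^{(n),w}(f)=8^{-k}\mcE^{(n)}(f\circ\varphi_w)=8^{-k}\mcE^u(f\circ\varphi_w\circ\Psi_u^{-1})=8^{-k-n}\mcE(f\circ\varphi_w\circ\Psi_u^{-1}\circ\varphi_u).
\]
By the second identity, $\varphi_w\circ\Psi_u^{-1}\circ\varphi_u=\Psi_u^{-1}\circ\varphi_{u\cdot w}$. The right-hand side is therefore $\mcE^{u\cdot w}\big((f\circ\Psi_u^{-1})|_{F^{u\cdot w}}\big)$. Since $\Psi_{u\cdot w}=\Psi_u\circ\Psi_w$, applying Proposition \ref{prop32}(b),(c) at level $n+k$ with the word $u\cdot w$ identifies this with $\mcE^{(n+k)}(f\circ\Psi_w)$.

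For (b), by definition
\[
\mcE^w\big(f\circ\varphi_v|_{F^w}\big)=8^{-n}\mcE(f\circ\varphi_v\circ\varphi_w).
\]
The map $\varphi_v\circ\varphi_w$ equals $\varphi_v$ composed with one of $\mathrm{Id},\mcG_x,\mcG_y,\mcG_x\circ\mcG_y$. The reason is that $\varphi_v$ restricted to $F^w$ is $\Psi_v\circ\Psi_w^{-1}$ up to a reflection, as in the proof of Proposition \ref{prop32}(b). Reflection symmetry then gives $8^{-n}\mcE(f\circ\varphi_v)=\mcE^v(f|_{F^v})$.

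For (c), the left side is bilinear in $f$ and the right side in $g$. I would polarize: expand $\mcE^w\big(\sum_{v\in A}f\circ\varphi_v|_{F^w},g\big)$ and pass to the level of $\mcE$ via $\varphi_w$. The key step is the change of variables $\mcE^w(h\circ\varphi_v|_{F^w},g)=\mcE^v(h|_{F^v},g\circ\varphi_w|_{F^v})$. This is proved like (b), polarizing the identity of (b) applied to $h+tg\circ\varphi_w$; here one uses that $\varphi_w\circ\varphi_v=\varphi_w$ on $F^w$ up to the appropriate reflection, so that $g\circ\varphi_w\circ\varphi_v$ and $g\circ\varphi_v\circ\varphi_w$ transform consistently. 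Summing over $v\in A$ and recalling the definition of $\mcE^A$ gives (c).

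The main obstacle is bookkeeping in (b) and (c). One must make sure the same reflection is applied to both arguments, since reflection symmetry is stated only for the quadratic form, and it passes to the bilinear form by polarization. One must also check that $f\in\mcF^A$, rather than $f\in\mcF$, still gives $f|_{F^v}\circ\varphi_v\in\mcF$ for each $v$, which holds by the definition of $\mcF^v$.
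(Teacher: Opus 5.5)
Your proposal is correct and follows essentially the same route as the paper: (a) is the paper's chain of equalities, with the step $f\circ\varphi_w\circ\Psi_u^{-1}\circ\varphi_u=f\circ\Psi_u^{-1}\circ\varphi_{u\cdot w}$ justified by your identity $\Psi_u^{-1}\circ\varphi_{u\cdot w}=\varphi_w\circ\Psi_u^{-1}\circ\varphi_u$. That identity is exact, and your check of it is right; the paper instead argues that both sides are locally symmetric and agree on $F^{u\cdot w}$. Parts (b) and (c) are the same unwinding of definitions as in the paper.

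You should drop the ``up to reflections'' hedges. For $v,w\in W_n$ one has $\varphi_v\circ\varphi_w=\varphi_v$ exactly, because $\varphi_v$ is invariant under reflection in every line of the $3^{-n}$-grid. This is what the paper uses, and (c) genuinely needs the exact form. Your stated ``first identity'' is not correct as written, but it is never used.
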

\begin{proof}
(a) We fix an arbitrary $v\in W_n$. Then 
\begin{align*} 
\mcE^{(n),w}(f)
&=8^{-k}\mcE^{(n)}(f\circ\varphi_w)\\
&=8^{-k}\mcE^{v}(f\circ\varphi_w\circ\Psi_v^{-1})\\
&=8^{-(n+k)}\mcE(f\circ\varphi_w\circ\Psi_v^{-1}\circ\varphi_v)=8^{-(n+k)}\mcE(f\circ\Psi_w\circ\Psi_{vw}^{-1}\circ\varphi_{vw})\\
&=\mcE^{vw}(f\circ\Psi_w\circ\Psi_{vw}^{-1})\\
&=\mcE^{(n+k)}(f\circ\Psi_w),
\end{align*} 
where in the fourth line, the two functions are the same as they  both have local symmetry and coincide on the cell $F^{vw}$. \medskip

(b) $\mcE^w(f\circ\varphi_v|_{F^w})=8^{-n}\mcE(f\circ\varphi_v\circ\varphi_w)=8^{-n}\mcE(f\circ\varphi_v)=\mcE^v(f|_{F^v})$. \medskip

(c) $\displaystyle\sum_{v\in A}\mcE^w(f\circ\varphi_v|_{F^w},g)=8^{-n}\sum_{v\in A}\mcE(f\circ\varphi_v,g\circ\varphi_w)=\sum_{v\in A}\mcE^v(f|_{F^v},g\circ\varphi_w|_{F^v})$, where the first equality uses the relation $f\circ\varphi_v\circ\varphi_w=f\circ\varphi_v$, and the second follows from $g\circ\varphi_w\circ\varphi_v=g\circ\varphi_w$.  
\end{proof}

\section{Resistances and extreme ratios}\label{sec4}

Recall that  $(\mcE,\mcF)$   is a symmetric strongly local regular Dirichlet form on $L^2(F; \mu)$ 
   satisfying ${\bf HK}(d_w)$, local symmetry and reflection symmetry. The form $(\mcE^{(n)},\mcF)$ with $n\geq 0$ is defined as in Proposition \ref{prop32}-(c). For any disjoint closed sets $A,B\subset F$, the effective resistance  with respect to $(\mcE^{(n)},\mcF)$ is defined as  
\[
R^{(n)}(A,B):=\left(\inf\Big\{\mcE^{(n)}(f):\,f\in\mcF,\,f|_A=0\hbox{ and }f|_B=1\Big\}\right)^{-1}. 
\]
In this and the next section, we consider the following specific effective resistances: 
\begin{align*}
R_x^{(n)}:=R^{(n)}(L_2,L_4),\ R_y^{(n)}:=R^{(n)}(L_1,L_3)\ \hbox{ and }\ R^{(n)}(S_1,S_2),
\end{align*}
where, for a fixed large integer $k\geq2$,
\[
S_1:=\big([0,\frac13]\times\{\frac13+\frac{3^{-k}}2\}\big)\cap F
\ \hbox{ and }\ S_2:=\big(\{\frac13+\frac{3^{-k}}2\}\times [0,\frac13]\big)\cap F. \medskip 
\]

Our main goal in this section is to understand the following two extreme ratios: 
\begin{align*} 
\sup(\mcE|\mcE_B)&:=\sup\left\{\frac{\mathcal{E}(f)}{\mathcal{E}_B(f)}:f\in \mcF,\, f\hbox{ is not a constant}\right\},\\ \inf(\mcE|\mcE_B)&:=\inf\left\{\frac{\mathcal{E}(f)}{\mathcal{E}_B(f)}:f\in \mcF,\, f\hbox{ is not a constant}\right\}.
\end{align*}
The following proposition establishes a key relationship between these extreme ratios and the effective resistances. 

\begin{proposition}\label{prop41}
\begin{enumerate}
	\item[(a)] There exist constants $C_1,C_2>0$, independent of $\mcE$, such that 
	\[
	C_1\cdot\limsup_{n\to\infty}\frac{3^{n(d_f-d_w)}}{R_x^{(n)}\wedge R_y^{(n)}}\leq\sup(\mcE|\mcE_B)\leq C_2\cdot\liminf_{n\to\infty}\frac{3^{n(d_f-d_w)}}{R_x^{(n)}\wedge R_y^{(n)}}.
	\]
	
	\item[(b)] There exist constants $C_3,C_4>0$, depending on $k$ but independent of $\mcE$, such that 
	\[
	C_3\cdot\limsup_{n\to\infty}\frac{3^{n(d_f-d_w)}}{R^{(n)}(S_1,S_2)}
	\leq\inf(\mcE|\mcE_B)\leq C_4\cdot\liminf_{n\to\infty}\frac{3^{n(d_f-d_w)}}{R^{(n)}(S_1,S_2)}. 
	\]
\end{enumerate}
\end{proposition}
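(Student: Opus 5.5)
The plan is to compare $\mcE$ and $\mcE_B$ through the rescaled forms $\mcE^{(n)}$, using the fact that the ratio of $\mcE^{(n)}$ to its $\mcE_B$ analogue does not depend on $n$. By Remark \ref{remark33}, $\mcE_B^{(n)}=3^{n(d_w-d_f)}\mcE_B$. Moreover, Proposition \ref{prop32}-(c) gives $\mcE^{(n)}(f)=8^{-n}\mcE(f\circ\Psi_w^{-1}\circ\varphi_w)$, and local symmetry of $\mcE_B$ gives the analogous identity $\mcE_B^{(n)}(f)=8^{-n}\mcE_B(f\circ\Psi_w^{-1}\circ\varphi_w)$. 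Hence
\[
\sup(\mcE^{(n)}|\mcE_B^{(n)})=\sup(\mcE|\mcE_B)\quad\hbox{and}\quad\inf(\mcE^{(n)}|\mcE_B^{(n)})=\inf(\mcE|\mcE_B).
\]
It follows that $3^{n(d_f-d_w)}\mcE^{(n)}(f)/\mcE_B(f)$ lies between $\inf(\mcE|\mcE_B)$ and $\sup(\mcE|\mcE_B)$. Comparing the minimizers of the resistance problems then gives the easy halves:
\[
\frac{3^{n(d_f-d_w)}}{R^{(n)}(A,B)}\le \sup(\mcE|\mcE_B)\,\frac1{R_B(A,B)}\quad\hbox{and}\quad \frac{3^{n(d_f-d_w)}}{R^{(n)}(A,B)}\ge \inf(\mcE|\mcE_B)\,\frac1{R_B(A,B)},
\]
where $R_B$ is the effective resistance for $\mcE_B$. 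Taking $(A,B)=(L_2,L_4)$ or $(L_1,L_3)$ yields the left inequality in (a), with $C_1=\min(R_B(L_2,L_4),R_B(L_1,L_3))$, which equals $R_B(L_2,L_4)$ by rotation symmetry of $\mcE_B$. Taking $(A,B)=(S_1,S_2)$ yields the right inequality in (b), with $C_4=R_B(S_1,S_2)$, which depends on $k$.

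The reverse inequalities are the substantive part. For the upper bound in (a), I would take an almost-extremal $f$ with $\mcE(f)\ge(1-\epsilon)\sup(\mcE|\mcE_B)\,\mcE_B(f)$ and localize its energy to small cells. By Lemma \ref{lemma31}, the ratio $\mu_{\<f\>}(F^w)/\mu_{B,\<f\>}(F^w)$ must be nearly maximal on cells carrying most of the $\mcE_B$-energy. Rescaling such a cell via Lemma \ref{lemma35}(a) gives a function $g=f\circ\Psi_w$ that is nearly extremal for the pair $(\mcE^{(n)},\mcE_B^{(n)})$. I then need to show that nearly extremal functions can be compared with harmonic functions between opposite sides. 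Concretely, I would bound the energy of any $g$ from above by a sum over cells of oscillation across $x$- and $y$-sides times the conductances $1/R_x^{(m)}$ and $1/R_y^{(m)}$, building the test potential cell by cell from the $x$- and $y$-harmonic functions. This is a Poincaré- or Kusuoka–Zhou-type discretization: $\mcE^{(n)}(g)\lesssim \sum_{w\in W_m}\big(\mathrm{osc}\big)^2/(R_x^{(n+m)}\wedge R_y^{(n+m)})$, while $\mcE_B$ is bounded below by the corresponding sum with $3^{(n+m)(d_w-d_f)}$, using the ${\bf HK}(d_w)$ comparison and its uniform-constant discretization for $\mcE_B$. Taking the ratio gives $\sup(\mcE|\mcE_B)\le C_2\inf_n 3^{n(d_f-d_w)}/(R_x^{(n)}\wedge R_y^{(n)})$ along all $n$, hence the $\liminf$.

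For the lower bound in (b), I would dually show that $\mcE^{(n)}(g)$ dominates a discrete energy with conductance comparable to $1/R^{(n)}(S_1,S_2)$. The point of the pair $S_1,S_2$ is that it separates a corner region: a horizontal segment and a vertical segment meeting near the corner of the cell $F^{1}$. Any non-constant function must pay at least the $S_1$–$S_2$ resistance cost at some scale, in some rotated or reflected copy obtained through reflection symmetry. I would glue flows: given unit flows between the $S_1$- and $S_2$-type sets in subcells, glue them into a flow realizing an $\mcE_B$-type discrete gradient. That bounds $\mcE(f)\ge c\,\mcE_B(f)\cdot 3^{n(d_f-d_w)}/R^{(n)}(S_1,S_2)$.

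The main obstacle is this flow-gluing step, which is where the absence of rotation symmetry bites. An $x$-direction flow and a $y$-direction flow of $\mcE^{(n)}$ have different costs, and they must be connected at corners. The set pair $(S_1,S_2)$ is designed to convert $x$-current into $y$-current, and I expect the delicate part to be controlling the $k$-dependent constants there.
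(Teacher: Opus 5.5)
Your two easy halves (the left inequality in (a) and the right one in (b)) are correct and are exactly Remark~\ref{remark42}. The genuine gap is the lower bound $\inf(\mcE|\mcE_B)\ge C_3\limsup_n 3^{n(d_f-d_w)}/R^{(n)}(S_1,S_2)$. This is the substance of the proposition, and you give only a heuristic for it. Three ingredients are missing.

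(i) There are no vector fields here, so a ``flow'' must be an element $\phi$ of the broken space $\mcF^{W_{n+k}}$ whose pairing $\mcE^{W_{n+k}}(\phi,g)$ with $g\in\mcF$ reduces to functionals of the face traces of $g$. The lower bound then comes from Cauchy--Schwarz, $\mcE(g)\ge\mcE^{W_{n+k}}(\phi,g)^2/\mcE^{W_{n+k}}(\phi)$ (Corollary~\ref{coro411}). Your direct claim that $\mcE^{(n)}(g)$ dominates a discrete energy with conductance $1/R^{(n)}(S_1,S_2)$ is a Poincar\'e-type inequality for the unknown $\mcE$, and nothing in your sketch proves it; the dual construction exists precisely to avoid it.

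(ii) The real obstruction is not the $k$-dependence of constants; $k$ is fixed, so that is harmless. The obstruction is that cell-wise flows glue only if every brick using a given face has the same flux \emph{profile} there, not merely the same total flux. The paper enforces this by deriving the corner brick $u^{(n)}_d$ and the straight bricks $u^{(n)}_x,u^{(n)}_y$ from the single harmonic function $g^{(n)}$ (the $S_1$--$S_2$ potential together with its $\mcG_x,\mcG_y$ reflections). It folds over $\Psi_1(F)$ and over the cell families $\{5\}\times\{1,4,8\}^{k-1}$ and $\{8\}\times\{1,5,2\}^{k-1}$. It then proves (Lemma~\ref{lemma47}, via \eqref{e:4.5}) that there are single functionals $\Theta_i$ on $\check{\mcF}_i$, compatible with $\mcG_x,\mcG_y$ and normalized by $\Theta_i\1=1$, such that every brick pairs with $g$ as $\Theta_{\mathrm{in}}-\Theta_{\mathrm{out}}$. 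Only then can an arbitrary divergence-free cell datum ${\bf c}$ be realized with energy $\lesssim R^{(n)}(S_1,S_2)\sum_i c_i^2$ (Proposition~\ref{lemma48}), and these data be summed along the cross graph $\wt{V}_{n+k}$ into a global unit flow. Note also that $90$-degree rotated copies are unavailable: $\mcG_x,\mcG_y$ generate only reflections and the half-turn.

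(iii) Duality with a unit flow bounds a resistance, not $\mcE(f)$ for general $f$. The paper needs Lemma~\ref{lemma49} (cross-graph resistance $\lesssim 3^{n(d_w-d_f)}R_B$, proved by applying the part-(a) interpolation to $\mcE_B$) and the extension to arbitrary disjoint closed sets (Lemma~\ref{lemma412}). It also needs a level-set slicing $f=\sum_l f_l$ with $\mcE_B$-harmonic interpolants $g_{L,l}$, strong locality, and lower semicontinuity of $\mcE_B$. Your sentence ``That bounds $\mcE(f)\ge c\,\mcE_B(f)\cdots$'' skips all of this. If you instead meant to take the flow data from $f$ itself, you would need a lower discretization bound for $\mcE_B$ in terms of $\Theta$-samples of $f$, which you do not supply.

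For the upper bound in (a), your route is in substance the paper's, but two statements are wrong as written. An estimate $\mcE^{(n)}(g)\lesssim\sum_{w\in W_m}(\mathrm{osc})^2/(R^{(n+m)}_x\wedge R^{(n+m)}_y)$ at a fixed level $m$ cannot hold for arbitrary $g$, because fine-scale oscillation is invisible to the right-hand side. It holds for the cell-by-cell interpolant $g_m$, and you reach $g$ only through $g_m\to g$ uniformly and lower semicontinuity of $\mcE$. That yields the $\liminf_n$, not the $\inf_n$ you assert. The near-extremal localization step is unnecessary, and the interpolant must actually be specified. The paper takes $h^{(n)}_1=h^{(n)}_x\wedge h^{(n)}_y$, its reflections $h^{(n)}_2,h^{(n)}_3,h^{(n)}_4$, and $u^{(n)}_i=h^{(n)}_i/\sum_j h^{(n)}_j$. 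This is a partition of unity with $\mcE^{(n)}(u^{(n)}_i)\le 648/(R^{(n)}_x\wedge R^{(n)}_y)$, whose trace on each edge depends only on the two endpoint values, so the glued $g_n$ is continuous and lies in $\mcF$. The Poincar\'e inequality for $\mcE_B$ applied to ball averages (Lemma~\ref{lemma44}) then supplies the factor $3^{n(d_f-d_w)}\mcE_B(f)$.
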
 

\smallskip

\begin{remark}\label{remark42} \rm
Let $R^{(n)}_B$ denote the effective resistance with respect to $(\mcE^{(n)}_B,\mcF)$. Since $\mcE^{(n)}_B=3^{n(d_w-d_f)}\mcE_B$ for all $n\geq 0$, $R^{(n)}_B(L_2,L_4)=R^{(n)}_B(L_1,L_3)=c_1\cdot3^{n(d_f-d_w)}$ and $R^{(n)}_B(S_1,S_2)=c_2\cdot3^{n(d_f-d_w)}$ for some positive constants $c_1,c_2$.  It follows immediately that for $n\geq 0$,
\begin{align*}
\sup(\mcE|\mcE_B)&\geq\max \bigg\{c_1\frac{3^{n(d_f-d_w)}}{R^{(n)}_x}, \,  c_1\frac{3^{n(d_f-d_w)}}{R^{(n)}_y},  \, 
c_2\frac{3^{n(d_f-d_w)}}{R^{(n)}(S_1,S_2)}\bigg\},\\
\inf(\mcE|\mcE_B)&\leq \min\bigg\{ c_1\frac{3^{n(d_f-d_w)}}{R^{(n)}_x}, \, c_1\frac{3^{n(d_f-d_w)}}{R^{(n)}_y}, \,  c_2\frac{3^{n(d_f-d_w)}}{R^{(n)}(S_1,S_2)}
\bigg\}.
\end{align*} 
So, to prove Proposition \ref{prop41}, it suffices to show 
\[
\sup(\mcE|\mcE_B)\leq C\cdot\liminf_{n\to\infty}\frac{3^{n(d_f-d_w)}}{R_x^{(n)}\wedge R_y^{(n)}}\ \hbox{ and }\ \inf(\mcE|\mcE_B)\geq C'\cdot\limsup_{n\to\infty}\frac{3^{n(d_f-d_w)}}{R^{(n)}(S_1,S_2)}
\]
for some $C,C'>0$ independent of $\mcE$.
\end{remark}

\subsection{Proof of Proposition \ref{prop41}-(a)}
The proof of Proposition \ref{prop41}-(a) follows a standard extension argument. we  introduce the following notations. 

\begin{enumerate}
\item Let 
\[
V_0=\{q_1,q_2,q_3,q_4\}
\] 
be the four corner vertices of the unit cube $F_0$,
 and for $n\geq 1$, let 
\[
V_n=\bigcup_{w\in W_n}\Psi_w(V_0). 
\] 
be the set of \textit{level-$n$ vertices}. 

\smallskip

\item Let $h^{(n)}_x$ and $h^{(n)}_y$ be functions in $\mcF$ such that 
\begin{align*}
&h^{(n)}_x|_{L_4}=1,\quad h^{(n)}_x|_{L_2}=0 \quad \hbox{and} \quad \mcE^{(n)}(h^{(n)}_x)=1/R^{(n)}_x,\\
&h^{(n)}_y|_{L_1}=1,\quad h^{(n)}_y|_{L_3}=0 \quad  \hbox{and} \quad \mcE^{(n)}(h^{(n)}_y)=1/R^{(n)}_y.
\end{align*}
Define
\[
h^{(n)}_1=h_x^{(n)}\wedge h_y^{(n)}.
\]
Note that 
\begin{align*} &h_1^{(n)}(q_1)=1,\quad  h_1^{(n)}|_{L_1}=h_x^{(n)}|_{L_1}, \quad  h_1^{(n)}|_{L_4}=h_y^{(n)}|_{L_4},\\& h_1^{(n)}|_{L_2\cup L_3}=0
\quad \hbox{ and } \quad h_1^{(n)}|_{F\cap[0,\frac12]\times [0,\frac12]}\geq 1/2.
\end{align*}
Define 
\[
h^{(n)}_2=h^{(n)}_1\circ\mcG_x,\quad h^{(n)}_3=h^{(n)}_1\circ\mcG_x\circ\mcG_y,
\quad \hbox{and}\quad h^{(n)}_4=h^{(n)}_1\circ\mcG_y.
\]
Set 
\[
u^{(n)}_i=\frac{h^{(n)}_i}{\sum_{j=1}^4 h_j^{(n)}} \quad \hbox{ for } i=1,2,3,4,
\]
Note that $\sum_{j=1}^4 h_j^{(n)}|_{\partial_o F}=1$ and $\sum_{j=1}^4 h_j^{(n)}\geq \frac 12$ on $F$.
\end{enumerate}

\begin{lemma}\label{lemma43}
$\mcE^{(n)}\big(u_i^{(n)}\big)\leq \frac{648}{R^{(n)}_x\wedge R^{(n)}_y}$.
\end{lemma}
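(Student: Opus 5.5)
We estimate $\mcE^{(n)}(u_i^{(n)})$ with the Leibniz/chain rule for energy measures of the strongly local form $(\mcE^{(n)},\mcF)$. By the symmetry of the construction ($u_2^{(n)}=u_1^{(n)}\circ\mcG_x$, etc., and $\sum_j h^{(n)}_j$ is invariant under $\mcG_x,\mcG_y$), together with the reflection symmetry of $\mcE^{(n)}$ from Proposition \ref{prop32}(c), it suffices to treat $i=1$.

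\textbf{Step 1: bound on $h^{(n)}_1$.} Write $h_1=h^{(n)}_1$, $S=\sum_{j=1}^4 h^{(n)}_j$ and $\nu_{\<\cdot\>}$ for the energy measure of $\mcE^{(n)}$. Since $h_1=h_x\wedge h_y$, the standard lattice property of energy measures gives
\[
\nu_{\<h_1\>}\le \nu_{\<h_x\>}+\nu_{\<h_y\>},
\]
more precisely $\nu_{\<h_x\wedge h_y\>}=\1_{\{h_x<h_y\}}\nu_{\<h_x\>}+\1_{\{h_x\ge h_y\}}\nu_{\<h_y\>}$. Hence
\[
\mcE^{(n)}(h_1)\le \frac1{R_x^{(n)}}+\frac1{R_y^{(n)}}\le \frac{2}{R^{(n)}_x\wedge R^{(n)}_y}.
\]
By reflection symmetry the same bound holds for each $h_j$. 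Consequently, using $\nu_{\<\sum a_j\>}\le 4\sum\nu_{\<a_j\>}$,
\[
\nu_{\<S\>}(F)\le 4\sum_{j=1}^4\mcE^{(n)}(h_j)\le \frac{32}{R_x^{(n)}\wedge R_y^{(n)}}.
\]

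\textbf{Step 2: the quotient.} We have $0\le h_1\le 1$ and $S\ge 1/2$; the latter is stated in the paper and can be checked, since each point lies in one quadrant where the corresponding $h_j\ge 1/2$. The function $\Phi(a,b)=a/b$ is smooth on $[0,1]\times[1/2,4]$, so $u_1=\Phi(h_1,S)\in\mcF$. The chain rule for energy measures gives
\[
\nu_{\<u_1\>}\le 2\big(\|\partial_a\Phi\|_\infty^2\,\nu_{\<h_1\>}+\|\partial_b\Phi\|_\infty^2\,\nu_{\<S\>}\big).
\]
Here $|\partial_a\Phi|=1/S\le 2$ and $|\partial_b\Phi|=h_1/S^2\le 4$. (One may sharpen this by writing $u_1=h_1/S$ with $h_1\le S$, which yields $|\partial_b\Phi|\le 1/S\le 2$.)

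\textbf{Step 3: assembling constants.} Integrating the chain-rule bound over $F$ and inserting the estimates from Step 1,
\[
\mcE^{(n)}(u_1)\le 2\Big(4\cdot\frac{2}{R_x^{(n)}\wedge R_y^{(n)}}+16\cdot\frac{32}{R_x^{(n)}\wedge R_y^{(n)}}\Big)=\frac{1040}{R_x^{(n)}\wedge R_y^{(n)}},
\]
which overshoots the target constant $648$. To reach $648$ we use the sharper bound $|\partial_b\Phi|\le 2$, giving
\[
\mcE^{(n)}(u_1)\le 2\Big(4\cdot 2+4\cdot 32\Big)\frac1{R_x^{(n)}\wedge R_y^{(n)}}=\frac{272}{R_x^{(n)}\wedge R_y^{(n)}}\le\frac{648}{R_x^{(n)}\wedge R_y^{(n)}}.
\]
The bookkeeping of numerical factors is the main point to get right. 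The constant depends on how the Cauchy–Schwarz steps are grouped, and any grouping producing a constant at most $648$ suffices. The justification of the chain rule for the quotient, done via truncation and the boundedness of $S^{-1}$, is routine.
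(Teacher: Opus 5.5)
Your proof is correct, but it takes a different route from the paper.

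\textbf{The paper's route.} It works only with form values. It controls the reciprocal separately: writing $1/S=2\phi(2S)$ with the normal contraction $\phi(t)=1/t$ for $t\ge 1$ and $\phi(t)=t$ for $t<1$, the Markov property gives $\mcE^{(n)}(1/S)\le 16\,\mcE^{(n)}(S)\le 256\,\mcE^{(n)}(h^{(n)}_1)$. It then applies the product inequality $\mcE(fg)^{1/2}\le\|f\|_\infty\mcE(g)^{1/2}+\|g\|_\infty\mcE(f)^{1/2}$ of \cite[Theorem 1.4.2(ii)]{FOT} to $h^{(n)}_i\cdot(1/S)$. This yields $(16+2)^2=324$ times $1/R^{(n)}_x+1/R^{(n)}_y$, hence $648$.

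\textbf{Your route.} You work with energy measures, using the chain rule for the strongly local form with $\Phi(a,b)=a/b$ together with Cauchy--Schwarz for the mutual energy measure.

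\textbf{What each buys.} The paper's argument needs only the Markov property and an inequality valid for every Dirichlet form. It requires no energy measures and no $C^1$ modification of $\Phi$ near the origin. Your argument uses the finer structure available for strongly local forms. Because $\partial_b\Phi=-h_1/S^2$ is evaluated pointwise, you can exploit $h_1\le S$, which the sup-norm product inequality throws away. That is why you get the better constant $272$.

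You also justify $\mcE^{(n)}(h^{(n)}_1)\le\mcE^{(n)}(h^{(n)}_x)+\mcE^{(n)}(h^{(n)}_y)$ through the lattice formula for $\nu_{\<h_x\wedge h_y\>}$; the paper states this without comment. The numerical constant plays no role later, since it is absorbed into the constant of Proposition \ref{prop41}(a).
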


\begin{proof}
By the reflection symmetry of $\mcE^{(n)}$ from Proposition \ref{prop32}-(c),  
\[
\mcE^{(n)}(h_i^{(n)})=\mcE^{(n)}(h_1^{(n)})\leq \mcE^{(n)}(h^{(n)}_x)+\mcE^{(n)}(h^{(n)}_y)=1/R^{(n)}_x+1/R^{(n)}_y.
\]
Define  $\varphi (t):= 1/t$ for $t\geq 1$ and $\varphi (t) :=t$ for $t< 1$, which is a normal contraction on $\R$.
As $2\sum_{j=1}^4h^{(n)}_j\geq 1$, we have by the Markovian property of $\mcE^{(n)}$, 
\begin{align*}
\mcE^{(n)}\big(1/\sum_{j=1}^4h^{(n)}_j\big)&=4\mcE^{(n)}\big(\varphi(2\sum_{j=1}^4h^{(n)}_j)\big)\leq 4\mcE^{(n)}(2\sum_{j=1}^4h_j^{(n)})\leq 16\mcE^{(n)}\big(\sum_{j=1}^4h_j^{(n)}\big)\\& \leq16\cdot 16\mcE^{(n)}(h_1^{(n)})\leq 256/R^{(n)}_x+256/R^{(n)}_y,
\end{align*}
  It follows that
\begin{align*} 
\mcE^{(n)}(u^{(n)}_i)&=\mcE^{(n)}\big(h^{(n)}_i/\sum_{j=1}^4h^{(n)}_j\big)\\&\leq \Big(\|h^{(n)}_i\|_\infty\sqrt{\mcE^{(n)}\big(1/\sum_{j=1}^4h^{(n)}_j\big)}+\big\|1/\sum_{j=1}^4h_j^{(n)}\big\|_\infty\sqrt{\mcE^{(n)}(h^{(n)}_i)}\Big)^2\\&\leq 324/R^{(n)}_x+324/R^{(n)}_y,
\end{align*} 
where the first inequality holds by \cite[Theorem 1.4.2 (ii)]{FOT}, and the second follows from  $\|h^{(n)}_i\|_\infty= 1$ and $\|1/\sum_{j=1}^4h_j^{(n)}\|_\infty\leq 2$.
\end{proof}

We now proceed to prove Proposition \ref{prop41}-(a). Fix $f\in \mcF$, and define $f_n\in l(V_n)$ by 
\[
f_n(q):=\frac1{\mu\big(B(q,3^{-n})\big)}\int_{B(q,3^{-n})}fd\mu\quad\hbox{ for each }q\in V_n,
\]
where $B(q,3^{-n})$ denotes the ball of radius $3^{-n}$ centered at $q$.
\begin{lemma}\label{lemma44}
There exists a constant $C>0$ such that 
\[
\sum_{w\in W_n}\sum_{i=1}^4\Big(f_n(\Psi_wq_i)-\frac14\sum_{j=1}^4f_n(\Psi_wq_j)\Big)^2\leq C\cdot 3^{n(d_f-d_w)}\mcE_B(f)\quad\hbox{ for any }f\in\mcF.
\]
\end{lemma}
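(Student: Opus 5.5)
The plan is to compare the discrete oscillation on each level-$n$ cell with a local Poincaré-type inequality for $\mcE_B$, and then sum over cells using local symmetry. The first reduction is elementary: $\sum_{i}\big(a_i-\frac14\sum_j a_j\big)^2\le \frac14\sum_{i,j}(a_i-a_j)^2$. So it suffices to bound
\[
\sum_{w\in W_n}\sum_{i,j}\big(f_n(\Psi_wq_i)-f_n(\Psi_wq_j)\big)^2 .
\]
For each $w$, the balls $B(\Psi_wq_i,3^{-n})$, $i=1,\dots,4$, are all contained in $B_w:=B(\Psi_w q_1, 3^{-n+1})$. Their $\mu$-measures are comparable to $3^{-nd_f}$, and hence to $\mu(B_w)$, by Ahlfors regularity of $\mu$. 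Let $\bar f_{B_w}$ denote the average of $f$ over $B_w$. By Jensen (Cauchy--Schwarz),
\[
\big(f_n(\Psi_wq_i)-\bar f_{B_w}\big)^2\le \frac{1}{\mu(B(\Psi_wq_i,3^{-n}))}\int_{B_w}|f-\bar f_{B_w}|^2d\mu\le \frac{C}{\mu(B_w)}\int_{B_w}|f-\bar f_{B_w}|^2d\mu .
\]

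The key input is the Poincaré inequality for $(\mcE_B,\mcF)$ at scale $3^{-n}$:
\[
\int_{B(x,r)}|f-\bar f_{B(x,r)}|^2d\mu\le C r^{d_w}\mu_{B,\<f\>}(B(x,Ar)).
\]
This holds because $(\mcE_B,\mcF)$ satisfies ${\bf HK}(d_w)$ (Remark \ref{remark24}); by \cite{GHL}-type characterizations (or Barlow--Bass), ${\bf HK}(d_w)$ is equivalent to volume doubling together with the Poincaré inequality PI$(d_w)$ and cutoff Sobolev. Combining this with the previous display and $\mu(B_w)\asymp 3^{-nd_f}$, the contribution of each cell is at most
\[
C\,3^{-nd_w}3^{nd_f}\mu_{B,\<f\>}\big(B(\Psi_wq_1,A3^{-n+1})\big)=C\,3^{n(d_f-d_w)}\mu_{B,\<f\>}\big(B(\Psi_wq_1,A3^{-n+1})\big).
\]

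To finish, sum over $w\in W_n$. The enlarged balls $B(\Psi_wq_1,A3^{-n+1})$ have bounded overlap: each point of $F$ lies in at most $C(A)$ of them, since the number of level-$n$ cells within distance $c3^{-n}$ of any point is bounded. Therefore
\[
\sum_w\mu_{B,\<f\>}\big(B(\Psi_wq_1,A3^{-n+1})\big)\le C\mu_{B,\<f\>}(F)=C\mcE_B(f),
\]
where the last equality uses the normalization of energy measures. This gives the claimed bound.

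The only nontrivial step is invoking the Poincaré inequality with the energy measure; everything else is bookkeeping. If one prefers to avoid citing the equivalence theorem, an alternative is to derive the Poincaré inequality on a cell-neighbourhood directly. This would use self-similarity of $\mcE_B$ (Remark \ref{remark33}) to rescale to level $0$, where it becomes a Poincaré inequality on a fixed finite union of cells. That inequality follows from compactness of the embedding $\mcF\subset C(F)$ and the fact that $\mcE_B(f)=0$ only for constants on connected unions of cells. Rescaling back produces exactly the factor $3^{n(d_f-d_w)}$.
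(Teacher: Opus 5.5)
Your proposal is correct and follows essentially the same route as the paper. The paper also applies the Poincar\'e inequality for $(\mcE_B,\mcF)$ that comes from ${\bf HK}(d_w)$ (citing Barlow--Bass). It bounds each $\big(f_n(\Psi_wq_i)-f_n(\Psi_wq_j)\big)^2$ by averaging over balls contained in $B(\Psi_wq_1,(1+\sqrt2)3^{-n})$, whose $\mu$-measures are comparable to $3^{-nd_f}$, and then sums over $w\in W_n$. The differences are only cosmetic: you compare with the ball mean rather than using the paper's double integral over a product of two balls, and you spell out the bounded-overlap step that the paper leaves implicit.
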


\begin{proof}
As a consequence of ${\bf HK}(d_w)$, it is well-known that the Poincar\'e inequality for $(\mcE_B,\mcF)$ (see, for example, \cite[Theorem 7.3]{BB3}) holds: 
\[
\frac{1}{\mu\big(B(z,c_1r)\big)}\int_{B(z,c_1r)\times B(z,c_1r)}\big(f(z_1)-f(z_2)\big)^2\mu(dz_1)\mu(dz_2)\leq C_2r^{d_w}\mu_{B,\<f\>}\big(B(z,r)\big),
\] for any $z\in F$ and $r>0$,
where $\mu_{B,\<f\>}$ denotes the energy measure of $f$ associated with $(\mcE_B,\mcF)$, and $c_1\in(0,1),C_2\in(0,\infty)$ are constants independent of $f$. For $n\geq 0$ and $w\in W_n$, we apply the inequality to the ball $B(\Psi_wq_1, C_33^{-n})$ with $C_3=(1+\sqrt{2})c_1^{-1}$ to obtain for $i\neq j\in\{1,2,3,4\}$ that 
\begin{align*} 
&\quad\ \Big(f_n(\Psi_wq_i)-f_n(\Psi_wq_j)\Big)^2\\
&\leq \frac1{\mu\big(B(\Psi_wq_i,3^{-n})\big)\mu\big(B(\Psi_wq_j,3^{-n})\big)}\int_{B(\Psi_wq_i,3^{-n})\times B(\Psi_wq_j,3^{-n})}\big(f(z_1)-f(z_2)\big)^2\mu(dz_1)\mu(dz_2)\\
&\leq C_43^{2nd_f}\int_{B\big(\Psi_wq_1,(1+\sqrt{2})3^{-n}\big)\times B\big(\Psi_wq_1,(1+\sqrt{2})3^{-n}\big)}\big(f(z_1)-f(z_2)\big)^2\mu(dz_1)\mu(dz_2)\\
&\leq C_53^{n(d_f-d_w)}\mu_{B,\<f\>}\big(B({\Psi_wq_1},C_33^{-n})\big),
\end{align*}
where $C_4,C_5$ are some positive constants that do not depend on $f$ and $n$. It follows
\begin{align*} 
\sum_{i=1}^4\Big(f_n(\Psi_wq_i)-\frac14\sum_{j=1}^4f_n(\Psi_wq_j)\Big)^2&\leq \sum_{i=1}^4\max_{j\neq i}\big(f_n(\Psi_wq_i)-f_n(\Psi_wq_j)\big)^2\\
&\leq 4C_53^{n(d_f-d_w)}\mu_{B,\<f\>}\big(B({\Psi_wq_1},C_33^{-n})\big) .
\end{align*}
The desired inequality follows by summing the above estimate over all $w\in W_n$. 
\end{proof}

\begin{proof}[Proof of Proposition \ref{prop41}-(a)]
Let $f\in\mcF$ be fixed, and let $f_n$ be defined as in Lemma \ref{lemma44}. We define a sequence of approximating functions $\{g_n; n\geq 0\}$ by 
\[
g_n=\sum_{w\in W_n}\sum_{i=1}^4 f_n(\Psi_wq_i)\cdot \big(u_i^{(n)}\circ\Psi_w^{-1}\big).
\]
Note that for each $w$ and $i$, $g_n|_{\Psi_w L_i}$ depends only on the values of $f_n$ at the two endpoints of $\Psi_wL_i$, so $g_n$ is continuous. By \cite[Proposition 5.1]{Hino}, we have $g_n\in\mcF$. Moreover, for each $w\in W_n$,
\begin{align*}
\mcE^w(g_n|_{F^w})&=\mcE^{(n)}(g_n\circ \Psi_w)\\
&=\mcE^{(n)}\big(g_n\circ \Psi_w-\frac14\sum_{j=1}^4f_n(\Psi_wq_j)\big)\\
&=\mcE^{(n)}\Big(\sum_{i=1}^4\big(f_n(\Psi_wq_i)-\frac14\sum_{j=1}^4f_n(\Psi_wq_j)\big)u_i^{(n)}\Big)\\
&\leq 4\sum_{i=1}^4\big(f_n(\Psi_wq_i)-\frac14\sum_{j=1}^4f_n(\Psi_wq_j)\big)^2\mcE^{(n)}(u_i^{(n)})\\
&\leq \frac{2592}{R^{(n)}_x\wedge R^{(n)}_y}\sum_{i=1}^4\big(f_n(\Psi_wq_i)-\frac14\sum_{j=1}^4f_n(\Psi_wq_j)\big)^2,
\end{align*}
where the first equality follows from Proposition \ref{prop32}-(c), the third equality from the identity $\sum_{i=1}^4u_i^{(n)}=1$, and the last inequality uses Lemma \ref{lemma43}. Now, by Lemmas \ref{lemma31} and \ref{lemma44}, 
\begin{align}
\mcE(g_n)&=\sum_{w\in W_n}\mcE^w(g_n|_{F^w})\nonumber\\
&\leq \frac{2592}{R^{(n)}_x\wedge R^{(n)}_y}\sum_{w\in W_n}\sum_{i=1}^4\Big(f_n(\Psi_wq_i)-\frac14\sum_{j=1}^4f_n(\Psi_wq_j)\Big)^2\label{e:prop4.1.1}\\
&\leq C\frac{3^{n(d_f-d_w)}}{R_x^{(n)}\wedge R_y^{(n)}}\mcE_B(f)\nonumber
\end{align} 
for some constant $C>0$.
By construction,  $g_n\to f$ uniformly, and thus in $L^2(F;\mu)$, as $n\to \infty$. Hence, by the lower-semicontinuity of $\mcE$,  
\[
\mcE(f)\leq \liminf_{n\to\infty}\mcE(g_n)\leq C\liminf_{n\to\infty}\frac{3^{n(d_f-d_w)}}{R_x^{(n)}\wedge R_y^{(n)}}\mcE_B(f). 
\]
Since this inequality holds for every  $f\in \mcF$, Proposition \ref{prop41}-(a) follows. 
\end{proof}

\subsection{Building brick functions}\label{sec4.3}
Before proving Proposition \ref{prop41}-(b), we introduce some auxiliary functions which behave like flows between faces. We always fix a positive integer $k\geq 2$ for the definition of $S_1$ and $S_2$. \medskip 

First, for our construction, we consider a function $f^{(n)}\in\mcF$ with $n\geq 0$ such that 
\begin{align*} 
&f^{(n)}=0\quad \hbox{ on }S_1\cup \mcG_y(S_1)\cup \mcG_x(S_1)\cup \mcG_x\circ\mcG_y(S_1),\\ 
&f^{(n)}=1\quad\hbox{ on }S_2\cup \mcG_y(S_2)\cup \mcG_x(S_2)\cup \mcG_x\circ\mcG_y(S_2),\\
f^{(n)}\hbox{ is }\mcE^{(n)}&\hbox{-harmonic in }F\setminus \big(S_1\cup S_2\cup \mcG_x(S_1\cup S_2)\cup \mcG_y(S_1\cup S_2)\cup\mcG_x\circ\mcG_y(S_1\cup S_2)\big). 
\end{align*}
\begin{figure}
    \centering
    \includegraphics[width=0.28\linewidth]{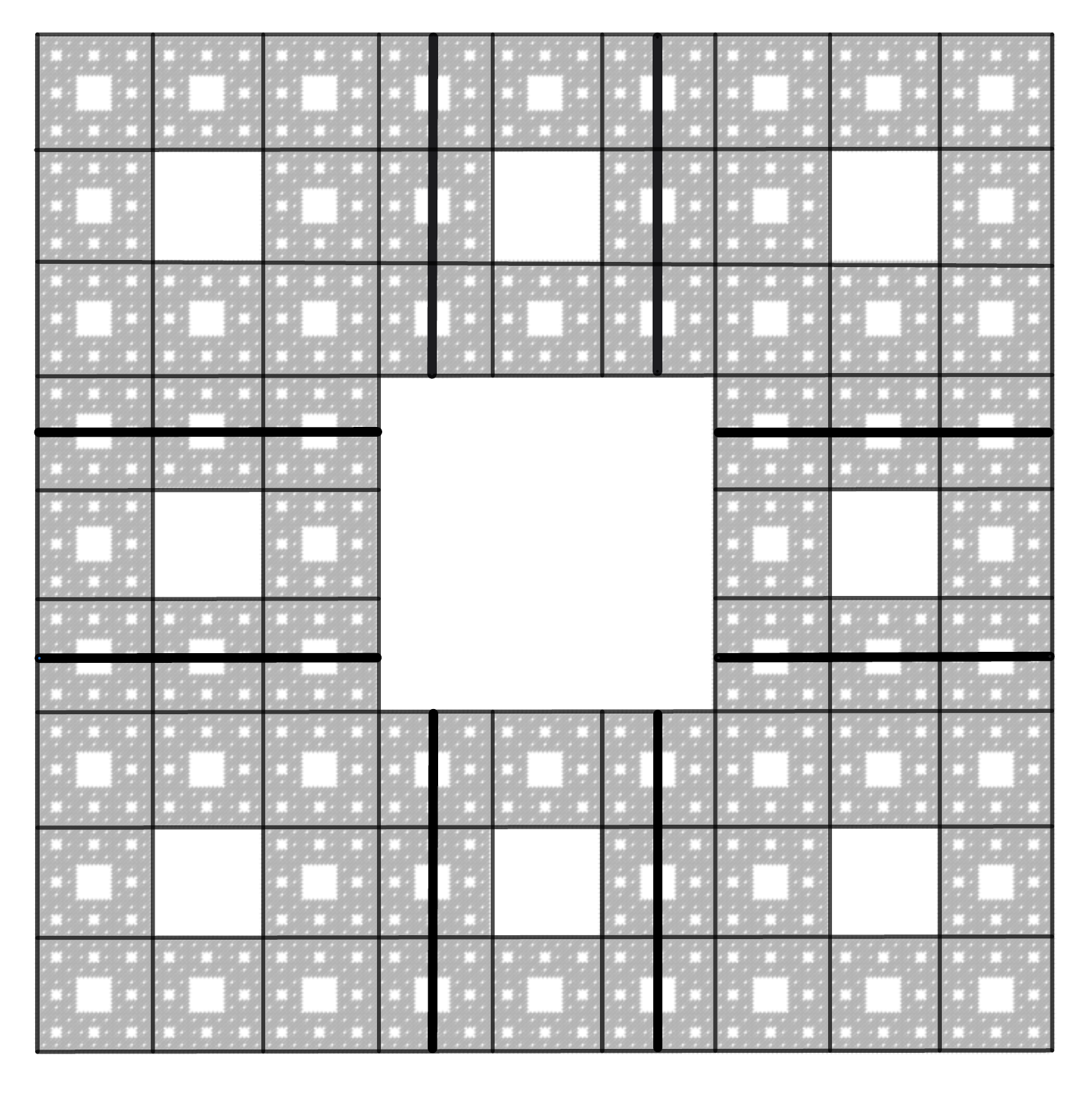}
    \begin{picture}(0,0)
\put(-137,45){$S_1$}\put(-82,-5){$S_2$}
\end{picture}
    \caption{$S_1$, $S_2$ and their reflections with respect to $\mcG_x$, $\mcG_y$.}
    \label{Fig3}
\end{figure}
See Figure \ref{Fig3} for an illustration. Note that by symmetry,
\[
1/R^{(n)}(S_1, S_2)\leq\mcE^{(n)}(f^\bn)\leq 4/R^{(n)}(S_1, S_2).
\]
We renormalize $f^\bn$ by taking
\[
g^\bn=4f^\bn/\mcE^{(n)}(f^{(n)})
\]
so that
\[
\mcE^{(n)}(g^\bn,f^\bn)=4.
\]
Heuristically, $g^\bn$ represents the unit flow from $S_2$ to $S_1$ within a corner of $F$, along with three other reflected copies of it. \medskip 

\begin{figure}[H]
	\centering
	\begingroup
	\newcommand{\flowpanel}[3]{%
		\begin{minipage}[c]{112pt}
			\centering
			{\setlength{\unitlength}{1pt}%
				\begin{picture}(112,112)
					\put(8,8){\includegraphics[width=96pt]{#1.pdf}}
					#3
			\end{picture}}\\[-1pt]
			{\scriptsize #2}
	\end{minipage}}
	\flowpanel{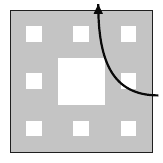}{(a).~$u_d^{(n)}$}{%
		\put(56,5){\makebox(0,0){$L_1$}}%
		\put(107,56){\makebox(0,0){$L_2$}}%
		\put(56,107){\makebox(0,0){$L_3$}}%
		\put(5,56){\makebox(0,0){$L_4$}}}%
	\hspace{8pt}
	\flowpanel{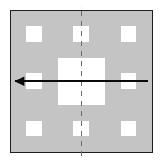}{(b).~$u_x^{(n)}$}{}\hspace{8pt}
	\flowpanel{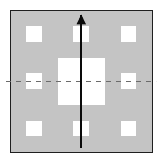}{(c).~$u_y^{(n)}$}{}
	\endgroup
	\caption{The building brick functions $u_d^{(n)}$, $u_x^{(n)}$, and $u_y^{(n)}$.} 
	\label{Fig5}
\end{figure}

Next, we use folding maps to construct three basic \textit{building brick functions} $u^{(n)}_d,u^{(n)}_x,u^{(n)}_y$. Heuristically, with respect to $\mcE^{(n)}$, $u^{(n)}_d$ represents a unit flow from $L_2$ to $L_3$; $u^{(n)}_x$ represents a unit flow from $L_2$ to $L_4$; and $u^{(n)}_y$ represents a unit flow from $L_1$ to $L_3$. See Figure \ref{Fig5} for an illustration.

\begin{enumerate}
\item We define 
\[
u^\bn_d=\sum_{v\in W_{k-1}}g^\bn\circ\Psi_1\circ\varphi_v\circ\Psi_{\dot{3}^{k-1}}.  
\]
In other words, we first use $\Psi_1$ to zoom in $g^{(n)}|_{\Psi_1(F)}$, then unfold $g^{(n)}\circ\Psi_1$ for each $v$ and sum over them, and finally  zoom in on the corner cell $F^{\dot{3}^{k-1}}$ to obtain $u^{(n)}_d$.

\item We define $u^\bn_x$ in two steps. First,  define 
\[
\
\wt{u}^\bn_x=\sum_{v\in \{1,4,8\}^{k-1}}g^\bn\circ\Psi_5\circ\varphi_v\circ\Psi_{\dot{4}^{k-1}}.  
\]
Then,  define 
\[
u^\bn_x=\wt{u}^\bn_x-\wt{u}^\bn_x\circ\mcG_x.
\]

\item We define $u^\bn_y$ in two steps. First,  define 
\[
\
\wt{u}^\bn_y=\sum_{v\in \{1,5,2\}^{k-1}}g^\bn\circ\Psi_8\circ\varphi_v\circ\Psi_{\dot{2}^{k-1}}.  
\]
Then,  define 
\[
u^\bn_y=\wt{u}^\bn_y-\wt{u}^\bn_y\circ\mcG_y. \medskip
\]
\end{enumerate}

See Figure \ref{Fig4} for an illustration of the constructions.

\begin{figure}[H]
	\centering
	\begingroup
	\newcommand{\embeddedpanel}[2]{%
		\begin{minipage}[c]{112pt}
			\centering
			{\setlength{\unitlength}{1pt}%
				\begin{picture}(104,104)
					\put(0,0){\includegraphics[width=104pt]{#1.pdf}}
			\end{picture}}\\[-1pt]
			{\scriptsize #2}
	\end{minipage}}
	\embeddedpanel{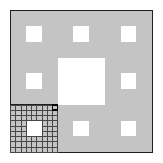}{(a).~$u_d^{(n)}$}\hspace{12pt}
	\embeddedpanel{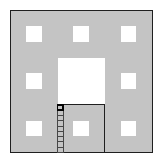}{(b).~$\widetilde u_x^{(n)}$}\hspace{12pt}
	\embeddedpanel{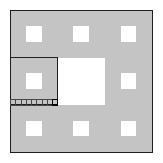}{(c).~$\widetilde u_y^{(n)}$}
	\endgroup
	\caption{Construction of $u_d^{(n)}$, $\widetilde u_x^{(n)}$, and $\widetilde u_y^{(n)}$. We restrict $g^{(n)}$ to the union of small bordered cells, fold the resulting values onto the highlighted cell, and then zoom in to obtain the desired functions.}
	\label{Fig4}
\end{figure}

We first present some basic properties of the functions $u^{(n)}_d$, $u^{(n)}_x$, $\wt{u}^{(n)}_x$, $u^{(n)}_y$, and $\wt{u}^{(n)}_y$ in the following lemmas.

\begin{lemma}\label{lemma45}
The following energy estimates hold:
\begin{align*}
\mcE^{(n+k)}(u^{(n)}_d)\leq 4\cdot 8^{k-1}\,R^{(n)}(S_1,S_2),\\
\mcE^{(n+k)}(\wt{u}^{(n)}_x)\leq 4\cdot 3^{k-1}\,R^{(n)}(S_1,S_2),\\
\mcE^{(n+k)}(\wt{u}^{(n)}_y)\leq 4\cdot 3^{k-1}\,R^{(n)}(S_1,S_2). 
\end{align*}
\end{lemma}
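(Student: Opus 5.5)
The plan is to push each energy down to level-$n$ quantities of $g^\bn$ by repeatedly using Lemma \ref{lemma35} together with the local and reflection symmetries of $\mcE^{(n)}$ (Proposition \ref{prop32}-(c)). Then I bound the energy of $g^\bn$ itself. Since $g^\bn=4f^\bn/\mcE^{(n)}(f^\bn)$, we have
\[
\mcE^{(n)}(g^\bn)=\frac{16}{\mcE^{(n)}(f^\bn)}\leq 16\,R^{(n)}(S_1,S_2),
\]
by the bound $\mcE^{(n)}(f^\bn)\geq 1/R^{(n)}(S_1,S_2)$ noted above.

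\textbf{Step 1: reduce $u^\bn_d$ to one corner cell of $g^\bn$.} Put $h:=\sum_{v\in W_{k-1}} g^\bn\circ\Psi_1\circ\varphi_v$ and $w:=\dot 3^{k-1}\in W_{k-1}$. Lemma \ref{lemma35}-(a), applied to $\mcE^{(n+1)}$ with $w$, gives
\[
\mcE^{(n+k)}(u^\bn_d)=\mcE^{(n+1)}(h|_{F^w}\circ\Psi_w)=\mcE^{(n+1),w}(h|_{F^w}).
\]
On $F^w$ the function $h$ is a sum of $8^{k-1}$ terms $(g^\bn\circ\Psi_1)\circ\varphi_v|_{F^w}$. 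The Cauchy--Schwarz inequality for the bilinear form gives
\[
\mcE^{(n+1),w}(h|_{F^w})\leq 8^{k-1}\sum_{v}\mcE^{(n+1),w}\big((g^\bn\circ\Psi_1)\circ\varphi_v|_{F^w}\big).
\]
By Lemma \ref{lemma35}-(b), each summand equals $\mcE^{(n+1),v}\big((g^\bn\circ\Psi_1)|_{F^v}\big)$. Summing over $v\in W_{k-1}$ and using the local symmetry of $\mcE^{(n+1)}$ (Lemma \ref{lemma31}) yields $\mcE^{(n+1)}(g^\bn\circ\Psi_1)$. By Lemma \ref{lemma35}-(a) with $k=1$, this equals $\mcE^{(n),1}(g^\bn|_{F^1})$. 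Since $g^\bn$ is invariant under $\mcG_x$ and $\mcG_y$, the four corner cells $F^1,\dots,F^4$ carry equal energy. Lemma \ref{lemma31}-(b) then gives
\[
\mcE^{(n),1}(g^\bn|_{F^1})\leq \tfrac14\mcE^{(n)}(g^\bn)\leq 4R^{(n)}(S_1,S_2),
\]
which produces the first bound.

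\textbf{Step 2: the bounds for $\wt u^\bn_x$ and $\wt u^\bn_y$.} These follow the same chain, with $\Psi_5$ and $w=\dot4^{k-1}$ (respectively $\Psi_8$ and $w=\dot2^{k-1}$). The difference is that the sum now runs only over $v\in\{1,4,8\}^{k-1}$ (respectively $\{1,5,2\}^{k-1}$), i.e. over $3^{k-1}$ terms. Cauchy--Schwarz therefore costs only $3^{k-1}$, and the partial sum over these $v$ is bounded by the full sum $\mcE^{(n+1)}(g^\bn\circ\Psi_5)=\mcE^{(n),5}(g^\bn|_{F^5})$, using Lemma \ref{lemma31} and nonnegativity. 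It remains to show $\mcE^{(n),5}(g^\bn|_{F^5})\leq 4R^{(n)}(S_1,S_2)$, and similarly for cell $8$.

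\textbf{Main obstacle.} I expect the difficulty to be exactly this last constant for the edge cells. The symmetries only pair cell $5$ with cell $7$ under $\mcG_y$, which directly gives the weaker bound $8R^{(n)}(S_1,S_2)$. To get $4R$ I would first try to compare the energy of $g^\bn$ on the middle edge cells with that on the corner cells. Failing that, I would sharpen $\mcE^{(n)}(f^\bn)\geq 1/R^{(n)}(S_1,S_2)$ by restricting $f^\bn$ to a quadrant, unfolding it by reflection, and tracking how the energy measure is distributed among $F^1,\dots,F^8$ via Lemma \ref{lemma31}. If neither works, the stated inequality would hold with a larger absolute constant, and that is harmless for the later use of the lemma.
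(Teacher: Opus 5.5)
\textbf{Step 1 is fine; Step 2 leaves a gap.} Your Step 1 is correct and is essentially the paper's argument for $u^{(n)}_d$. There is one slip: the middle term should read $\mcE^{(n+k)}(h|_{F^w}\circ\Psi_w)$.

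The gap is the one you flag in Step 2. After the Cauchy--Schwarz step you enlarge $\sum_{v\in\{1,4,8\}^{k-1}}\mcE^{(n),5v}(g^{(n)}|_{F^{5v}})$ to the whole-cell energy $\mcE^{(n),5}(g^{(n)}|_{F^5})$. Since $F^5$ is mapped to itself by $\mcG_x$, its symmetry orbit is only $\{F^5,F^7\}$, so symmetry gives at best $\mcE^{(n),5}(g^{(n)}|_{F^5})\leq\frac12\mcE^{(n)}(g^{(n)})$. This yields $\mcE^{(n+k)}(\wt u^{(n)}_x)\leq 8\cdot 3^{k-1}R^{(n)}(S_1,S_2)$. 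Neither fallback is carried out, so as written the second and third inequalities are proved only with $8$ in place of $4$.

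The factor is lost exactly at the enlargement. The part of $F^5$ between $S_2$ and $\mcG_x(S_2)$ is cut off from the rest of $F$ by these two segments, so $f^{(n)}\equiv 1$ there. Consequently the energy of $g^{(n)}$ on $F^5$ is precisely twice its energy on the left strip. A sharper lower bound on $\mcE^{(n)}(f^{(n)})$ could compensate, but it would need its own proof and is unnecessary.

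\textbf{The missing observation: keep the restriction on $v$.} The cells $F^{5v}$, $v\in\{1,4,8\}^{k-1}$, make up the strip $F\cap[\frac13,\frac13+3^{-k}]\times[0,\frac13]$. For $k\geq 2$ this strip lies in the quadrant $[0,\frac12]^2$. Its images under $\mcG_x$, $\mcG_y$ and $\mcG_x\circ\mcG_y$ are unions of level-$k$ cells, pairwise disjoint up to cell boundaries, which carry no energy by Lemma \ref{lemma31}-(a). By the $\mcG_x$- and $\mcG_y$-invariance of $g^{(n)}$, all four pieces carry the same energy. Hence
$\sum_{v\in\{1,4,8\}^{k-1}}\mcE^{(n),5v}(g^{(n)}|_{F^{5v}})\leq\frac14\mcE^{(n)}(g^{(n)})\leq 4R^{(n)}(S_1,S_2)$,
which gives the stated bound for $\wt u^{(n)}_x$. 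The strip $\{8\}\times\{1,5,2\}^{k-1}$ and $\wt u^{(n)}_y$ are handled identically.

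The paper treats all three estimates at once. The set $F^A$, with $A=(\{1\}\times W_{k-1})\cup(\{5\}\times\{1,8,4\}^{k-1})\cup(\{8\}\times\{1,5,2\}^{k-1})$, lies in the lower-left quadrant, so $\mcE^{(n),A}(g^{(n)}|_{F^A})\leq 4R^{(n)}(S_1,S_2)$. Each of the three chains then ends in a partial sum over a subset of $A$.

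You are right that the constant $8$ would be harmless for Proposition \ref{lemma48}. It does not, however, prove the lemma as stated.
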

\begin{proof}
First, note that  $\mcE^{(n)}(f^{(n)})\geq 1/R^{(n)}(S_1,S_2)$. It follows that 
\[
\mcE^{(n)}(g^{(n)})=\mcE^{(n)}\big(\frac{4f^{(n)}}{\mcE^{(n)}(f^{(n)})}\big)=\frac{16}{\mcE^{(n)}(f^{(n)})}\leq 16R^{(n)}(S_1,S_2). 
\]
In particular, by Lemma \ref{lemma31}, Proposition \ref{prop32} and using symmetry, we have 
\[
\mcE^{(n),A}(g^{(n)}|_{F^A})\leq 4R^{(n)}(S_1,S_2),
\]
where $A=\big(\{1\}\times W_{k-1})\cup (\{5\}\times\{1,8,4\}^{k-1})\cup (\{8\}\times\{1,5,2\}^{k-1}\big)$. Then, 
\begin{align*}
\mcE^{(n+k)}(u^{(n)}_d)&=\mcE^{(n+k)}\big(\sum_{v\in W_{k-1}}g^\bn\circ\Psi_1\circ\varphi_v\circ\Psi_{\dot{3}^{k-1}}\big)\\
&=\mcE^{(n+1),\dot{3}^{k-1}}\big(\sum_{v\in W_{k-1}}g^{(n)}\circ\Psi_1\circ\varphi_v|_{F^{\dot{3}^{k-1}}}\big)\\
&\leq 8^{k-1}\sum_{v\in W_{k-1}}\mcE^{(n+1),\dot{3}^{k-1}}(g^{(n)}\circ\Psi_1\circ\varphi_v|_{F^{\dot{3}^{k-1}}})\\
&=8^{k-1}\sum_{v\in W_{k-1}}\mcE^{(n+1),v}(g^{(n)}\circ\Psi_1|_{F^v})\\
&=8^{k-1}\sum_{v\in W_{k-1}}\mcE^{(n+k)}(g^{(n)}\circ\Psi_1\circ\Psi_v)\\
&= 8^{k-1}\sum_{v\in W_{k-1}}\mcE^{(n),1v}(g^{(n)}|_{F^{1v}})\leq 8^{k-1}\sum_{w\in A}\mcE^{(n),w}(g^{(n)}|_{F^w})\leq 4\cdot 8^{k-1} R^{(n)}(S_1,S_2),
\end{align*}
 where the third equality uses Lemma \ref{lemma35}-(b), and the second, fourth and fifth equalities use Lemma \ref{lemma35}-(a). The other two estimates follow similarly. 
\end{proof}

\begin{lemma}\label{lemma46}
\begin{enumerate}
	\item[(a)]$u^{(n)}_d$ is $\mcE^{(n+k)}$-harmonic in $F\setminus(L_2\cup L_3)$.  
	\item[(b)]$u^{(n)}_x$ is $\mcE^{(n+k)}$-harmonic in $F\setminus(L_2\cup L_4)$, and $\wt{u}^{(n)}_x$ is constant in $F\cap[1/2,1]\times[0,1]$.
	\item[(c)]$u^{(n)}_y$ is $\mcE^{(n+k)}$-harmonic in $F\setminus (L_1\cup L_3)$, and $\wt{u}^{(n)}_y$ is constant in $F\cap[0,1]\times[1/2,1]$. 
\end{enumerate}
\end{lemma}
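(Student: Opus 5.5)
The plan is to reduce each harmonicity claim to a statement at level $n$ about $g^{(n)}$ and then transport it through the zoom maps $\Psi_w$ and the folding maps $\varphi_v$. The key input is that $g^{(n)}$ is $\mcE^{(n)}$-harmonic off $D:=S_1\cup S_2\cup\mcG_x(S_1\cup S_2)\cup\mcG_y(S_1\cup S_2)\cup\mcG_x\circ\mcG_y(S_1\cup S_2)$, i.e. $\mcE^{(n)}(g^{(n)},\phi)=0$ for all $\phi\in\mcF$ vanishing on $D$.

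For (a), I take a test function $\phi\in\mcF$ with $\phi|_{L_2\cup L_3}=0$ and write $\mcE^{(n+k)}(u^{(n)}_d,\phi)$ as $\mcE^{(n+1),\dot 3^{k-1}}$ of the zoomed-out functions, using Lemma \ref{lemma35}-(a) as in the proof of Lemma \ref{lemma45}. Lemma \ref{lemma35}-(c) with $A=W_{k-1}$ and $w=\dot 3^{k-1}$ then gives
\[
\mcE^{(n+k)}(u_d^{(n)},\phi)=\mcE^{(n+1),W_{k-1}}\big(g^{(n)}\circ\Psi_1,\ \psi\circ\varphi_{\dot 3^{k-1}}\big),
\]
where $\psi=\phi\circ\Psi_{\dot 3^{k-1}}^{-1}$ on $F^{\dot 3^{k-1}}$. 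The function $\Phi:=\psi\circ\varphi_{\dot 3^{k-1}}$ is continuous on $F$ and lies in $\mcF$ by \cite[Proposition 5.1]{Hino}. By the local symmetry of $\mcE^{(n+1)}$ the right side equals $\mcE^{(n+1)}(g^{(n)}\circ\Psi_1,\Phi)=8\,\mcE^{(n),1}(g^{(n)}|_{F^1},\Phi\circ\Psi_1^{-1})$.

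The geometric point is where $\Phi$ vanishes. Folding onto the corner cell $F^{\dot 3^{k-1}}$ sends every level-$(k-1)$ grid line $\{x=j3^{-(k-1)}\}$ with the appropriate parity to $L_2$, and likewise for $L_3$. So $\Phi$ vanishes on the grid lines of $F$ at the parities that fold onto $L_2\cup L_3$. Inside $F^1=\Psi_1(F)$, the images of these lines include the inner sides $\{1/3\}\times[0,1/3]$ and $[0,1/3]\times\{1/3\}$, and the lines $x=1/3+3^{-k}/2$ and $y=1/3+3^{-k}/2$ pass through the midpoints of the level-$k$ cells adjacent to them. I have to check carefully that $S_1,S_2$ sit exactly on lines where the folded test function vanishes. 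That is the reason $S_1,S_2$ are placed at offset $3^{-k}/2$ rather than on the cell boundaries, and the check amounts to a parity computation of $\bar\varphi$.

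With this in hand, I extend $\Phi\circ\Psi_1^{-1}$ from $F^1$ to all of $F$ by reflection across the cells, using the unfolding $\varphi_1$, to get a function in $\mcF$ vanishing on $D$. Lemma \ref{lemma35}-(c) together with the reflection symmetry of $g^{(n)}$ rewrites $8\,\mcE^{(n),1}$ as $\mcE^{(n)}(g^{(n)},\cdot)$ against this extension, up to a factor of $2$ from the two reflected copies in the corner. Harmonicity of $g^{(n)}$ then makes this vanish. The constraint that $g^{(n)}$ only pairs with the corner part through $\varphi_1$ is handled by local symmetry again.

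Parts (b) and (c) follow the same pattern with $A=\{1,4,8\}^{k-1}$, respectively $A=\{1,5,2\}^{k-1}$, and base cells $\Psi_5$, respectively $\Psi_8$. The constancy statements are direct. The cells $\Psi_{\dot 4^{k-1}}$ and the words $v$ in $\{1,4,8\}^{k-1}$ confine the support of $g^{(n)}\circ\Psi_5\circ\varphi_v\circ\Psi_{\dot 4^{k-1}}$ to the region where the fold lands in the left column. On the right half $[1/2,1]\times[0,1]$, every summand is the constant value of $g^{(n)}$ on a face where it is locally constant (the value $0$ or $1$ on $S_1$-type lines), so each summand is constant there. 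This needs a check of which boundary pieces of $F^{5}$ the folded points reach.

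Given constancy of $\wt u^{(n)}_x$ on the right half, $\wt u^{(n)}_x-\wt u^{(n)}_x\circ\mcG_x$ glues continuously across $x=1/2$. Harmonicity in $F\setminus(L_2\cup L_4)$ then follows from harmonicity of $\wt u^{(n)}_x$ in $F\setminus L_4$ away from the middle line, proved as in (a), plus reflection symmetry of $\mcE^{(n+k)}$. The two cross terms across $x=1/2$ cancel because each piece is constant on the other's region of nonzero energy, and energy measures charge no cell boundaries by Lemma \ref{lemma31}-(a).

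The main obstacle I expect is the bookkeeping step: verifying that the folded test functions vanish exactly on the reflected copies of $S_1,S_2$ (and on nothing that would break the argument), with the correct level-$k$ parities.
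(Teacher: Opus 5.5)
\textbf{There is a genuine gap in part (a).} Your reduction is the paper's: Lemma \ref{lemma35}-(a),(c) and local symmetry turn $\mcE^{(n+k)}(u^{(n)}_d,\phi)$ into $\mcE^{(n+1)}(g^{(n)}\circ\Psi_1,\Phi)$, with $\Phi=\phi\circ\Psi_{\dot3^{k-1}}^{-1}\circ\varphi_{\dot3^{k-1}}$ vanishing on $L_2\cup L_3$. (By Lemma \ref{lemma35}-(a) this equals $\mcE^{(n),1}(g^{(n)}|_{F^1},\Phi\circ\Psi_1^{-1})$, with no factor $8$.)

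The step that fails is the one showing this quantity is zero. Write $D$ for the union of $S_1,S_2$ and their reflections. The unfolded function $E:=\Phi\circ\Psi_1^{-1}\circ\varphi_1$ does not vanish on $D$. Take a point $(x,\frac13+\frac{3^{-k}}2)\in S_1$. Since $\bar\varphi(1+s)=1-s$, $\varphi_1$ sends it to height $\frac13-\frac{3^{-k}}2$. Then $\Psi_1^{-1}$ sends it to height $1-\frac{3^{-(k-1)}}2$, and $\Psi^{-1}_{\dot3^{k-1}}\circ\varphi_{\dot3^{k-1}}$ sends it to height $\frac12$. So $E|_{S_1}$ reproduces the values of $\phi$ on $F\cap\{y=\frac12\}$. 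Likewise $E|_{S_2}$ reproduces the values of $\phi$ on $F\cap\{x=\frac12\}$. These values are unconstrained, so the parity check you postpone cannot succeed.

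Independently, pairing $g^{(n)}$ with an unfolded function does not isolate $F^1$. By Lemma \ref{lemma35}-(c), $\mcE^{(n)}(g^{(n)},E)=\sum_{v\in W_1}\mcE^{(n),1}(g^{(n)}\circ\varphi_v|_{F^1},\Phi\circ\Psi_1^{-1})$. This sum involves the restrictions of $g^{(n)}$ to the edge cells $F^5,\dots,F^8$, each of which meets $D$, and local symmetry does not remove those terms.

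\textbf{The missing idea is to extend by zero instead of unfolding.} Because $\Phi$ vanishes on $L_2\cup L_3$, $\Phi\circ\Psi_1^{-1}$ vanishes on $\Psi_1(L_2\cup L_3)=F^1\cap(F^5\cup F^8)$. Setting it to $0$ on $F\setminus F^1$ gives a continuous $\wt\Phi$. It lies in $\mcF$ by Proposition \ref{prop32}-(a) and \cite[Proposition 5.1]{Hino}. Its support $F^1$ is disjoint from $D$, since $S_1,S_2$ lie strictly outside $[0,\frac13]^2$; that is the role of the positive offset here. Local symmetry and Lemma \ref{lemma35}-(a) give $\mcE^{(n)}(g^{(n)},\wt\Phi)=\mcE^{(n),1}(g^{(n)}|_{F^1},\Phi\circ\Psi_1^{-1})=\mcE^{(n+1)}(g^{(n)}\circ\Psi_1,\Phi)$. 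The left side vanishes by harmonicity of $g^{(n)}$. Thus $g^{(n)}\circ\Psi_1$ is $\mcE^{(n+1)}$-harmonic in $F\setminus(L_2\cup L_3)$, which is all the paper uses.

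\textbf{Consequences for (b) and (c).}
\begin{enumerate}
\item[(i)] The harmonicity of $\wt u^{(n)}_x$ on $F\cap(0,\frac12)\times[0,1]$ needs the same repair. For $f$ compactly supported there, $f\circ\Psi^{-1}_{\dot4^{k-1}}\circ\varphi_{\dot4^{k-1}}$ on the column $F^{\{1,4,8\}^{k-1}}$ vanishes near $L_4$ and for $x\ge\frac{3^{-(k-1)}}2$. Its $\Psi_5$-image, extended by zero, is therefore supported inside $F^5\cap\{\frac13<x<\frac13+\frac{3^{-k}}2\}$, away from $D$.
\item[(ii)] For the constancy claim, the relevant fact is that $g^{(n)}$ is constant on the whole strip of $F^5$ enclosed by $S_2$ and $\mcG_x(S_2)$, where $f^{(n)}=1$. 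The maps $\Psi_5\circ\varphi_v\circ\Psi_{\dot4^{k-1}}$ send $F\cap\{x\ge\frac12\}$ into that strip.
\item[(iii)] The line $x=\frac12$ is not a cell boundary, so Lemma \ref{lemma31}-(a) does not glue the two halves. Instead split the test function into $f-f\circ\mcG_x$ and $f+f\circ\mcG_x$, as the paper does. The first vanishes on $x=\frac12$ and is handled by harmonicity on the two open halves. The second is killed by the antisymmetry of $u^{(n)}_x$.
\end{enumerate}
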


\begin{proof}
(a)  For any $f\in C_c\big(F\setminus (L_2\cup L_3)\big)\cap \mcF$, we have 
\begin{equation}\label{e:4.1}
\begin{split} 
\mcE^{(n+k)}(f,u^\bn_d)&=\mcE^{(n+1),\dot{3}^{k-1}}(f\circ\Psi_{\dot{3}^{k-1}}^{-1},u^\bn_d\circ\Psi_{\dot{3}^{k-1}}^{-1})\\
&=\mcE^{(n+1),\dot{3}^{k-1}}\big(f\circ\Psi_{\dot{3}^{k-1}}^{-1},\sum_{v\in W_{k-1}}g^{(n)}\circ\Psi_1\circ\varphi_v|_{F_{\dot{3}^{k-1}}}\big)\\
&=\mcE^{(n+1)}(f\circ\Psi_{\dot{3}^{k-1}}^{-1}\circ\varphi_{\dot{3}^{k-1}},g^{(n)}\circ\Psi_1)\\
&=0,
\end{split} 
\end{equation}
where the first equality follows from Lemma \ref{lemma35}-(a), the second uses the definition of $u^{(n)}_d$, the third is due to Lemma \ref{lemma35}-(c), and the last equality holds since $g^{(n)}\circ\Psi_1$ is $\mcE^{(n+1)}$-harmonic in $F\setminus (L_2\cup L_3)$ and $f\circ\Psi_{\dot{3}^{k-1}}^{-1}\circ\varphi_{\dot{3}^{k-1}}$ vanishes on $L_2\cup L_3$. Therefore, $u^{(n)}_d$ is $\mcE^{(n+k)}$-harmonic in $F\setminus(L_2\cup L_3)$. 
 
\medskip

(b) It is clear from the construction of $f^{(n)}$ that $\wt{u}_x^{(n)}=1$ on $F\cap [1/2,1]\times[0,1]$,  so $\wt{u}_x^{(n)}$ is $\mcE^{(n+k)}$-harmonic in $F\cap (1/2,1)\times[0,1]$.  
By the same argument as in (a), for any $f\in \mcF\cap C_c\big(F\cap (0,1/2)\times [0,1]\big)$, we have 
\[
\mcE^{(n+k)}(f,\wt{u}_x^{(n)})=\mcE^{(n+1),\{1,8,4\}^{k-1}}(f\circ\Psi_{\dot{4}^{k-1}}^{-1}\circ\varphi_{\dot{4}^{k-1}},g^{(n)}\circ\Psi_5)=0.
\]
So $\wt{u}_x^{(n)}$ is also $\mcE^{(n+k)}$-harmonic in $F\cap (0,1/2)\times [0,1]$.  
Combining the two observations, $\wt{u}_x^{(n)}$ is $\mcE^{(n+k)}$-harmonic in $F\cap\big((0,1/2)\cup (1/2,1)\big)\times[0,1]$. 

Next, it follows immediately that $u_x^{(n)}$ is $\mcE^{(n+k)}$-harmonic in $F\cap \big((0,1/2)\cup (1/2,1)\big)\times[0,1]$, so that 
\[
\mcE^{(n+k)}(f-f\circ\mcG_x,u_x^\bn)=0\quad \hbox{ for any }
f\in \mcF\cap C_c\big(F\setminus(L_2\cup L_4)\big). 
\]
Furthermore, since  $u_x^\bn$ is antisymmetric with respect to $\mcG_x$, for any $f\in \mcF\cap C_c\big(F\setminus(L_2\cup L_4)\big)$,
\begin{align*}
    \mcE^{(n+k)}(f+f\circ\mcG_x,u_x^\bn)=\mcE^{(n+k)}(f+f\circ\mcG_x,u_x^\bn\circ\mcG_x)=-\mcE^{(n+k)}(f+f\circ\mcG_x,u_x^\bn), 
\end{align*}
which implies
\[
\mcE^{(n+k)}(f+f\circ\mcG_x,u_x^\bn)=0\quad \hbox{ for any }
f\in \mcF\cap C_c\big(F\setminus(L_2\cup L_4)\big).
\]
Combining the above observations, for any $f\in \mcF\cap C_c\big(F\setminus(L_2\cup L_4)\big)$, we have
\begin{align*}
\mcE^{(n+k)}(f,u_x^\bn)=\frac12\mcE^{(n+k)}(f-f\circ\mcG_x,u_x^\bn)+\frac12\mcE^{(n+k)}(f+f\circ\mcG_x,u_x^\bn)=0.
\end{align*}
Therefore, $u_x^\bn$ is $\mcE^{(n+k)}$-harmonic in $F\setminus(L_2\cup L_4)$.

\smallskip

The proof for (c) is exactly the same as that for (b). 
\end{proof}

Now, we consider the `\textit{normal derivative}' for the building brick functions. For $i=1,2,3,4$, denote by $\check\mcF_i$ the trace of $\mcF$ on $L_i$, i.e. 
\[
\check\mcF_i=\{f|_{L_i}:\,f\in\mcF\}. 
\]
According to \cite[Lemma 5.1]{CF}, $\check\mcF_i$ is a Hilbert space  with the norm  
\[ 
\|u\|_{\check\mcF_i}:=\sqrt{\inf\big\{\mcE_B(f,f):f\in\mcF,f|_{L_i}=u\big\}+\int_{L_i}u^2dz},
\]
where the integral is taken with respect to the one-dimensional Lebesgue measure.

\begin{lemma}\label{lemma47}
There exist bounded linear functionals $\Theta_i:\check{\mcF}_i\to \R$ for $i=1,2,3,4$ such that the following properties hold.
\begin{enumerate}
	\item[(a)] For any $f\in\mcF$,  $\Theta_4(f|_{L_4})=\Theta_2(f\circ\mcG_x|_{L_2})$ and $\Theta_3(f|_{L_3})=\Theta_1(f\circ\mcG_y|_{L_1})$.
	\item[(b)] For any $f\in \mcF$, 
	\begin{align}
		\mcE^{(n+k)}(u^{(n)}_d,f)&=\Theta_2(f|_{L_2})-\Theta_3(f|_{L_3}),\label{lemma4.7.1}\\
		\mcE^{(n+k)}(u^{(n)}_x,f)&=\Theta_2(f|_{L_2})-\Theta_4(f|_{L_4}),\label{lemma4.7.2} \\
		\mcE^{(n+k)}(u^{(n)}_y,f)&=\Theta_1(f|_{L_1})-\Theta_3(f|_{L_3}). \label{lemma4.7.3}
	\end{align}
	\item[(c)] $\Theta_i\1=1$ for $i=1,2,3,4$, where $\1$ denotes the constant function on $ \cup_{j=1}^4 L_j$ with value $1$.  
\end{enumerate}
\end{lemma}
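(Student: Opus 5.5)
The plan is to define all four functionals from the two ``straight'' bricks $u^{(n)}_x$ and $u^{(n)}_y$. Their two active faces are disjoint ($L_2\cap L_4=\emptyset$, $L_1\cap L_3=\emptyset$), so localisation is clean there. Then (a), (c) and the two identities \eqref{lemma4.7.2}, \eqref{lemma4.7.3} come essentially by construction, and \eqref{lemma4.7.1} for $u^{(n)}_d$ has to be proved as a consistency statement.

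\emph{Definition and boundedness.} Fix a cutoff $\psi\in\mcF$ with $\psi=1$ in a neighbourhood of $L_2$ and $\psi=0$ in a neighbourhood of $L_4$; such a $\psi$ can be taken Lipschitz-like and built from level-$m$ cells via \cite[Proposition 5.1]{Hino}. For $u\in\check\mcF_2$ pick $f\in\mcF$ with $f|_{L_2}=u$, and set
\[
\Theta_2(u):=\mcE^{(n+k)}(u^{(n)}_x,\psi f).
\]
This does not depend on the choice of $f$ or $\psi$. Indeed, two admissible choices differ by some $h\in\mcF$ vanishing on $L_2\cup L_4$. Since $\mcF\subset C(F)$ and the form is regular, $h$ is an $\mcF$-limit of functions in $\mcF\cap C_c\big(F\setminus(L_2\cup L_4)\big)$, and $\mcE^{(n+k)}(u^{(n)}_x,h)=0$ by Lemma \ref{lemma46}-(b). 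The functional $\Theta_4$ is defined analogously with $1-\psi$ and a minus sign, and $\Theta_1,\Theta_3$ are defined the same way from $u^{(n)}_y$. Writing $f=\psi f+(1-\psi)f$ then gives \eqref{lemma4.7.2} and \eqref{lemma4.7.3} at once.

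For boundedness, first take $u$ bounded and let $f$ be the (essentially) energy-minimising extension, truncated at level $\|u\|_\infty$. The estimate $\mcE(\psi f)\le 2\|\psi\|_\infty^2\mcE(f)+2\|f\|_\infty^2\mcE(\psi)$ contains $\|f\|_\infty$, which is not controlled by the trace norm. I would replace it by the energy-measure Leibniz bound $\mcE(\psi f)\lesssim \mcE(f)+\int f^2\,d\mu_{\<\psi\>}$. Choosing $\psi$ with $\mu_{\<\psi\>}\le C\mu$ is not available on the carpet, so instead I would control $\int f^2 d\mu_{\<\psi\>}$ by $\|f\|_{L^2}$-type quantities via ${\bf HK}(d_w)$ and a Poincar\'e/capacity argument. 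Bounded $u$ are dense in $\check\mcF_2$, which finishes this step. Throughout, $\mcE^{(n+k)}$ and $\mcE_B$ are comparable (Proposition \ref{prop32}-(c)).

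\emph{Properties (a) and (c).} For (a), use $u^{(n)}_x\circ\mcG_x=-u^{(n)}_x$ together with the reflection invariance of $\mcE^{(n+k)}$, choosing the cutoff for $\Theta_4$ as $(1-\psi)=\psi\circ\mcG_x$; then $\Theta_4(f|_{L_4})=\Theta_2(f\circ\mcG_x|_{L_2})$ follows directly, and $\Theta_3$ versus $\Theta_1$ is handled the same way. For (c), take $f=\psi$ with $\psi$ equal to $1$ on $[1/2,1]\times[0,1]$. By Lemma \ref{lemma46}-(b), $\wt u^{(n)}_x$ is constant on that strip, so only $\wt u^{(n)}_x$ on the left half contributes. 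Unfolding with Lemma \ref{lemma35}-(a),(c) turns $\mcE^{(n+k)}(\wt u^{(n)}_x,\psi)$ into a pairing of $g^{(n)}\circ\Psi_5$ with a function that equals $1$ on $S_2$-copies and $0$ on $S_1$-copies inside the cells $\{5\}\times\{1,8,4\}^{k-1}$. By harmonicity of $f^{(n)}$ this equals the flux of $g^{(n)}$ through one quarter, and the quarter fluxes are all equal to $1$ by reflection symmetry and the normalisation $\mcE^{(n)}(g^{(n)},f^{(n)})=4$. The $-\wt u^{(n)}_x\circ\mcG_x$ part is treated symmetrically. The constant bookkeeping (which copies of $S_1,S_2$ lie in the folded cells) has to be checked against Figure \ref{Fig4}.

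\emph{Identity \eqref{lemma4.7.1} for $u^{(n)}_d$: the main obstacle.} By Lemma \ref{lemma46}-(a), $\mcE^{(n+k)}(u^{(n)}_d,f)$ depends only on $f|_{L_2\cup L_3}$. For $f$ supported in a small neighbourhood of $L_2$ away from $L_3\cup L_4$, I would unfold both $\mcE^{(n+k)}(u^{(n)}_d,f)$ and $\mcE^{(n+k)}(u^{(n)}_x,f)$ to level $n+1$ by Lemma \ref{lemma35}-(c). The aim is to check that, near the relevant face, the folded restrictions of $g^{(n)}\circ\Psi_1$ and $g^{(n)}\circ\Psi_5$ coincide: the segment $S_2$ and its reflections cross the same sequence of folded level-$k$ cells in both pictures. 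This gives agreement of the two pairings for all $f$ vanishing near $L_3$, and the same argument with $u^{(n)}_y$ handles $f$ vanishing near $L_2$.

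The delicate point is the common corner $q_3$, which has positive capacity since $d_w>d_f$. I would split a general $f$ as $f(q_3)\mathbf 1+(f-f(q_3))$. Total flux zero, $\mcE^{(n+k)}(u^{(n)}_d,\mathbf 1)=0=\Theta_2\1-\Theta_3\1$, disposes of the constant part. The remainder vanishes at $q_3$, so it can be approximated in $\mcF$ by functions vanishing near $q_3$, which split by a cutoff into pieces supported near $L_2$ and near $L_3$. The identity then extends to all $f$ by boundedness of the $\Theta_i$.
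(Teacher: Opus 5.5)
\textbf{There is a genuine gap, and it sits in the step you call the main obstacle.} Your definitions of the $\Theta_i$, part (a), and the identities \eqref{lemma4.7.2}--\eqref{lemma4.7.3} are fine and essentially match the paper. The trouble is the justification you give for \eqref{lemma4.7.1}: it rests on a mechanism that is false.

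\emph{What is wrong.} The folded restrictions of $g^{(n)}\circ\Psi_1$ and $g^{(n)}\circ\Psi_5$ do not coincide near the relevant face. Indeed, no copy of $S_1$ or $S_2$ meets the cells $F^{1v}$, $v\in W_{k-1}$, at all.

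\emph{What actually happens.} Take $f$ supported in a thin neighbourhood of $L_2$ and vanishing near $L_3$.
\begin{itemize}
\item On the $u^{(n)}_d$ side, Lemma \ref{lemma35}-(c) gives $\mcE^{(n+k)}(u^{(n)}_d,f)=\mcE^{(n+1)}(g^{(n)}\circ\Psi_1,T)$. Here $T$ consists of two kinds of pieces. There are tents along the interior odd vertical level-$(k-1)$ lines; these pair to zero because $g^{(n)}\circ\Psi_1$ is harmonic in $F\setminus(L_2\cup L_3)$. There is also a half-tent on the cell-$1$ side of the segment $\{1/3\}\times[0,1/3]$, and only this piece contributes.
\item On the $u^{(n)}_x$ side, only the term $-\wt u^{(n)}_x\circ\mcG_x$ is active near $L_2$. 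Hence $\mcE^{(n+k)}(u^{(n)}_x,f)=-\mcE^{(n+k)}(\wt u^{(n)}_x,f\circ\mcG_x)$. This unfolds to minus the pairing of $g^{(n)}\circ\Psi_5$ with a half-tent on the cell-$5$ side of the same segment.
\end{itemize}
These are pairings with two different functions, namely $g^{(n)}$ restricted to the two sides of the segment. They agree for the following reason. The two half-tents have the same trace on the segment (both are $f|_{L_2}$, rescaled and reflected in $y$). So they glue to a function $\Xi\in\mcF$ supported near $\{1/3\}\times[0,1/3)$, away from all eight segments. Then $\mcE^{(n)}(g^{(n)},\Xi)=0$ by harmonicity, and the antisymmetrisation supplies the correct sign. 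The missing idea is therefore flux conservation of $g^{(n)}$ across the edges that $\Psi_1(F)$ shares with $\Psi_5(F)$ and $\Psi_8(F)$. Without it, your step has no valid argument.

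\emph{How the paper uses this idea.} This is exactly the key idea of the paper's proof. The paper builds a single test function $g=\psi\cdot(f\circ\Psi^{-1}_{1\cdot\dot3^{k-1}}\circ\varphi_{1\cdot\dot3^{k-1}})$ around $\Psi_1(F)$. It then decomposes $0=\mcE^{(n)}(g,g^{(n)})$ over the cells $1,5,8$, which yields \eqref{lemma4.7.1} for every $f\in\mcF$ at once. In particular, the corner $q_3$ needs no separate treatment.

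\emph{What survives once the gap is filled.} If you replace your mechanism with this gluing argument, your face-by-face version also works, but it is longer. That includes the reduction at $q_3$: subtract $f(q_3)$, use $\Theta_2\1=\Theta_3\1$, and approximate functions vanishing at $q_3$ by functions vanishing near it. Your direct proof of (c), computing $\Theta_2\1$ and $\Theta_3\1$ as quarter fluxes of $g^{(n)}$ through $S_2$ and $S_1$, is a legitimate alternative. The paper instead derives (c) from (b). Note one point in your (c): with $\psi\equiv1$ on $[1/2,1]\times[0,1]$, the $\wt u^{(n)}_x\circ\mcG_x$ term contributes nothing, so it must not be counted again.

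\emph{A minor point on boundedness.} Your worry that $\|f\|_\infty$ is not controlled by the trace norm is unfounded. Since $d_w>d_f$, the oscillation of $f$ is controlled by its energy, and the harmonic extension obeys the maximum principle. So the plain Leibniz bound already shows that the $\Theta_i$ are bounded.
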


\begin{proof}
We first define the functionals $\Theta_i$ for $i=1,2,3,4$ as follows. 
\begin{itemize}
\item For each $u\in \check\mcF_2$,  choose $f\in \mcF$ such that $f|_{L_2}=u$ and $f|_{L_4}=0$, and  define 
\[
\Theta_2(u)=\mcE^{(n+k)}(u^{(n)}_x,f).
\]
Note that by Lemma \ref{lemma46}-(b), $u^{(n)}_x$ is $\mcE^{(n+k)}$-harmonic in $F\setminus (L_2\cup L_4)$, so the value above is independent of the choice of $f$. 

\item For each $u\in \check\mcF_3$,  choose $f\in \mcF$ such that $f|_{L_3}=u$ and $f|_{L_1}=0$, and  define
\[
\Theta_3(u)=-\mcE^{(n+k)}(u^{(n)}_y,f).
\]
Again,   by Lemma \ref{lemma46}-(c), $u^{(n)}_y$ is $\mcE^{(n+k)}$-harmonic in $F\setminus (L_1\cup L_3)$, so the value above is independent of the choice of $f$.

\item Define $\Theta_4(u)=\Theta_2(u\circ\mcG_x)$ and $\Theta_1(u)=\Theta_3(u\circ\mcG_y)$. 
\end{itemize}

(a) follows immediately from the definition. 

(b). The identities \eqref{lemma4.7.2} and \eqref{lemma4.7.3} are easy consequences of  reflection symmetry. For \eqref{lemma4.7.2},  choose $f_1,f_2\in\mcF$ such that 
\begin{align*}
f_1|_{L_2}=f|_{L_2},\ f_1|_{L_4}=0, \quad\hbox{ and }\quad 
f_2|_{L_2}=0,\ f_2|_{L_4}=f|_{L_4}.
\end{align*}
Then, by the definition of $\Theta_2$, we have
\[
\mcE^{(n+k)}(u^{(n)}_x,f_1)=\Theta_2(f|_{L_2}). 
\] 
By the antisymmetry of $u_x^{(n)}$, the definition of $\Theta_2$ and (a),
\begin{align*} 
\mcE^{(n+k)}(u^{(n)}_x,f_2)&=\mcE^{(n+k)}(u^{(n)}_x\circ\mcG_x,f_2\circ\mcG_x)\\
&=-\mcE^{(n+k)}(u^{(n)}_x,f_2\circ\mcG_x)=-\Theta_2(f_2\circ\mcG_x|_{L_2})=-\Theta_4(f|_{L_4}),
\end{align*} 
Since $u^{(n)}_x$ is $\mcE^{(n+k)}$-harmonic in $F\setminus (L_2\cup L_4)$ and $f-f_1-f_2=0$ on $L_2\cup L_4$, we have 
\[
\mcE^{(n+k)}(u^{(n)}_x,f-f_1-f_2)=0.
\]
Summing these three equalities yields \eqref{lemma4.7.2}. The proof of \eqref{lemma4.7.3} follows by the same argument,
so it is omitted. 

\medskip 

To prove \eqref{lemma4.7.1}, we note that 
\begin{enumerate}
\item[(i)] if $f\in\mcF$ satisfies $f|_{F\cap ([1/2,1]\times [0,1])}=0$, then by \eqref{lemma4.7.2}, Lemma \ref{lemma46}-(b) and the strongly local property
\begin{equation}\label{lemma4.7.4}
\begin{split}
-\Theta_4(f|_{L_4})&=\mcE^{(n+k)}(u^{(n)}_x,f)\\
&=\mcE^{(n+k)}(\wt{u}_x^{(n)},f)-\mcE^{(n+k)}(\wt{u}_x^{(n)}\circ\mcG_x,f)=\mcE^{(n+k)}(\wt{u}_x^{(n)},f) ; 
\end{split}
\end{equation}

\item[(ii)] if $f\in\mcF$ satisfies $f|_{F\cap ([0,1]\times[1/2,1])}=0$, then by \eqref{lemma4.7.3}, Lemma \ref{lemma46}-(c) and the strongly local property
\begin{equation}\label{lemma4.7.5}
\begin{split}
\Theta_1(f|_{L_1})&=\mcE^{(n+k)}(u^{(n)}_y,f)\\
&=\mcE^{(n+k)}(\wt{u}_y^{(n)},f)-\mcE^{(n+k)}(\wt{u}_y^{(n)}\circ\mcG_y,f)=\mcE^{(n+k)}(\wt{u}_y^{(n)},f).
\end{split}
\end{equation}
\end{enumerate}
Now, given $f\in\mcF$, we construct a function $g\in \mcF$ in two steps. 
\begin{enumerate}
\item[Step 1.] Let $g_1=f\circ\Psi_{1\cdot {\dot3}^{k-1}}^{-1}\circ\varphi_{1\cdot {\dot{3}}^{k-1}}$. 

\item[Step 2.] Choose a function $\psi\in\mcF$ such that $\psi=1$ on $\Psi_1(F)$ and $\psi(z)=0$ whenever $d(z,\Psi_1(F))\geq 3^{-k}/2$. For notational simplicity, we require that there are $\psi_x,\psi_y\in\mcF$ such that 
\begin{align*} 
\psi\circ\Psi_w=\psi_x\hbox{ for }w\in\{5\}\times\{1,4,8\}^{k-1},\\
\psi\circ\Psi_w=\psi_y\hbox{ for }w\in\{8\}\times\{1,5,2\}^{k-1}.
\end{align*}
Clearly, such functions $\psi_x$ and $\psi_y$ in $\mcF$ always exist. Define $g=\psi\cdot g_1$. 
\end{enumerate}
Recalling the definition of $g^{(n)}$ from the beginning of this subsection, and noting that $g^{(n)}$ is $\mcE^{(n)}$-harmonic on the support of $g$, we have 
\begin{equation}\label{e:4.5}
\begin{split}
0=&\mcE^{(n)}(g,g^{(n)})\\
=&\sum_{i=1,5,8}\mcE^{(n+1)}(g\circ\Psi_i,g^{(n)}\circ\Psi_i)\\
=&\mcE^{(n+1)}(f\circ\Psi_{\dot3^{k-1}}^{-1}\circ\varphi_{\dot3^{k-1}},g^{(n)}\circ\Psi_1)\\
&+\mcE^{(n+1)}\big((f\circ\mcG_x\circ\Psi_{\dot4^{k-1}}^{-1}\circ\varphi_{\dot4^{k-1}})\cdot (\psi\circ\Psi_5),g^{(n)}\circ\Psi_5\big)
\\&+\mcE^{(n+1)}\big((f\circ\mcG_y\circ\Psi_{\dot2^{k-1}}^{-1}\circ\varphi_{\dot2^{k-1}})\cdot(\psi\circ\Psi_8),g^{(n)}\circ\Psi_8\big)\\
=&\mcE^{(n+k)}(f,u^{(n)}_d)+\mcE^{(n+k)}\big((f\circ\mcG_x)\cdot \psi_x,\wt{u}^{(n)}_x\big)+\mcE^{(n+k)}\big((f\circ\mcG_y)\cdot \psi_y,\wt{u}^{(n)}_y\big)\\
=&\mcE^{(n+k)}(f,u^{(n)}_d)-\Theta_4(f\circ\mcG_x|_{L_4})+\Theta_1(f\circ\mcG_y|_{L_1})\\
=&\mcE^{(n+k)}(f,u^{(n)}_d)-\Theta_2(f|_{L_2})+\Theta_3(f|_{L_3}),
\end{split} 
\end{equation}
where the second equality follows from Lemmas \ref{lemma31} and \ref{lemma35}-(a), the fourth equality follows from the same reasoning as in \eqref{e:4.1}, the fifth equality is due to \eqref{lemma4.7.4} and \eqref{lemma4.7.5}. The desired \eqref{lemma4.7.1} follows immediately. \medskip

(c) We pick three functions $f_d,f_x,f_y\in\mcF$ that satisfy 
\begin{align*} 
&f_x\circ\mcG_x|_{L_2}=f_d|_{L_2}\ \hbox{ and }\  f_x|_{F\cap [1/2,1]\times [0,1]}=1,\\
&f_y\circ\mcG_y|_{L_3}=f_d|_{L_3}\ \hbox{ and }\ 
f_y|_{F\cap [0,1]\times [1/2,1]}=0.
\end{align*}
Then we define a function $f$ on $F^A$ with 
\[
A=\big(\{1\}\times W_{k-1})\cup (\{5\}\times\{1,8,4\}^{k-1})\cup (\{8\}\times\{1,5,2\}^{k-1}\big),
\]
by
\[
f(z)=\begin{cases}
f_d\circ\Psi_{\dot3^{k-1}}^{-1}\circ\varphi_{\dot3^{k-1}}\circ\Psi^{-1}_1(z)\quad&\hbox{ for }z\in \Psi_1(F),\\
f_x\circ\Psi_{\dot4^{k-1}}^{-1}\circ\varphi_{\dot4^{k-1}}\circ\Psi^{-1}_5(z)\quad&\hbox{ for }z\in F^w,w\in \{5\}\times\{1,8,4\}^{k-1},\\
f_y\circ\Psi_{\dot2^{k-1}}^{-1}\circ\varphi_{\dot2^{k-1}}\circ\Psi^{-1}_8(z)\quad&\hbox{ for }z\in F^w,w\in \{8\}\times\{1,5,2\}^{k-1},\\
0&\hbox{ elsewhere}. 
\end{cases}
\]
Recall for a word $w$, $F^w:=\Psi_w(F)$ as defined at the beginning of Section \ref{sec2}.  
Note that  $f$ can be extended to a function in $\mcF$. Recall the function $f^{(n)}$ used in the definition of $g^{(n)}$. Since $g^{(n)}$ is $\mcE^{(n)}$-harmonic in the interior of $\bigcup_{w\in A}F^w$ with respect to $(F,d)$, and since $f,f^{(n)}$ share the same boundary values, by symmetry we see that (by Lemma \ref{lemma31} and Proposition \ref{prop32}) 
\[
\mcE^{(n),A}(f|_{F^A},g^{(n)}|_{F^A})=\frac{1}{4}\mcE^{(n)}(f^{(n)},g^{(n)})=1.
\]
On the other hand, by an argument analogous to  \eqref{e:4.5}, we obtain 
\begin{align*} 
&\quad\ \mcE^{(n),A}(f|_{F^A},g^{(n)}|_{F^A})\\
&=\mcE^{(n+k)}(f_d,u^{(n)}_d)+\mcE^{(n+k)}(f_x,\wt u^{(n)}_x)+\mcE^{(n+k)}(f_y,\wt u^{(n)}_y)\\
&=\mcE^{(n+k)}(f_d,u^{(n)}_d)+\mcE^{(n+k)}(f_x,u^{(n)}_x)+\mcE^{(n+k)}(f_y,u^{(n)}_y)\\
&=\big(\Theta_2(f_d|_{L_2})-\Theta_3(f_d|_{L_3})\big)+\big(\Theta_2(f_x|_{L_2})-\Theta_4(f_x|_{L_4})\big)+\big(\Theta_1(f_y|_{L_1})-\Theta_3(f_y|_{L_3})\big)\\
&=\big(\Theta_2(f_d|_{L_2})-\Theta_3(f_d|_{L_3})\big)+\big(\Theta_2\1-\Theta_2(f_d|_{L_2})\big)+\big(\Theta_3(f_d|_{L_3})-\Theta_3(f_y|_{L_3})\big)\\
&=\Theta_2\1,
\end{align*}
where the second equality follows from the strongly local property of $\mcE^{(n+k)}$ similarly to \eqref{lemma4.7.4} and \eqref{lemma4.7.5}, the third from \eqref{lemma4.7.1}-\eqref{lemma4.7.3}, and the last from the fact $f_y|_{L_3}=0$.
Combining the two expressions yields the desired identity $\Theta_2\1=1$. The identity $\Theta_3\1=1$ follows from the observation that $0=\mcE^{(n+k)}(u^{(n)}_d,\1_F)=\Theta_2\1-\Theta_3\1$ where $\1_F$ is the constant function with value $1$ on $F$. The identity $\Theta_1\1=\Theta_4\1$ follows immediately from the definition.
\end{proof}

\begin{proposition}\label{lemma48}
Let ${\bf c}=(c_1,c_2,c_3,c_4)\in\R^4$ satisfy $c_1+c_2+c_3+c_4=0$. Then there exists  $u_{\bf c}^{(n)}\in \operatorname{span}\{u_d^{(n)},u_x^{(n)},u_y^{(n)}\}$ such that 
\begin{eqnarray}
\label{e:4.9}
&\displaystyle\mcE^{(n+k)}(u^{(n)}_{\bf c},f)=\sum_{i=1}^4c_i\Theta_i(f|_{L_i}) \quad\hbox{ for all }f\in\mcF,\\ 
\label{e:4.10}
&\displaystyle\mcE^{(n+k)}(u^{(n)}_{\bf c})\leq C\cdot R^{(n)}(S_1,S_2)\sum_{i=1}^4c_i^2,
\end{eqnarray}
for some constant $C>0$ depending on $k$, but independent of $n,\mcE$ and ${\bf c}$. 
\end{proposition}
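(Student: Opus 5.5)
We write $u^{(n)}_{\bf c}$ as a linear combination of the three building bricks and then use Lemma \ref{lemma47}-(b) to identify the coefficients. By Lemma \ref{lemma47}-(b), the three bricks act on $f$ through the boundary functionals with coefficient vectors
\[
u_d^{(n)}\leftrightarrow(0,1,-1,0),\qquad u_x^{(n)}\leftrightarrow(0,1,0,-1),\qquad u_y^{(n)}\leftrightarrow(1,0,-1,0),
\]
where the $i$-th entry is the coefficient of $\Theta_i(f|_{L_i})$. These three vectors are linearly independent in the hyperplane $\{c_1+c_2+c_3+c_4=0\}$, which is three-dimensional, so they span it.

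We therefore seek $\alpha,\beta,\gamma$ with
\[
\alpha(0,1,-1,0)+\beta(0,1,0,-1)+\gamma(1,0,-1,0)=(c_1,c_2,c_3,c_4).
\]
Solving gives $\gamma=c_1$, $\beta=-c_4$ and $\alpha=c_2+c_4$. The third coordinate is then $-\alpha-\gamma=-(c_1+c_2+c_4)=c_3$, which holds because $\sum_i c_i=0$. Set
\[
u^{(n)}_{\bf c}=(c_2+c_4)\,u_d^{(n)}-c_4\,u_x^{(n)}+c_1\,u_y^{(n)}.
\]
By bilinearity of $\mcE^{(n+k)}$ and the three identities \eqref{lemma4.7.1}--\eqref{lemma4.7.3}, $\mcE^{(n+k)}(u^{(n)}_{\bf c},f)=\sum_i c_i\Theta_i(f|_{L_i})$ for every $f\in\mcF$. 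This is \eqref{e:4.9}.

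For the energy bound \eqref{e:4.10}, Cauchy--Schwarz gives
\[
\mcE^{(n+k)}(u^{(n)}_{\bf c})\le 3\Big(\alpha^2\mcE^{(n+k)}(u^{(n)}_d)+\beta^2\mcE^{(n+k)}(u^{(n)}_x)+\gamma^2\mcE^{(n+k)}(u^{(n)}_y)\Big).
\]
Lemma \ref{lemma45} bounds $\mcE^{(n+k)}(u_d^{(n)})$ directly. For $u_x^{(n)}=\wt u_x^{(n)}-\wt u_x^{(n)}\circ\mcG_x$, we use $\mcE^{(n+k)}(a-b)\le 2\mcE^{(n+k)}(a)+2\mcE^{(n+k)}(b)$ together with the reflection symmetry of $\mcE^{(n+k)}$ (Proposition \ref{prop32}-(c)), which gives $\mcE^{(n+k)}(u_x^{(n)})\le 4\mcE^{(n+k)}(\wt u_x^{(n)})\le 16\cdot 3^{k-1}R^{(n)}(S_1,S_2)$. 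The same argument bounds $u_y^{(n)}$. Finally $\alpha^2+\beta^2+\gamma^2\le 2(c_2^2+c_4^2)+c_4^2+c_1^2\le 3\sum_i c_i^2$, so \eqref{e:4.10} holds with a constant $C$ depending only on $k$ (of order $8^{k}$).

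The only step needing care is confirming that the chosen basis spans the zero-sum hyperplane, and the explicit solution above settles this. Everything else follows directly from Lemmas \ref{lemma45} and \ref{lemma47}. The constant $C$ is independent of $n$, of $\mcE$ and of ${\bf c}$, because the constants in Lemma \ref{lemma45} depend only on $k$.
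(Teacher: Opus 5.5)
Your proposal is correct and matches the paper's proof: it uses the same combination $(c_2+c_4)u_d^{(n)}-c_4u_x^{(n)}+c_1u_y^{(n)}$, checks \eqref{e:4.9} through Lemma \ref{lemma47}-(b), and derives \eqref{e:4.10} from Lemma \ref{lemma45}. Your use of reflection symmetry of $\mcE^{(n+k)}$ to pass from the bound on $\wt u_x^{(n)}$ (resp.\ $\wt u_y^{(n)}$) to one on $u_x^{(n)}$ (resp.\ $u_y^{(n)}$) spells out a step the paper leaves implicit.
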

\begin{proof}
By Lemma \ref{lemma47}-(b), one can directly  verify that 
\[u^{(n)}_{\bf c}=-c_4u_x^{(n)}+c_1u_y^{(n)}+(c_2+c_4)u_d^{(n)}
\]
satisfies \eqref{e:4.9}. Moreover, the estimate \eqref{e:4.10} follows from Lemma \ref{lemma45}. 
\end{proof}

\subsection{Proof of Proposition \ref{prop41}-(b)}\label{sec4.2}
In this subsection, we prove Proposition \ref{prop41}-(b). 

\medskip 

First, we introduce a sequence of approximating graphs associated with $F$. 
Recall that $q_5,q_6,q_7,q_8$ are the midpoints of $L_1,L_2,L_3,L_4$, respectively. Let $q_c=(1/2,1/2)$ be the center of $F_0$. Define the \textit{level-$0$ graph} $(\wt V_0, \wt E_0)$ by
\begin{align*} 
\wt{V}_0:=\{q_5,q_6,q_7,q_8,q_c\}\ \hbox{ and }\ \wt{E}_0:=\big\{\{q_i,q_c\}:i=5,6,7,8\big\},
\end{align*} 
and for $n\geq 1$, define the \textit{level-$n$ graph} $(\wt V_n, \wt E_n)$ by
\begin{align*}
\wt{V}_n:=\bigcup_{w\in W_n}\Psi_w(\wt{V}_0)\ \hbox{ and }\ \wt{E}_n:=\big\{\{\Psi_wq_i,\Psi_wq_c\}:\,w\in W_n,\,i=5,6,7,8\big\}. 
\end{align*}
We define a \textit{level-$n$ graph energy} $\wt\mcD_n$ on $(\wt V_n, \wt E_n)$ by 
\begin{align*}
\wt{\mcD}_n(f,g):=\sum_{\{p,q\}\in \wt{E}_n}\big(f(p)-f(q)\big)\big(g(p)-g(q)\big)\quad\hbox{ for }f,g\in l(\wt{V}_n). 
\end{align*}
Let $\wt{R}_n$ denote the effective resistance between subsets $A,B\subset\wt{V}_n$, i.e. 
\[
\wt{R}_n(A,B):=\left(\inf\Big\{\wt{\mcD}_n(f):\,f\in l(\wt{V}_n),\,f|_A=0\hbox{ and }f|_B=1\Big\}\right)^{-1}. 
\]
For simplicity, we write $\wt{R}_n(A,B)=\wt{R}_n(A\cap \wt{V}_n,B\cap\wt{V}_n)$ for $A,B\subset F$. 

Recall that $R_B$ is the effective resistance associated with $(\mcE_B,\mcF)$.

\begin{lemma}\label{lemma49}
There exists a constant $C>0$ such that 
\[
\wt{R}_n(F^A,F^B)\leq C\cdot3^{n(d_w-d_f)}R_B(F^A,F^B)
\]
for $n\geq 1$ and $A,B\subset W_n$ with $F^A\cap F^B=\emptyset$. 
\end{lemma}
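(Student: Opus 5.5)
The inequality is equivalent to the capacity bound
\[
\frac{1}{R_B(F^A,F^B)}\leq C\,3^{n(d_w-d_f)}\,\frac{1}{\wt R_n(F^A,F^B)}.
\]
So the plan is an extension argument, in the spirit of the proof of Proposition \ref{prop41}-(a). Starting from a near-optimal discrete potential $h\in l(\wt V_n)$ with $h=0$ on $F^A\cap\wt V_n$ and $h=1$ on $F^B\cap\wt V_n$, I will build $f\in\mcF$ with $f|_{F^A}=0$, $f|_{F^B}=1$ and
\[
\mcE_B(f)\leq C\,3^{n(d_w-d_f)}\wt\mcD_n(h).
\]
Replacing $h$ by $(h\vee 0)\wedge 1$ does not increase $\wt\mcD_n(h)$, so I may assume $0\leq h\leq 1$.

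\emph{Step 1: values at corner vertices.} The graph $(\wt V_n,\wt E_n)$ only sees the centers and side-midpoints of cells, so I first assign values at the level-$n$ vertices $V_n$. Since $F^A\cap F^B=\emptyset$, no vertex $p\in V_n$ belongs both to a cell in $A$ and to a cell in $B$. I set $h(p):=h(\Psi_wq_i)$ for some midpoint $\Psi_wq_i$ of a cell $F^w\ni p$, taking $w\in A\cup B$ whenever such a cell exists. This keeps the boundary conditions: all midpoints of $A$-cells are $0$ and all those of $B$-cells are $1$.

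The key geometric fact is that the level-$n$ cells containing a given vertex $p$ (at most four) are joined to each other through cells sharing full sides, within at most three cell-steps. If two cells met only at $p$, the remaining quadrants at $p$ cannot both be removed, because a removed square of the carpet occupies exactly one quadrant at each of its corners. Consequently, for every cell $F^w$ and every $p\in\Psi_w(V_0)$, $|h(p)-h(\Psi_wq_c)|^2$ is bounded by a constant times the sum of $(h(x)-h(y))^2$ over the boundedly many edges of $\wt E_n$ lying in cells within graph distance $2$ of $F^w$.

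\emph{Step 2: a fixed template on $F$.} Next I fix, once and for all, functions $\psi_1,\dots,\psi_8,\psi_c\in\mcF$ with the following properties.
\begin{itemize}
\item $\psi_c+\sum_{i=1}^8\psi_i=1$.
\item On each side $L_j$, the only nonzero functions are the two corner functions and the midpoint function of that side; their traces there are fixed "hat" profiles, the same on every side up to the isometries $\mcG_x,\mcG_y$ and rotation.
\item $\psi_c$ vanishes on $\partial_oF$.
\end{itemize}
Such functions exist: take piecewise affine functions on $F_0$ restricted to $F$, which lie in $\mcF$ by, e.g., \cite[Proposition 5.1]{Hino}-type gluing.

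On each cell I then define
\[
f\circ\Psi_w=h(\Psi_wq_c)+\sum_{i=1}^8\big(h(\Psi_wq_i)-h(\Psi_wq_c)\big)\psi_i,
\]
where $q_i$, $i\leq 4$, are the corners and $q_{i}$, $i\geq 5$, the midpoints. On a shared side, the trace depends only on the two endpoint values and the midpoint value, together with a common profile. Hence $f$ is continuous, and it belongs to $\mcF$ by \cite[Proposition 5.1]{Hino}. Moreover $f\equiv0$ on $F^A$ and $f\equiv1$ on $F^B$, because every vertex value used in those cells equals $0$ or $1$ respectively.

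\emph{Step 3: energy bound.} By the local symmetry and self-similarity of $\mcE_B$ (Lemma \ref{lemma31} and Remark \ref{remark33}),
\[
\mcE_B(f)=\sum_w 8^{-n}\mcE_B\big(f|_{F^w}\circ\varphi_w\big)=3^{n(d_w-d_f)}\sum_w\mcE_B(f\circ\Psi_w).
\]
Each term satisfies
\[
\mcE_B(f\circ\Psi_w)\leq C\sum_{i=1}^8\big(h(\Psi_wq_i)-h(\Psi_wq_c)\big)^2 .
\]
By Step 1, the right-hand side is bounded by the local graph energy around $F^w$. Since each edge is counted boundedly often, summing over $w\in W_n$ gives
\[
\mcE_B(f)\leq C\,3^{n(d_w-d_f)}\wt\mcD_n(h),
\]
which is the claim.

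The main obstacle is Step 1: making the corner values consistent while preserving both boundary conditions. This is exactly where the disjointness $F^A\cap F^B=\emptyset$ and the local connectivity of carpet cells around a vertex are used.
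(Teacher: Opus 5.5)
Your architecture matches the paper's: a discrete minimizer, corner values read off from neighbouring midpoints, a fixed template on each cell, and the scaling $\mcE_B(f)=3^{n(d_w-d_f)}\sum_{w}\mcE_B(f\circ\Psi_w)$. However, Step 2 rests on a false claim. Restrictions to $F$ of piecewise affine functions on $F_0$ are \emph{not} in $\mcF$ unless they are constant. By Remark \ref{remark24}, $\mcE_B$ is comparable to a Besov-type norm. Suppose a function is affine with gradient $a\neq 0$ on an open set containing a cell $F^v$. Splitting $F^v$ into subcells of side $\asymp r$ shows that $r^{-d_w-d_f}\iint_{|z_1-z_2|<r}(f(z_1)-f(z_2))^2\,\mu(dz_1)\mu(dz_2)\geq c_v|a|^2r^{2-d_w}$. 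This blows up as $r\to0$ because $d_w>2$ (Remark \ref{remark25}); even the coordinate function $x$ has infinite energy. Your hat profiles force, e.g., $\psi_5$ to be affine with nonzero gradient on a polygon adjacent to $L_1$. Such a polygon contains small cells of $F$, so no template of the kind you describe exists. Citing \cite[Proposition 5.1]{Hino} does not help: it only glues functions already in $\mcF$ cell by cell, and Step 3 needs $\mcE_B(\psi_i)<\infty$.

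The repair is to build the template from $\mcE_B$-harmonic functions, which is what the paper does. Take $u_1,\dots,u_4$ from Lemma \ref{lemma43} for the form $\mcE_B$; they do not depend on $n$, since $\mcE_B^{(n)}=3^{n(d_w-d_f)}\mcE_B$. Set $\psi_i=u_i$ for $i\leq 4$ and $\psi_5=\dots=\psi_8=\psi_c=0$. These functions lie in $\mcF$ and sum to $1$. The trace of $\sum_i c_iu_i$ on each side depends only on the two endpoint coefficients, through profiles that match under $\mcG_x,\mcG_y$. So your Steps 1 and 3 go through unchanged, with only the corner differences (which Step 1 already controls) entering the energy bound.

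The paper's proof differs from this repaired version only cosmetically. It defines the corner values by averaging the adjacent midpoint values, overridden by $0$ and $1$ on $F^A$ and $F^B$, instead of selecting one. It then quotes \eqref{e:prop4.1.1} for $\mcE_B^{(n)}$.

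A small point in Step 1: your argument for the connectivity fact ignores vertices in the interior of a side of a removed square, where the square occupies two adjacent quadrants. The fact still holds, because removed squares have pairwise disjoint closures that avoid $\partial_oF$.
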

\begin{proof}
Let $f\in l(\wt{V}_n)$ be the unique function satisfying 
\[
f|_{F^A\cap\wt{V}_n}=0,\ f|_{F^B\cap \wt{V}_n}=1\ \hbox{ and }\ \wt\mcD_n(f)=1/\wt{R}_n(F^A,F^B).
\]
Define $g\in l(V_n)$ by 
\[
g|_{F^A\cap V_n}=0,\quad g|_{F^B\cap V_n}=1,\]
and
\[g(q)=\frac{ \sum_{\{p\in\wt{V}_n:\,d(p,q)=3^{-n}/2\}}f(p)}{\#\{p\in\wt{V}_n:\,d(p,q)=3^{-n}/2\}}\quad\hbox{ for } q\in V_n\setminus{(F^A\cup F^B)}.
\]
It is straightforward to verify that for any $w\in W_n\setminus (A\cup B)$,  
\[
\sum_{i=1}^4\big(g(\Psi_wq_i)-\frac14\sum_{j=1}^4g(\Psi_wq_j)\big)^2\leq C_1\sum_{v\in W_n\atop F^v\cap F^w\neq\emptyset}\sum_{i=1}^4\big(f(\Psi_vq_{i+4})-f(\Psi_vq_c)\big)^2
\]
for some  constant $C_1>0$ independent of $f$, $w$ and $n$. Summing over all  $w\in W_n$, we obtain
\[
\sum_{w\in W_n}\sum_{i=1}^4\big(g(\Psi_wq_i)-\frac14\sum_{j=1}^4g(\Psi_wq_j)\big)^2\leq 8C_1\wt\mcD_n(f). 
\]
On the other hand, applying \eqref{e:prop4.1.1} to $\mcE^{(n)}_B$, we can construct a function $\wt g\in \mcF$ such that 
\[\wt g|_{F^A}=0,\quad \wt g|_{F^B}=1,
\]
and
\[
\mcE_B(\wt g)\leq \frac{C_2}{R_B^{(n)}(L_2,L_4)}\sum_{w\in W_n}\sum_{i=1}^4\big(g(\Psi_wq_i)-\frac14\sum_{j=1}^4g(\Psi_wq_j)\big)^2
\]
for some constant $C_2>0$ independent of $n$.
Combining this with the above estimate,
we have 
\[
\mcE_B(\wt g)\leq C_3\wt\mcD_n(f)/R^{(n)}_B(L_2,L_4)=C_43^{n(d_w-d_f)}\wt\mcD_n(f)
\]
for some constants $C_3,C_4>0$ independent of $A$, $B$ and $n$. Since $\mcE_B(\wt g)\geq 1/R_B(F^A,F^B)$ and $\wt \mcD_n(f)=1/\wt{R}_n(F^A,F^B)$, the desired inequality follows. 
\end{proof}

Next, we combine Proposition \ref{lemma48} and Lemma \ref{lemma49} to prove the following lemma. 

\begin{lemma}\label{lemma410}
Let $n\geq 1$ and $A,B\subset W_{n+k}$ with $F^A\cap F^B=\emptyset$. There exists a function $f\in L^2(F;\mu)$ such that 
\begin{eqnarray}
&f|_{F^w}\in\mcF^w\quad\hbox{ for all }w\in W_{n+k},\label{e:4.11}\\
&\mcE^{W_{n+k}}{(f,g)}=1\quad\hbox{ for all }g\in\mcF\hbox{ with }g|_{F^A}=0\hbox{ and }g|_{F^B}=1,\label{e:4.12}\\
&\mcE^{W_{n+k}}{(f)}\leq C\cdot 3^{n(d_w-d_f)}R_B(F^A,F^B)R^{(n)}(S_1,S_2),\label{e:4.13}
\end{eqnarray}
for some constant $C>0$ that depends on $k$ but is independent of $n$, $\mcE$, and $f$. 
\end{lemma}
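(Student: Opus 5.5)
Take a discrete unit flow on the level-$n$ graph and realize it cell by cell with the building bricks of Proposition \ref{lemma48}. Two scales are involved. The graph $(\wt V_n,\wt E_n)$ carries the flow, and each level-$(n+k)$ cell $F^w$, $w\in W_{n+k}$, is a rescaled copy on which $\mcE^{(n+k)}$ governs the local piece. I will index the discrete structure by $W_{n+k}$ throughout, i.e.\ use the graph $(\wt V_{n+k},\wt E_{n+k})$. Then Lemma \ref{lemma49} gives a factor $3^{(n+k)(d_w-d_f)}$, which is $3^{n(d_w-d_f)}$ up to a constant depending on $k$.

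First I choose the discrete object. Let $h$ be the optimal potential for $\wt R_{n+k}(F^A,F^B)$, and let $I(p,q)=\wt R_{n+k}(F^A,F^B)\,(h(q)-h(p))$ be the associated unit current on $\wt E_{n+k}$. It is divergence free at every vertex outside $F^A\cup F^B$ and carries total flux $1$ from $A$ to $B$. Its energy is $\sum I^2=\wt R_{n+k}(F^A,F^B)\le C\,3^{(n+k)(d_w-d_f)}R_B(F^A,F^B)$ by Lemma \ref{lemma49}. For a cell $w\notin A\cup B$, set $c^w_i$ equal to the current through the edge $\{\Psi_wq_{i+4},\Psi_wq_c\}$, oriented into the cell across the face $\Psi_wL_i$. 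Kirchhoff's law at the center $\Psi_wq_c$ gives $\sum_i c^w_i=0$. Define $f|_{F^w}=u^{(n)}_{{\bf c}^w}\circ\Psi_w^{-1}$, with sign chosen so that outflow is positive. On cells in $A\cup B$ I also use bricks, taking ${\bf c}$ with an extra source term. Equivalently, I treat $A$ and $B$ as glued into super-nodes and use only the half-edges leaving them. Condition \eqref{e:4.11} holds by Proposition \ref{prop32}-(a), since $u^{(n)}_{\bf c}\in\mcF$.

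Next I verify \eqref{e:4.12}. For $g\in\mcF$, Lemma \ref{lemma35}-(a) gives $\mcE^w(f|_{F^w},g|_{F^w})=\mcE^{(n+k)}(u^{(n)}_{{\bf c}^w},g\circ\Psi_w)=\sum_i c^w_i\,\Theta_i(g\circ\Psi_w|_{L_i})$ by \eqref{e:4.9}. Sum over $w$. Two neighbouring cells $w,v$ share a face, say $\Psi_wL_2=\Psi_vL_4$. By Lemma \ref{lemma47}-(a) the trace functionals agree: $\Theta_2(g\circ\Psi_w|_{L_2})=\Theta_4(g\circ\Psi_v|_{L_4})$, because $\Psi_v^{-1}\circ\Psi_w$ restricted to the face is the reflection $\mcG_x$. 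Conservation of current on the shared edge pair gives $c^w_2=-c^v_4$, so the two contributions cancel. Faces on the outer boundary $\partial_oF$ carry zero current, since the edges $\{\Psi_w q_{i+4},\Psi_wq_c\}$ that stop at $\partial_o F$ end in degree-one vertices. Only the contributions at $F^A$ and $F^B$ survive. There $g$ is constant, so by Lemma \ref{lemma47}-(c) each term equals the current times $0$ or $1$, and the total is $1$, the unit flux into $B$.

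Finally, \eqref{e:4.13} follows from \eqref{e:4.10}: $\mcE^{W_{n+k}}(f)=\sum_w\mcE^{(n+k)}(u^{(n)}_{{\bf c}^w})\le C R^{(n)}(S_1,S_2)\sum_w|{\bf c}^w|^2\le 2C R^{(n)}(S_1,S_2)\,\wt R_{n+k}(F^A,F^B)$. Combining with Lemma \ref{lemma49} gives the claimed bound.

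The main obstacle is the bookkeeping at the boundary. One must handle the cells in $A\cup B$ and the boundary edges so that the telescoping of the $\Theta$ terms yields exactly $1$, and check that the orientation and reflection conventions between adjacent cells match Lemma \ref{lemma47}-(a). Both horizontal and vertical adjacency occur, and both reflections are needed; this is exactly why $\Theta_4$ and $\Theta_1$ were defined through $\mcG_x$ and $\mcG_y$. The simplest treatment of the boundary is to make $f$ identically $0$ on cells in $A\cup B$, and to route all incoming current onto the faces where the neighbouring cells touch $F^A$ and $F^B$. There $g\equiv0$ or $g\equiv1$ on the whole face, so $\Theta_i\1=1$ evaluates the boundary sum to the net current $1$.
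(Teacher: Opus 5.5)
Your proposal is correct and is essentially the paper's proof: the same optimal potential on $(\wt V_{n+k},\wt E_{n+k})$, the same cell vectors ${\bf c}_w$ fed into Proposition~\ref{lemma48}, and the same energy bound via \eqref{e:4.10} and Lemma~\ref{lemma49}. The ${\bf c}_w$ are the edge currents, which sum to zero by Kirchhoff's law at $\Psi_wq_c$, and the difference in \eqref{e:4.12} is only presentational: the paper packages your face-by-face cancellation into the discrete Green identity $\wt\mcD_{n+k}(\wt f,\wt g)=1$ with $\wt g(\Psi_wq_{i+4}):=\Theta_i(g\circ\Psi_w|_{L_i})$, well defined by Lemma~\ref{lemma47}-(a), and this settles your boundary bookkeeping automatically, since the currents vanish on every cell of $A\cup B$ (so no ``source-term'' bricks are needed, and none could exist because $\sum_ic_i=0$ is forced by testing against constants) and on every degree-one face, including faces bordering interior holes and not only those on $\partial_oF$.
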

\begin{proof}
We construct the function $f$ through the following three steps: 
\begin{enumerate}
\item[Step 1.] Choose $\wt{f}_1\in l(\wt{V}_{n+k})$ such that 
\[
\wt{f}_1|_{F^A\cap \wt{V}_{n+k}}=0,\ \wt{f}_1|_{F^B\cap \wt{V}_{n+k}}=1\ \hbox{ and }\  \wt\mcD_{n+k}(\wt{f}_1)=1/\wt{R}_{n+k}(F^A,F^B).
\]
Then define
\[
\wt{f}=\wt{f}_1/\wt\mcD_{n+k}(\wt{f}_1).
\] 
\item[Step 2.] For each $w\in W_{n+k}$, we assign a vector ${\bf c}_w=(c_{w,1},c_{w,2},c_{w,3},c_{w,4})\in \R^4$ by 
\[
c_{w,i}=\wt{f}(\Psi_wq_{i+4})-\wt{f}(\Psi_wq_c)\quad\hbox{ for }i=1,2,3,4.
\]
Note that $\sum_{i=1}^4c_{w,i}=0$ since $\wt f$ is $\wt\mcD_{n+k}$-harmonic at $\Psi_wq_c$.

\item[Step 3.] Apply Proposition \ref{lemma48} to define $f\in L^2(F;\mu)$ such that \eqref{e:4.11} holds and 
\begin{equation}\label{e:4.14}
\mcE^{(n+k)}(f\circ\Psi_w,g)=\sum_{i=1}^4c_{w,i}\Theta_i(g|_{L_i})\quad\hbox{ for all } w\in W_{n+k} \hbox{ and }g\in\mcF. 
\end{equation}
\end{enumerate}

First, we prove \eqref{e:4.13}. Note that 
\[
\sum_{w\in W_{n+k}}\sum_{i=1}^4c_{w,i}^2=\wt{\mcD}_{n+k}(\wt{f})=\wt{R}_{n+k}(F^A,F^B)\leq C_1\cdot 3^{n(d_w-d_f)}R_B(F^A,F^B),
\]
where the inequality follows from Lemma \ref{lemma49}, and $C_1>0$ is a constant depending on $k$ but independent of $n$.
By  Propositions \ref{prop32}-(c) and \ref{lemma48}, we then have
\[
\begin{split} 
\mcE^{W_{n+k}}({f})=\sum_{w\in W_{n+k}}\mcE^{(n+k)}(f\circ\Psi_w)&\leq C_2R^{(n)}(S_1,S_2)\sum_{w\in W_{n+k}}\sum_{i=1}^4c_{w,i}^2\\
&\leq C_33^{n(d_w-d_f)}R^{(n)}(S_1,S_2)R_B(F^A,F^B),
\end{split}
\]
for constants $C_2, C_3>0$ depending on $k$ but independent of $n$.
Thus,  \eqref{e:4.13} holds.

Second, we prove \eqref{e:4.12}. For any $\wt{g}\in l(\wt{V}_{n+k})$ with $\wt{g}|_{F^A\cap\wt{V}_{n+k}}=0$ and $\wt{g}|_{F^B\cap\wt{V}_{n+k}}=1$, we have  
\begin{equation}\label{e:4.15}
\begin{split}
1&=\wt{\mcD}_{n+k}(\wt{f},\wt{f}_1)=\wt{\mcD}_{n+k}(\wt{f},\wt{g})
=\sum_{w\in W_{n+k}}\sum_{i=1}^4c_{w,i}\big(\wt{g}(\Psi_wq_{i+4})-\wt{g}(\Psi_wq_c)\big) \\
& =\sum_{w\in W_{n+k}}\sum_{i=1}^4c_{w,i}\wt{g}(\Psi_wq_{i+4}),
\end{split}
\end{equation}
where the second equality holds because $\wt{f}$ is $\wt\mcD_{n+k}$-harmonic in $\wt{V}_{n+k}\setminus (F^A\cup F^B)$ and $\wt{g}=\wt{f}_1$ on $(F^A\cup F^B)\cap\wt{V}_{n+k}$, and the last equality follows from  $\sum_{i=1}^4c_{w,i}=0$. Now, for any $g\in\mcF$ such that $g|_{F^A}=0$ and  $g|_{F^B}=1$, define $\wt{g}\in l(\wt{V}_{n+k})$ by 
\[
\wt{g}(\Psi_wq_{i+4})=\Theta_i(g\circ\Psi_w|_{L_i}) \quad\hbox{ for }w\in W_{n+k}\hbox{ and }i=1,2,3,4, 
\]
and 
\[
\wt{g}(\Psi_wq_c)=\frac14\cdot {\sum_{i=1}^4 \wt g (\Psi_w q_{i+4})}\quad \hbox{ for }w\in W_{n+k}. 
\]
By Lemma \ref{lemma47}-(a), $\wt{g}$ is well-defined, and by Lemma \ref{lemma47}-(c), $\wt g|_{F^A\cap\wt V_{n+k}}=0$, $\wt g|_{F^B\cap\wt V_{n+k}}=1$.  Then,  
\begin{align*}
\mcE^{W_{n+k}}({f,g})&=\sum_{w\in W_{n+k}}\mcE^{(n+k)}(f\circ\Psi_w,g\circ\Psi_w)\\
&=\sum_{w\in W_{n+k}}\sum_{i=1}^4c_{w,i}\Theta_i(g\circ\Psi_w|_{L_i})\\
&=\sum_{w\in W_{n+k}}\sum_{i=1}^4c_{w,i}\wt{g}(\Psi_wq_{i+4})=1,
\end{align*}
where the first equality is due to Proposition \ref{prop32}-(c), the second uses \eqref{e:4.14}, and the last is due to \eqref{e:4.15}. This establishes \eqref{e:4.12}.
\end{proof}

\begin{corollary}\label{coro411}
Let $n\geq 1$ and $A,B\subset W_{n+k}$ with $F^A\cap F^B=\emptyset$. Then
\[
\mcE(g)\geq \frac{C^{-1}\cdot 3^{n(d_f-d_w)}}{R^{(n)}(S_1,S_2)R_B(F^A,F^B)}
\]
for any $g\in\mcF$ such that $g|_{F^A}= 0$ and $g|_{F^B}= 1$, where $C$ is the same constant as in Lemma \ref{lemma410}.
\end{corollary}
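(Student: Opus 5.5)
This follows from Lemma \ref{lemma410} by Cauchy--Schwarz. Fix $g\in\mcF$ with $g|_{F^A}=0$ and $g|_{F^B}=1$, and let $f\in L^2(F;\mu)$ be the function from Lemma \ref{lemma410}. The form $\mcE^{W_{n+k}}$ is a sum of the nonnegative symmetric bilinear forms $\mcE^w$, $w\in W_{n+k}$. It is defined on $\mcF^{W_{n+k}}$, which contains $f$ by \eqref{e:4.11}. It also contains $g$, since $g|_{F^w}\circ\varphi_w=g\circ\varphi_w\in\mcF$ by local symmetry. Hence Cauchy--Schwarz for nonnegative bilinear forms gives
\[
1=\mcE^{W_{n+k}}(f,g)^2\leq \mcE^{W_{n+k}}(f)\,\mcE^{W_{n+k}}(g),
\]
where the equality on the left is \eqref{e:4.12}.

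Next we identify $\mcE^{W_{n+k}}(g)$ with $\mcE(g)$. Since $g\in\mcF$, Lemma \ref{lemma31}-(b) applies cell by cell: $\mcE^w(g|_{F^w})=\mu_{\<g\>}(F^w)$ for each $w\in W_{n+k}$. By Lemma \ref{lemma31}-(a), the cells overlap only on the boundaries $\partial_oF^w$, which carry no $\mu_{\<g\>}$-mass. Summing therefore gives $\mcE^{W_{n+k}}(g)=\mu_{\<g\>}(F)=\mcE(g)$. The same identity is exactly the local symmetry identity at level $n+k$.

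Combining this with \eqref{e:4.13},
\[
1\leq C\cdot 3^{n(d_w-d_f)}R_B(F^A,F^B)R^{(n)}(S_1,S_2)\,\mcE(g).
\]
Rearranging gives the claimed bound with the same constant $C$. The only point needing care is the identification $\mcE^{W_{n+k}}(g)=\mcE(g)$ for $g\in\mcF$, and Lemma \ref{lemma31} supplies it directly.
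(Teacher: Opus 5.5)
Your proof is correct and follows the paper's argument: you combine Cauchy--Schwarz for $\mcE^{W_{n+k}}$ with \eqref{e:4.12}, use the identity $\mcE^{W_{n+k}}(g)=\mcE(g)$ for $g\in\mcF$ (which the paper uses without comment, and which is just local symmetry at level $n+k$), and then apply \eqref{e:4.13}. Your checks that $g\in\mcF^{W_{n+k}}$ and that this energy identity holds are a correct spelling-out of what the paper leaves implicit.
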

\begin{proof}
Let $f$ be the function constructed in Lemma \ref{lemma410}. By \eqref{e:4.12} and the Cauchy-Schwarz inequality,
\begin{align*}
\sqrt{\mcE^{W_{n+k}}{(f)}}\sqrt{\mcE( g)}
=\sqrt{\mcE^{W_{n+k}}{(f)}}\sqrt{\mcE^{W_{n+k}}{(g)}}\geq\mcE^{W_{n+k}}{(f,g)}=1.
\end{align*}
The desired estimate then follows from \eqref{e:4.13}. 
\end{proof}

By taking a limit in Corollary \ref{coro411}, we obtain an analogous inequality for any disjoint closed $A,B\subset F$. 
\begin{lemma}\label{lemma412}
Let $A,B$ be two disjoint closed subsets of $F$. Then
\[
\mcE(g)\geq C^{-1}\cdot\limsup_{n\to\infty}\frac{3^{n(d_f-d_w)}}{R^{(n)}(S_1,S_2)}\cdot \frac1{R_B(A,B)}
\]
for any $g\in\mcF\cap C(F)$ such that $g|_{A}\leq 0$ and $g|_{B}\geq 1$, where $C$ is the same constant as in Lemma \ref{lemma410}.
\end{lemma}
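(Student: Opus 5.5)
The plan is to approximate the pair $A,B$ by unions of cells and apply Corollary \ref{coro411}, with a truncation step to handle the inequality constraints. First reduce to the case $g|_A=0$ and $g|_B=1$. Replace $g$ by $\bar g:=(g\vee 0)\wedge 1$. Since $(\mcE,\mcF)$ is Markovian, $\bar g\in\mcF\cap C(F)$ and $\mcE(\bar g)\le\mcE(g)$. Also $\bar g=0$ on $A$ and $\bar g=1$ on $B$, so it suffices to prove the bound for $\bar g$.

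Next fix $\epsilon\in(0,1/4)$ and enlarge $A,B$ to cell unions on which $\bar g$ is nearly $0$ or $1$. By continuity, the sets $U_\epsilon:=\{\bar g<\epsilon\}\supset A$ and $V_\epsilon:=\{\bar g>1-\epsilon\}\supset B$ are relatively open in $F$. Choose $m$ large and let $A_m:=\{w\in W_m:F^w\cap A\neq\emptyset\}$ and $B_m:=\{w\in W_m:F^w\cap B\ne\emptyset\}$. Because cells of level $m$ have diameter $\sqrt2\,3^{-m}$ and $A,B$ are compact, for $m$ large we have $F^{A_m}\subset U_\epsilon$ and $F^{B_m}\subset V_\epsilon$, which are disjoint. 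Now set $h:=\big((\bar g-\epsilon)\vee0\wedge(1-2\epsilon)\big)/(1-2\epsilon)$. This is a normal contraction of $\bar g$ scaled by $(1-2\epsilon)^{-1}$, so $h\in\mcF$, $h=0$ on $F^{A_m}$, $h=1$ on $F^{B_m}$, and $\mcE(h)\le(1-2\epsilon)^{-2}\mcE(\bar g)$.

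Apply Corollary \ref{coro411} with $n=m-k$ (any $m\ge k+1$) to $h$. Since $A\subset F^{A_m}$, $B\subset F^{B_m}$ and effective resistance decreases under enlargement of the sets, $R_B(F^{A_m},F^{B_m})\le R_B(A,B)$. This gives, for every $n\ge n_0(\epsilon)$,
\[
(1-2\epsilon)^{-2}\mcE(g)\ \ge\ \frac{C^{-1}3^{n(d_f-d_w)}}{R^{(n)}(S_1,S_2)\,R_B(A,B)}.
\]
The choice of $m$ only requires $m$ to be large, so the inequality holds for all large $n$. Taking $\limsup_{n\to\infty}$ and then $\epsilon\downarrow0$ yields the claim. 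Note that $R_B(A,B)<\infty$ and is positive, since $\mcF\subset C(F)$ and disjoint compact sets can be separated by functions in $\mcF$; if $R_B(A,B)=\infty$ the statement is trivial.

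The main point needing care is the uniform choice of $m$ for which the cell-neighborhoods of $A$ and $B$ sit inside the level sets of $\bar g$. This follows from uniform continuity of $\bar g$ on compact $F$ together with the fact that $\operatorname{dist}(A,F\setminus U_\epsilon)>0$, which holds because $A\subset U_\epsilon$ with $A$ compact and $U_\epsilon$ open. Everything else is monotonicity of resistance and the Markov property.
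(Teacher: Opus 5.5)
Your proof is correct, and its skeleton matches the paper's. Both approximate $A$ and $B$ by unions of $(n+k)$-cells on which $g$ is within $\epsilon$ of $0$ and $1$. Both then truncate and rescale $g$ so that it equals $0$ and $1$ there, at the cost of a factor $(1-2\epsilon)^{-2}$. Both apply Corollary \ref{coro411}, take $\limsup_{n}$, and let $\epsilon\downarrow 0$.

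The genuine difference is the final step. The paper keeps $1/R_B(A_n,B_n)$ on the right-hand side. It then spends the second half of its proof showing $R_B(A_n,B_n)\to R_B(A,B)$ for nested cell approximations, using weak compactness of the equilibrium potentials, harmonicity, and Mazur's lemma on Ces\`aro means. You instead note that $A\subset F^{A_m}$ and $B\subset F^{B_m}$ already give $R_B(F^{A_m},F^{B_m})\le R_B(A,B)$, since more boundary constraints can only raise the minimal energy. That is exactly the direction the inequality needs, so the convergence of resistances, while true, is superfluous for this lemma. Your route is shorter and avoids any compactness argument. The paper's extra step only shows that the cell approximation loses nothing asymptotically, which this lemma does not use.

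Two harmless remarks. First, your preliminary truncation to $(g\vee 0)\wedge 1$ is redundant, because the second truncation already handles $g\le 0$ on $A$ and $g\ge 1$ on $B$. Second, separating $A$ and $B$ by a function of $\mcF$ shows $R_B(A,B)>0$, not $R_B(A,B)<\infty$. Finiteness for nonempty $A,B$ follows from the H\"older-type estimate for functions in $\mcF$. As you note, though, the case $R_B(A,B)=\infty$ is trivial anyway.
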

\begin{proof}
For sufficiently large $n\in\mathbb{N}$, say $n\geq n_0$, we choose $\{A_n,B_n\}$ to be the unions of $(n+k)$-cells such that  $A_{n_0}\supset A_{n_0+1}\supset A_{n_0+2}\supset\cdots$, $B_{n_0}\supset B_{n_0+1}\supset B_{n_0+2}\supset\cdots$,  with $A=\bigcap_{n\geq n_0}A_n$, $B=\bigcap_{n\geq n_0}B_n$, and $A_{n_0}\cap B_{n_0}=\emptyset$. 

Given any small $\delta>0$, the continuity of $g$ implies that there exists $N\geq n_0$ such that for all $n\geq N$, 
\begin{align*}
g|_{A_n}\leq \delta\ \hbox{ and }\ g|_{B_n}\geq 1-\delta.
\end{align*}
Define
\[
h=(\frac{g-\delta}{1-2\delta}\vee 0)\wedge 1,
\]
so that $h|_{{A_n}}=0$, $h|_{{B_n}}=1$ and $\mcE(h)\leq (1-2\delta)^{-2}\mcE(g)$. Then by Corollary \ref{coro411}, 
\[
(1-2\delta)^{-2}\mcE(g)\geq \mcE(h)\geq C^{-1}\frac{3^{n(d_f-d_w)}}{R^{(n)}(S_1,S_2)}\cdot \frac1{R_B(A_n,B_n)}.
\]
It follows that
\[
\mcE(g)\geq (1-2\delta)^2C^{-1}\limsup_{n\to\infty} \frac{3^{n(d_f-d_w)}}{R^{(n)}(S_1,S_2)}\cdot \frac1{R_B(A_n,B_n)}.
\]
Since $\delta$ is arbitrary, we conclude that
\[
\mcE(g)\geq C^{-1}\limsup_{n\to\infty} \frac{3^{n(d_f-d_w)}}{R^{(n)}(S_1,S_2)}\cdot \frac1{R_B(A_n,B_n)}.
\] 

It remains to prove that $\lim\limits_{n\to\infty}R_B({A_n},{B_n})=R_B(A,B)$. Let $h_n\in\mcF$ be such that 
\[
h_n|_{A_n}=0,\ h_n|_{B_n}=1,\hbox{ and } \mcE_B(h_n)=1/R_B(A_n,B_n).
\]
There exists a subsequence $h_{n_k}$  that weakly converges to some $h\in \mcF$ in the sense that 
\[
\mcE_B(h_{n_k},g)\to \mcE_B(h,g)\ \hbox{ and }\ \int_F h_{n_k}gd\mu\to \int_F hgd\mu\ \hbox{ as }k\to\infty\quad\hbox{ for all } g\in\mcF. 
\]
In particular, $h|_A=0$, $h|_B=1$, and $h$ is $\mcE_B$-harmonic in $F\setminus(A\cup B)$. Consequently, $\mcE_B(h)=1/R_B(A,B)$. Moreover, by Mazur's lemma, a subsequence of the Ces\`aro means $\frac1L\sum_{l=1}^{L}h_{n_l}$ converges strongly in $\mcF$ to $h$ as $L\to\infty$. Note that
\begin{align*}
\mcE_B(\frac1L\sum_{l=1}^{L}h_{n_l})&=\frac1{L^2}\Big(\sum_{l=1}^L\mcE_B(h_{n_l})+2\sum_{l=2}^L\sum_{j=1}^{l-1}\mcE_B(h_{n_l},h_{n_j})\Big)\\
&=\sum_{l=1}^L\frac{2l-1}{L^2}\mcE_B(h_{n_l})=\sum_{l=1}^L\frac{2l-1}{L^2}\frac{1}{R_B(A_{n_l},B_{n_l})},
\end{align*} 
where we use the fact that $\mcE_B(h_{n_l}, h_{n_j})=\mcE_B(h_{n_l})$ for $j<l$ due to the harmonicity and boundary values. Therefore,
\[
\frac1{R_B(A,B)}=\mcE_B(h)=\lim_{L\to\infty}\sum_{l=1}^L\frac{2l-1}{L^2}\frac{1}{R_B(A_{n_l},B_{n_l})}=\lim_{n\to\infty}\frac1{R_B(A_n,B_n)},
\] 
where the last equality follows from the monotonicity of $R_B(A_n,B_n)$.
\end{proof}

With Lemma \ref{lemma412} in hand, we now prove Proposition \ref{prop41}-(b).

\begin{proof}[Proof of Proposition \ref{prop41}-(b)]
Without loss of generality, we may assume 
\[
\min_F f=0\hbox{ and }\max_F f=1,
\]
by replacing $f$ with $(f-\inf_F f)/(\sup_F f-\inf_F f)$ if necessary. For $L\geq 2$, we rewrite $f$ as 
\[
f=\sum_{l=1}^L f_l\ \hbox{ with }f_l=\big(0\vee(f-\frac{l-1}{L})\big)\wedge \frac1L\quad \hbox{ for }1\leq l\leq L. 
\]
Let $g_{L,l}\in\mcF$ be such that 
\begin{align*} 
g_{L,l}=0\hbox{ on }\{f\leq \frac{l-1}L\},\quad 
g_{L,l}=\frac1L\hbox{ on }\{f\geq \frac{l}L\},\\
\hbox{ and }\quad g_{L,l}\hbox{ is }\mcE_B\hbox{-harmonic in }F\setminus \big\{f\leq \frac{l-1}L\hbox{ or }f\geq \frac{l}L\big\}. 
\end{align*}
Then, by Lemma \ref{lemma412}, 
\[
\mcE(f_l)\geq C^{-1}\limsup_{n\to\infty}\frac{3^{n(d_f-d_w)}}{R^{(n)}(S_1,S_2)}\mcE_B(g_{L,l}),
\]
where $C$ is the same constant as in Lemma \ref{lemma410}.
By the strong local property, we have 
\begin{align*} 
\mcE(f)=\sum_{l=1}^L\mcE(f_l)&\geq C^{-1}\limsup_{n\to\infty}\frac{3^{n(d_f-d_w)}}{R^{(n)}(S_1,S_2)}\sum_{l=1}^L\mcE_B(g_{L,l})\\
&=C^{-1}\limsup_{n\to\infty}\frac{3^{n(d_f-d_w)}}{R^{(n)}(S_1,S_2)}\mcE_B(\sum_{l=1}^L g_{L,l}). 
\end{align*}
Since $\sum_{l=1}^L g_{L,l}$ converges uniformly to $f$ as $L\to\infty$, the desired inequality follows from the lower-semicontinuity of $\mcE_B$.
\end{proof}

\section{Uniqueness}
In this section, we prove Theorem \ref{thm2}. 
The main step is to show
  that there exists a constant $C\in(1,\infty)$, independent of $(\mcE,\mcF)$, such that 
\begin{equation}\label{eqn51}
\frac{\sup(\mcE|\mcE_B)}{\inf(\mcE|\mcE_B)}\leq C. 
\end{equation}
 We use a proof by contradiction argument.

\smallskip

It is known from \cite[Remark 5.4]{BB3} that $3^{d_w-d_f}\leq 3/2$. In the next lemma, we show $3^{d_w-d_f}<3/2$. This strict inequality is needed later in our proof of the uniqueness theorem, Theorem \ref{thm2}.

\begin{lemma}\label{lemma51}
$3^{d_w-d_f}<3/2$, that is $d_w<1+\frac{\log4}{\log3}$. 
\end{lemma}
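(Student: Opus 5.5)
The plan is to obtain the strict inequality from a Thomson-principle (test-flow) argument. It improves the classical cutting bound $3^{d_w-d_f}\le 3/2$ of \cite[Remark 5.4]{BB3} by a fixed factor at some finite level $m$, and then propagates this improvement through all scales by a submultiplicativity estimate. The argument concerns only $(\mcE_B,\mcF)$, so I will work with effective resistances that are comparable to $3^{n(d_f-d_w)}$, possibly up to constants. For concreteness I take $\rho_n:=1/R_B^{(0)}$-type quantities realized through the pre-carpet resistances $R_n:=R^{(1)}_{F_n}(L_2,L_4)$. By \cite{BB3} and the comparison used in the proof of Lemma \ref{lemma49}, these satisfy $c^{-1}\,3^{n(d_w-d_f)}\le R_n/R_0\le c\,3^{n(d_w-d_f)}$. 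Consequently
\[
3^{d_w-d_f}=\lim_{n\to\infty}(R_n/R_0)^{1/n}.
\]

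The first step is to recall why $3/2$ appears. Cut $F_{n+1}$ along the horizontal lines $y=1/3$ and $y=2/3$. This can only increase resistance (Rayleigh monotonicity). After the cut, the middle row carries no current between $L_4$ and $L_2$, because the central hole blocks it. The top and bottom rows are each three scaled copies of $F_n$ in series, with resistance $3R_n$ each, and the two rows are in parallel. Hence $R_{n+1}\le \tfrac32 R_n$.

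The second step is to exploit the middle row, which the cut wastes. I will work at level $m=2$ (or larger if necessary). Instead of cutting, I will write down an explicit unit flow from $L_4$ to $L_2$ on the level-$m$ cell structure. The flow routes a small amount $\varepsilon$ of current through the middle-row cells $\Psi_8(F)$ and $\Psi_6(F)$, entering and leaving them vertically through their shared faces with the corner cells. Inside each level-$m$ cell, the flow is a superposition of standard flows between faces. Between opposite faces these have energy $R_{n}$; between adjacent faces (the ``corner-turning'' flows) their energy is bounded by a constant multiple of $R_n$, by the same reasoning as the $S_1$–$S_2$ flows in Lemma \ref{lemma45}. This gives a recursion $R_{n+m}\le a_m(\varepsilon) R_n$. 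Here $a_m(\varepsilon)$ is the energy of a finite network flow, namely a quadratic in $\varepsilon$ of the form $(3/2)^m-\alpha\varepsilon+O(\varepsilon^2)$ with $\alpha>0$. The linear term is negative because the cut flow is not a critical point of the network energy: the middle cells have potential difference across their vertical faces under the cut-flow potential, by the asymmetry of the rows.

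Choosing $\varepsilon$ small then gives $a_m<(3/2)^m$. Iterating, $R_{jm}\le a_m^j R_0$, so $3^{d_w-d_f}\le a_m^{1/m}<3/2$. This is equivalent to $d_w<d_f+\log(3/2)/\log 3=1+\log4/\log3$.

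The main obstacle is the second step. Gluing face-to-face flows inside each cell requires the combined flow to be a genuine divergence-free flow of controlled energy, and the constant in front of $\varepsilon^2$ (from corner-turning flows with possibly large energy) must not swamp the linear gain. I would first try to handle this exactly as in Section \ref{sec4.3}: use the reflection-symmetric brick functions, whose boundary fluxes are governed by the functionals $\Theta_i$ of Lemma \ref{lemma47}, so that the matching conditions become linear conditions on the coefficient vectors ${\bf c}_w$. The linear-in-$\varepsilon$ computation then reduces to checking one explicit sign on the finite level-$m$ cross graph $(\wt V_m,\wt E_m)$.
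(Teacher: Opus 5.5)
There is a genuine gap, and it is exactly the step you call the ``main obstacle.'' The strict gap below $3/2$ is the whole content of the lemma, and Step 2 does not produce it.

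On the pre-carpets you need $R_{n+m}\le a_mR_n$ with $a_m<(3/2)^m$ \emph{independent of $n$}. Non-optimality of the cut flow in the uncut network only gives, for each fixed $n$, a strict inequality with an $n$-dependent gain. That says nothing about $\lim_n(R_n/R_0)^{1/n}$.

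Nor is $a_m(\varepsilon)$ the energy of a fixed finite network. The first-order coefficient is the cross term $2\langle J_0,J_1\rangle=2\int_{\partial}\phi_0\,(J_1\cdot n)$. It is governed by the trace of the cut potential $\phi_0$ on the faces where the detour enters and leaves, not by the energies of face-to-face flows. So you need $\alpha_n\ge cR_n$ and $E(J_1)\le CR_n$ uniformly in $n$, and neither is addressed.

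The tool you propose cannot supply these bounds:
\begin{itemize}
\item Lemma~\ref{lemma45} and Proposition~\ref{lemma48} give only upper bounds of the form $C\,R^{(n)}(S_1,S_2)$, with crude constants such as $4\cdot 8^{k-1}$.
\item The bricks are built for a form on $F$, not for $R^{(1)}_{F_n}$.
\item Nothing makes $u^{(n)}_x$ the energy-minimizing horizontal current, since its boundary flux $\Theta_2$ is dictated by $g^{(n)}$.
\end{itemize}
A flow assembled from bricks therefore has no reason to have zeroth-order energy exactly $(3/2)^mR_n$, and that sharp constant is the whole point.

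Your sign heuristic is also off. The two rows are exchanged by $\mcG_y$, so the cut potential has identical traces on the two faces that the middle-left cell shares with the cells below and above it. A vertical through-flow therefore gives no definite first-order gain. What actually destroys optimality is that the middle cells touch the terminals $L_4$ and $L_2$.

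The paper avoids all of this. It works with $\mcE_B$ itself, whose exact self-similarity $\mcE_B^{(1)}=3^{d_w-d_f}\mcE_B$ means one level suffices and no uniformity in $n$ is needed. It also uses the primal (Dirichlet-principle) form of the cut argument rather than Thomson's principle. Take $h$ harmonic with $h|_{L_4}=0$ and $h|_{L_2}=1$. On the six row cells $A=\{1,2,3,4,5,7\}$ set $h_1=\frac13(h+j)\circ\Psi_i^{-1}$, with $j=0,1,2$ according to the column. The antisymmetry $h\circ\mcG_x=1-h$ gives the orthogonality $\sum_{i\in A}\mcE^i_B\big((h-h_1)|_{F^i},h_1|_{F^i}\big)=0$, and hence
\[
\Big(1-\tfrac23\,3^{d_w-d_f}\Big)\mcE_B(h)=\sum_{i=6,8}\mcE^i_B(h|_{F^i})+\sum_{i\in A}\mcE^i_B\big((h-h_1)|_{F^i}\big).
\]
The right side is positive. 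If $h=h_1$ on $F^A$, then $h|_{F^8}$ vanishes on $L_4$ but has the nonconstant trace $\frac13h(3x,1)$ on its bottom face.

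If you want to keep your dual picture on the pre-carpets, a workable version is a single circulation $L_4\to\Psi_8(F_n)\to\Psi_1(F_n)\to L_4$. Build it from one corner-turning unit current $K$, running from $L_4$ to the right half of $L_3$, together with its mirror image across $y=1/3$; the gluing is then automatic. The first-order term is at most $-\frac12\varepsilon R_n$, because the horizontal harmonic function is $\ge\frac12$ on the right half by antisymmetry and the maximum principle. You would still have to import a uniform corner-crossing bound $E(K)\le CR_n$ of Barlow--Bass type. That is precisely the kind of input the paper's limit-form argument does not need.
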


\begin{proof}
Recall that for any $n\geq 0$ and $f\in\mcF$ (see Remark \ref{remark33}), 
\[
\mcE^{(n)}_B(f)=3^{n(d_w-d_f)}\mcE_B(f). 
\]
We focus on the cases $n=0,1$. Let $h\in\mcF$ satisfy 
\[
h|_{L_4}=0,\ h|_{L_2}=1,\hbox{ and }\mcE_B(h)=1/{R_B(L_2,L_4)}. 
\]
Define $A=\{1,2,3,4,5,7\}$, which corresponds to two rows of $1$-cells attached to $L_1$ and $L_3$, and define $h_1\in C(F^A)$ by 
\[
h_1=\frac13\Big(h\circ\Psi_1^{-1}+h\circ\Psi_4^{-1}+(h+1)\circ\Psi_5^{-1}+(h+1)\circ\Psi_7^{-1}
+(h+2)\circ\Psi_2^{-1}+(h+2)\circ\Psi_3^{-1}\Big).
\]
We can verify that (by Proposition \ref{prop32}-(c) and Remark \ref{remark33})
\begin{align*}
&\sum_{i\in A}\mcE^i_B\big((h-h_1)|_{F^i},h_1|_{F^i}\big)\\=&3^{d_w-d_f}\sum_{i\in A}\mcE_B\big((h-h_1)\circ\Psi_i,h_1\circ\Psi_i\big)\\
=&3^{d_w-d_f-1}\sum_{i\in A}\mcE_B\big((h-h_1)\circ\Psi_i,h\big)\\
=&3^{d_w-d_f-1}\Big(\sum_{i=1,2,3,4}\mcE_B\big((h-h_1)\circ\Psi_i,h\big)-\sum_{i=5,7}\mcE_B\big((h-h_1)\circ\Psi_i\circ\mcG_x,h\big)\Big)\\
=&3^{d_w-d_f-1}\mcE_B\Big(\sum_{i=1,2,3,4}(h-h_1)\circ\Psi_i-\sum_{i=5,7}(h-h_1)\circ\Psi_i\circ\mcG_x,h\Big)\\
=&0,
\end{align*}
where the third equality holds because  $h\circ\mcG_x=-h+1$, and the last equality holds since $\sum_{i=1,2,3,4}(h-h_1)\circ\Psi_i-\sum_{i=5,7}(h-h_1)\circ\Psi_i\circ\mcG_x$ vanishes on $L_2\cup L_4$, and $h$ is $\mcE_B$-harmonic in $F\setminus (L_2\cup L_4)$. 
This  implies, in particular (by Lemma \ref{lemma31}), 
\[
\mcE_B(h)-6\cdot 3^{d_w-d_f-2}\mcE_B(h)=\mcE_B(h)-\sum_{i\in A}\mcE_B^i(h_1|_{F^i})=\sum_{i=6,8}\mcE_B^i(h|_{F^i})+\sum_{i\in A}\mcE_B^i\big((h-h_1)|_{F^i}\big)>0,
\]
where the inequality can be seen by considering two cases: if $h-h_1\neq 0$ on $\bigcup_{i\in A}F^i$, then $\sum_{i\in A}\mcE^i_B\big((h-h_1)|_{F^i}\big)>0$; if $h-h_1= 0$, then $h|_{F^6\cup F^8}$ cannot be constant, so $\sum_{i=6,8}\mcE^i_B(h|_{F^i})>0$.   The claim follows immediately from the inequality. 
\end{proof}

\begin{lemma}\label{lemma52}
Let $h$ be the function satisfying 
\[
h|_{L_4}=0,\ h|_{L_2}=1,\hbox{ and }h\hbox{ is }\mcE\hbox{-harmonic in }F\setminus(L_2\cup L_4).
\]
Then for $m\geq 1$,
\[
\mu_{\<h\>}\big(F\cap [0,1]\times [0,3^{-m}]\big)\leq 2^{-m}\mcE(h). 
\]
\end{lemma}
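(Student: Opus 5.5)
The plan is to prove the one-step bound $\mu_{\<h\>}(S_m)\le \frac12\mu_{\<h\>}(S_{m-1})$ for every $m\ge1$, where
\[
S_j:=F\cap[0,1]\times[0,3^{-j}] \qquad (S_0=F),
\]
and then iterate. Iteration gives $\mu_{\<h\>}(S_m)\le 2^{-m}\mu_{\<h\>}(F)=2^{-m}\mcE(h)$. The one-step bound will come from a reflection competitor and the variational characterization of $h$: $h$ minimizes $\mcE$ among all $g\in\mcF$ with $g|_{L_4}=0$ and $g|_{L_2}=1$.

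Fix $m\ge1$. The strip $S_{m-1}$ is the union of the bottom row of level-$(m-1)$ cells $F^w$. Each such cell is invariant under its own vertical reflection $\mcG^w_y:=\Psi_w\circ\mcG_y\circ\Psi_w^{-1}$, so the reflection $\sigma$ about the horizontal line $\ell_m:=\{y=\tfrac12 3^{-(m-1)}\}$ maps $S_{m-1}$ onto itself. Let $U$ and $D$ denote the parts of $S_{m-1}$ strictly above and strictly below $\ell_m$. Define the competitor
\[
\wt h:=h\ \hbox{ on } (F\setminus S_{m-1})\cup U\cup \ell_m,\qquad \wt h:=h\circ\sigma\ \hbox{ on } D .
\]
This function is continuous: it is unchanged on the top edge of the strip, and the two definitions agree on $\ell_m$. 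Since $\sigma$ fixes the $x$-coordinate, $\wt h$ keeps the values $0$ on $L_4$ and $1$ on $L_2$.

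To show $\wt h\in\mcF$, I would argue cellwise. On each bottom-row cell $F^w$, $\wt h\circ\Psi_w$ is the $\mcG_y$-symmetrization of the upper half of $h\circ\Psi_w$. Using Proposition \ref{prop32}(a), the reflection symmetry of $\mcE^{(m-1)}$ from Proposition \ref{prop32}(c), and the gluing result \cite[Proposition 5.1]{Hino} already used for $g_n$, the cellwise pieces glue to an element of $\mcF$. For the energy, I would first establish the invariance
\[
\mu^w_{\<f\circ\mcG_y^w\>}\big(\mcG^w_y(B)\big)=\mu^w_{\<f\>}(B)
\]
for the energy measures of $(\mcE^w,\mcF^w)$; this follows from the reflection symmetry of $\mcE^{(m-1)}$ applied in the defining identity \eqref{e:3.1}. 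By strong locality together with Lemma \ref{lemma31}, this yields
\[
\mcE(\wt h)=\mcE(h)-\mu_{\<h\>}(D)+\mu_{\<h\>}(U)+\big(\mu_{\<\wt h\>}(\ell_m)-\mu_{\<h\>}(\ell_m)\big).
\]
Minimality, $\mcE(\wt h)\ge\mcE(h)$, then gives $\mu_{\<h\>}(D)\le\mu_{\<h\>}(U)$ once the line terms are disposed of. Since $S_m\subset D$ and $U\cup D\subset S_{m-1}$, this implies $\mu_{\<h\>}(S_m)\le\mu_{\<h\>}(D)\le\frac12\mu_{\<h\>}(S_{m-1})$.

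The main obstacle is the line $\ell_m$. The coordinate $y=\tfrac12 3^{-(m-1)}$ is never a cell boundary, because $1/2=0.111\ldots$ in base $3$, so Lemma \ref{lemma31}(a) does not apply directly. My first attempt would be to show that $\mu_{\<f\>}(\ell_m)=0$ for every $f\in\mcF$. One route runs through $\ell_m\cap F$ being a set of zero $\mu$-capacity or a porous set, combined with the energy-measure singularity results of \cite{Murugan} under ${\bf HK}(d_w)$. If that is not available, I would perturb the reflection line to $y=t$, with $t$ a level-$N$ cell-boundary height close to $\tfrac12 3^{-(m-1)}$, so that Lemma \ref{lemma31}(a) applies. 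Carpet-invariance of the reflection then fails, however, so this route would require more care. As a fallback, I would instead run the same argument with the symmetric counterpart (upper half replaced by the reflected lower half). Adding the two resulting inequalities, the line contributions should cancel in pairs.
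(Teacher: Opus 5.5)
Your competitor $\wt h$ is exactly the paper's $h_{m-1}$, the reflection of $h$ about the midline of the bottom row of level-$(m-1)$ cells. The rest of your scheme is also the paper's: minimality of $h$, reflection invariance of the cellwise energy measures, then halving and iterating. However, the step you single out as the main obstacle is left unresolved, and none of your proposed fixes works.

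\textbf{The missing observation.} The line term is identically zero for a soft reason. Since $\wt h=h$ on the closed set $K:=F\cap\{y\geq \tfrac12 3^{-(m-1)}\}\supset \ell_m\cap F$, the function $\wt h-h$ vanishes on $K$. For a strongly local regular Dirichlet form, the energy measure of a function does not charge its zero set (energy image density, see \cite{CF}), so $\mu_{\<\wt h-h\>}(K)=0$. The triangle inequality $|\mu_{\<u\>}(A)^{1/2}-\mu_{\<v\>}(A)^{1/2}|\leq\mu_{\<u-v\>}(A)^{1/2}$ then gives $\mu_{\<\wt h\>}=\mu_{\<h\>}$ on every Borel subset of $K$. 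In particular $\mu_{\<\wt h\>}(\ell_m)=\mu_{\<h\>}(\ell_m)$, so your line term vanishes. This is what the paper uses when it asserts that $\mu_{\<h\>}$ and $\mu_{\<h_m\>}$ agree on the \emph{closed} strip $F\cap[0,1]\times[3^{-m}/2,1]$. The reflection step then compares only the open halves $D$ and $U$, so one never needs to know whether $\ell_m$ carries energy.

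\textbf{Why your three routes fail.}
\begin{enumerate}
\item[(1)] A capacity argument cannot work. Since $d_w>d_f$, every point has positive capacity: functions in $\mcF$ are H\"older continuous, with sup norm controlled by $\mcE_1(f)^{1/2}$. Hence the nonempty Cantor set $\ell_m\cap F$ is not polar. Singularity of energy measures with respect to $\mu$ also points the wrong way: it says energy measures are carried by $\mu$-null sets, so it cannot show that a particular $\mu$-null set is uncharged.
\item[(2)] Perturbing the reflection line destroys invariance of $F$, as you note.
\item[(3)] The mirrored competitor keeps $h$ on $D$ and puts $h\circ\sigma$ on $U$. Along $y=3^{-(m-1)}$ it takes the values of $h$ on $L_1$, while it equals $h$ just above that line. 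So it is discontinuous and not in $\mcF\subset C(F)$. Even if it were admissible, it would give the reverse inequality $\mu_{\<h\>}(U)\leq\mu_{\<h\>}(D)+(\hbox{line terms})$. Adding this to the first inequality cancels $\mu_{\<h\>}(D)$ against $\mu_{\<h\>}(U)$ and leaves only a statement about line terms.
\end{enumerate}

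\textbf{A smaller point: membership $\wt h\in\mcF$.} The gluing result \cite[Proposition 5.1]{Hino} works across cell boundaries, but $\ell_m$ is not a cell boundary, as you yourself observe. So it does not justify gluing the two halves inside each bottom-row cell. Use the Besov characterization \cite[Theorem 4.2]{GHL}, as the paper does. The reflection $\sigma$ preserves $\mu$ on $S_{m-1}$, and $|\sigma z_1-z_2|\leq|z_1-z_2|$ whenever $z_1$ lies below $\ell_m$ and $z_2$ above it. Hence the Besov seminorm of $\wt h$ is at most a constant times that of $h$.
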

\begin{proof}
The proof follows along the same line as that for \cite[Lemma 2.6]{BHHW}. For the reader's convenience, we spell out the details.

For $m\geq 1$, we define $h_m$ by 
\[
h_m(x,y):=\begin{cases}
h(x,y)&\hbox{ if }(x,y)\in F\cap [0,1]\times[3^{-m}/2,1],\\
h(x,3^{-m}-y)&\hbox{ if }(x,y)\in F\cap [0,1]\times[0,3^{-m}/2).
\end{cases}
\]
It is easy to see that $h_m\in\mcF$, by using the Besov characterization of $\mcF$ \cite[Theorem 4.2]{GHL}. Note that $h_m|_{L_4}=0$ and $h_m|_{L_2}=1$ still hold, so $\mcE(h_m)\geq \mcE(h)$.  Together with the fact that $\mu_{\<h\>}\big(F\cap[0,1]\times[3^{-m}/2,1]\big)=\mu_{\<h_m\>}\big({ F\cap[0,1]\times[3^{-m}/2,1]}\big)$, we obtain 
\begin{align*} 
\mu_{\<h\>}\big(F\cap [0,1]\times [0,3^{-m-1}]\big)&\leq \mu_{\<h\>}\big(F\cap [0,1]\times [0,3^{-m}/2)\big)\\
&\leq \mu_{\<h_m\>}\big(F\cap [0,1]\times [0,3^{-m}/2)\big)\\
&= \mu_{\<h\>}\big(F\cap [0,1]\times (3^{-m}/2,3^{-m}]\big),
\end{align*}
where the last equality uses the reflection symmetry of $\mcE^{w}$ for each $w\in W_m$.
Thus
\begin{align*}
\mu_{\<h\>}\big(F\cap [0,1]\times [0,3^{-m-1}]\big)\leq \frac12\mu_{\<h\>}\big(F\cap [0,1]\times [0,3^{-m}]\big).
\end{align*} 
The desired conclusion then follows by iteration. 
\end{proof}

\begin{lemma}\label{lemma53}
For $k\geq 2$, with $R^{(n)}_x,R^{(n)}_y, R^{(n)}(S_1,S_2)$ defined as in Section \ref{sec4}, it holds that 
\begin{align}
\label{e:lemma5.3.1}
\frac1{R^{(n)}_x}\leq\frac{6+2^{-k+3}}{9R^{(n+1)}_x}+\frac4{R^{(n)}(S_1,S_2)}\quad\hbox{ for }n\geq 0,\\
\label{e:lemma5.3.2}\frac1{R^{(n)}_y}\leq \frac{6+2^{-k+3}}{9R^{(n+1)}_y}+\frac4{R^{(n)}(S_1,S_2)}\quad\hbox{ for }n\geq 0.
\end{align}
\end{lemma}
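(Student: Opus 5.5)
The plan is to build an explicit competitor for $R^{(n)}_x$ out of the $\mcE^{(n+1)}$-minimizer for $R^{(n+1)}_x$. As in the proof of Lemma \ref{lemma51}, the top and bottom rows of level-$1$ cells get shifted copies of that minimizer. The middle-row cells $F^6,F^8$ get an almost constant function, glued to the copies through thin strips along their top and bottom edges, at a gluing cost controlled by $R^{(n)}(S_1,S_2)$. The estimate \eqref{e:lemma5.3.2} then follows from \eqref{e:lemma5.3.1} with the roles of $x$ and $y$ exchanged, replacing $\mcG_x$ by $\mcG_y$ and $\{1,4,8\}$ by $\{1,5,2\}$.

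\textbf{Setup and the two rows.} Let $h\in\mcF$ satisfy $h|_{L_4}=0$, $h|_{L_2}=1$, $\mcE^{(n+1)}(h)=1/R^{(n+1)}_x$. Then $h$ is $\mcE^{(n+1)}$-harmonic off $L_2\cup L_4$, and $h\circ\mcG_x=1-h$. I would define $g$ on $F^A$, $A=\{1,2,3,4,5,7\}$, by $g=\frac13(h+j)\circ\Psi_i^{-1}$ with $j=0,1,2$ in the three columns, exactly as $h_1$ in Lemma \ref{lemma51}. By Lemma \ref{lemma31} and Proposition \ref{prop32}-(c),
\[
\mcE^{(n),i}(g|_{F^i})=\mcE^{(n+1)}(g\circ\Psi_i)=\tfrac19\mcE^{(n+1)}(h),
\]
so these six cells contribute $\frac{6}{9R^{(n+1)}_x}$. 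This accounts for the leading coefficient. On $L_4$ and $L_2$ the function $g$ equals $0$ and $1$, so it is admissible once $g$ is extended continuously into $F^6\cup F^8$. Since $g$ already has the correct boundary values, this extension is the only remaining task.

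\textbf{Middle cells and the strips.} On $F^8$ (the case of $F^6$ is symmetric via $\mcG_x$), the boundary data are prescribed only on the top and bottom edges: there $g$ coincides with $\frac13 h$ on the corresponding edges of $F^4$ and $F^1$. Along the bottom edge I would keep $\frac13 h\circ\Psi_8^{-1}$ in a strip of relative height $3^{-(k-1)}$, and likewise along the top edge. By Lemma \ref{lemma52} applied to $h$ with $m=k-1$, and by reflection symmetry for the top strip, each strip costs at most $\frac19 2^{-(k-1)}/R^{(n+1)}_x$. The four strips (two in $F^6$, two in $F^8$) together give $\frac{2^{-k+3}}{9R^{(n+1)}_x}$, which is exactly the correction term in \eqref{e:lemma5.3.1}.

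\textbf{Gluing with the corner flow.} In the interior of $F^8$, away from the strips, I want $g$ to be a single value, namely $\frac13 h$ evaluated on the hole side. The transition is where $f^{(n)}$ from Section \ref{sec4.3} enters. The segments $S_1$ lie at height $\frac13+\frac{3^{-k}}2$, just inside the strip boundary of $F^8$ (and, via the reflections, of the other middle cells). The $S_2$ pieces lie at horizontal distance $\frac{3^{-k}}{2}$ from the hole edge. I would set $g=(1-f^{(n)})\,\frac13 h\circ\Psi_8^{-1}+f^{(n)}\cdot c$, or a truncated variant, so that $g$ matches the copy of $h$ on the $S_1$ side, reaches the constant on the $S_2$ side, and continuity across the cell boundaries is preserved.

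\textbf{Main obstacle.} The hard step is bounding the energy of this glued piece by $\frac{4}{R^{(n)}(S_1,S_2)}$ with coefficient exactly $4$ and no cross terms in $\mcE(h)$. I would try to use $\mcE^{(n)}(f^{(n)})\le 4/R^{(n)}(S_1,S_2)$ together with $\|\frac13 h\|_\infty\le 1$. The cross term would be killed by strong locality and by the fact that $\frac13 h$ is nearly constant on the small region where $f^{(n)}$ varies. If this fails, the fallback is to interpolate using $f^{(n)}$ alone, taking $g=f^{(n)}$ suitably rescaled, with boundary values $0$ and $1$ on the reflected copies of $S_1$ and $S_2$. The constant region is then bounded by level sets of $f^{(n)}$, which makes the $4/R^{(n)}(S_1,S_2)$ bound automatic, and the matching to $\frac13 h$ is pushed into the strips already paid for by Lemma \ref{lemma52}.
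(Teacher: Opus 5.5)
Your handling of the six top and bottom cells, and of the four thin strips in $F^6,F^8$ via Lemma \ref{lemma52}, is correct and produces exactly the term $\frac{6+2^{-k+3}}{9R^{(n+1)}_x}$. The gap is the gluing in the middle cells, which you rightly flag as the crux.

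First, the constant is wrong. The left edge $\{0\}\times[\frac13,\frac23]$ of $F^8$ lies in $L_4$, so any competitor for $R^{(n)}_x$ must vanish there. The value on the bulk of $F^8$ therefore has to be $0$ (and $1$ on $F^6$), not ``$\frac13 h$ on the hole side'', which is $\frac13$ (resp.\ $\frac23$). The data on $F^8$ are not prescribed only on its top and bottom edges.

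Second, $g=(1-f^{(n)})\frac13h\circ\Psi_8^{-1}+f^{(n)}c$ has the geometry reversed. The set $F\cap[0,\frac13]\times(\frac13+\frac{3^{-k}}2,\frac23-\frac{3^{-k}}2)$ is cut off from the rest of $F$ by $S_1\cup\mcG_y(S_1)$, so $f^{(n)}\equiv 0$ there. Similarly $f^{(n)}\equiv1$ on $F\cap[\frac13+\frac{3^{-k}}2,\frac23-\frac{3^{-k}}2]\times[0,1]$. Your $g$ is therefore the non-constant copy of $h$ on the bulk of $F^8$, and equals $c$ on most of $F^5\cup F^7$, where the copies of $h$ must sit. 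Moreover, $\frac13h\circ\Psi_8^{-1}$ lives only on $F^8$, while $f^{(n)}$ varies across the corner cells $F^1,F^4$.

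Third, fix the orientation and use $f^{(n)}\wh f+(1-f^{(n)})c$ with $c=0$ on the left half and $1$ on the right half. Even then the Leibniz rule gives the cross term $2f^{(n)}(\wh f-c)$ integrated against the mutual energy measure of $\wh f$ and $f^{(n)}$. This lives on the corner cells, where both functions genuinely vary, so strong locality does not kill it. Cauchy--Schwarz then only gives $(1+\epsilon)\frac{6+2^{-k+3}}{9R^{(n+1)}_x}+\frac{C(1+\epsilon^{-1})}{R^{(n)}(S_1,S_2)}$. That lossy bound would still serve in the proof of Theorem \ref{thm2}, but it is not the stated inequality.

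The missing idea is to glue by lattice truncation rather than multiplication. Put the shifted copies of $h$ in all eight cells, with $\frac13h\circ\Psi_8^{-1}$ and $\frac13(h+2)\circ\Psi_6^{-1}$ on the whole middle cells. This gives $\wh f\in\mcF$ with $0\le\wh f\le1$, $\wh f|_{L_4}=0$ and $\wh f|_{L_2}=1$.

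Let $g_0$ be $0$ on $S_1\cup\mcG_y(S_1)$, $1$ on $S_2\cup\mcG_y(S_2)$, and $\mcE^{(n)}$-harmonic elsewhere. This uses only the left half of the configuration defining $f^{(n)}$, since $f^{(n)}$ itself vanishes in both middle cells. Set $g_1=(1-g_0)\circ\mcG_x$, so that $\mcE^{(n)}(g_0)=\mcE^{(n)}(g_1)\le 2/R^{(n)}(S_1,S_2)$.

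By the same separation argument, $g_0=0$ on the bulk of $F^8$ and $g_0=1$ on $F\cap\{x\ge\frac13+\frac{3^{-k}}2\}$. Likewise $g_1=0$ on $F\cap\{x\le\frac23-\frac{3^{-k}}2\}$ and $g_1=1$ on the bulk of $F^6$. Hence $f=(\wh f\wedge g_0)\vee g_1$ satisfies $f|_{L_4}=0$ and $f|_{L_2}=1$. It equals $0$ on $F\cap[0,\frac13]\times[\frac13+3^{-k},\frac23-3^{-k}]$ and $1$ on its mirror image.

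Since $\mu_{\<u\wedge v\>},\mu_{\<u\vee v\>}\le\mu_{\<u\>}+\mu_{\<v\>}$ with no cross terms, strong locality gives
\[
\frac1{R^{(n)}_x}\le\mcE^{(n)}(f)\le\mu^{(n)}_{\<\wh f\>}\Big(F\setminus[0,1]\times\big[\tfrac13+3^{-k},\tfrac23-3^{-k}\big]\Big)+\mcE^{(n)}(g_0)+\mcE^{(n)}(g_1).
\]
Its first term is exactly your six-cells-plus-four-strips bound. Your fallback about level sets of $f^{(n)}$ points in this direction, but without this min/max construction it is not yet an argument.
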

\begin{proof}
We prove only \eqref{e:lemma5.3.1}, as inequality \eqref{e:lemma5.3.2} follows by the same argument. 

Let $h\in\mcF$ satisfy 
\[
h|_{L_4}=0,\ h|_{L_2}=1,\hbox{ and $h$ is }\mcE^{(n+1)}\hbox{-harmonic in }F\setminus(L_2\cup L_4). 
\]
Let $g_0\in\mcF$ satisfy 
\[
g_0|_{S_1\cup \mcG_y(S_1)}=0,\ g_0|_{S_2\cup \mcG_y(S_2)}=1,\hbox{ and  $g_0$ is }\mcE^{(n)}\hbox{-harmonic in }F\setminus\big(S_1\cup \mcG_y(S_1)\cup S_2\cup\mcG_y(S_2)\big);
\]
and define 
\[
g_1=(1-g_0)\circ\mcG_x. 
\]
Clearly, 
\[
\mcE^{(n+1)}(h)=1/R^{(n+1)}_x\hbox{ and }\mcE^{(n)}(g_0)=\mcE^{(n)}(g_1)\leq2/R^{(n)}(S_1,S_2). 
\]

Now, we construct $f\in\mcF$ in two steps. 
\begin{enumerate}
\item Define 
\begin{align*}
\wh {f}=&\frac13h\circ\Psi_1^{-1}+\frac13h\circ\Psi_8^{-1}+\frac13 h\circ\Psi_4^{-1}\\
&+\frac13(h+1)\circ\Psi_5^{-1}+\frac13(h+1)\circ\Psi_7^{-1}\\
&+\frac13(h+2)\circ\Psi_2^{-1}+\frac13(h+2)\circ\Psi_6^{-1}+\frac13(h+2)\circ\Psi_3^{-1}. 
\end{align*}

\item Set $f=(\wh {f}\wedge g_0)\vee g_1$.
\end{enumerate}
Then $f$ satisfies $f|_{L_4}=0$, $f|_{L_2}=1$, and takes the constant value $0$ on $F\cap [0,\frac13]\times[\frac13+3^{-k},\frac23-3^{-k}]$ and the constant value $1$ on $F\cap [\frac23,1]\times[\frac13+3^{-k},\frac23-3^{-k}]$. Applying Lemma \ref{lemma52} together with Lemmas \ref{lemma31} and \ref{lemma35}-(a), yields
\[
\mu^{(n)}_{\<\wh {f}\>}\big(F\setminus[0,1]\times[\frac13+3^{-k},\frac23-3^{-k}]\big)\leq \frac19(6+4\cdot 2^{-(k-1)})\mcE^{(n+1)}(h)=\frac19(6+2^{-k+3})\frac1{R_x^{(n+1)}}. 
\]
Finally, we obtain 
\begin{align*}
\frac1{R^{(n)}_x}\leq\mcE^{(n)}(f)&=\mu^{(n)}_{\<f\>}\big(F\setminus[0,1]\times[\frac13+3^{-k},\frac23-3^{-k}]\big)\\
\\&\leq \mu^{(n)}_{\<\wh {f}\>}\big(F\setminus[0,1]\times[\frac13+3^{-k},\frac23-3^{-k}]\big)+\mcE^{(n)}(g_0)+\mcE^{(n)}(g_1)\\
\\&\leq \frac19(6+ 2^{-k+3})\frac1{R^{(n+1)}_x}+\frac4{R^{(n)}(S_1,S_2)}.
\end{align*}
This establishes \eqref{e:lemma5.3.1}.
\end{proof}

We are now in the position to prove our uniqueness theorem.

\begin{proof}[Proof of Theorem \ref{thm2}]
We first prove \eqref{eqn51} for each 
symmetric strongly local regular Dirichlet form  $(\mcE,\mcF)$ on $L^2(F; \mu)$
satisfying the assumptions of the theorem. By Lemma \ref{lemma51}, we have $1-\frac23\cdot 3^{d_w-d_f}>0$. Choose a  sufficiently small $\delta>0$ (with $\delta<1-\frac23\cdot 3^{d_w-d_f}$)  and fix a sufficiently large  $k$ such that 
\begin{equation}\label{e:5.4} 
\frac19(6+2^{-k+3})3^{d_w-d_f}\leq 1-\delta. 
\end{equation} 
Take a subsequence $n_l,l\geq 0$ for which 
\begin{equation*} 
\frac{3^{n_l(d_f-d_w)}}{R^{(n_l)}_x\wedge R_y^{(n_l)}}\to \liminf_{n\to\infty}\frac{3^{n(d_f-d_w)}}{R_x^{(n)}\wedge R_y^{(n)}}\quad \hbox{ as }l\to\infty,
\end{equation*} 
noting that by Remarks \ref{remark42} and \ref{remark24} the latter limit is in $(0,\infty)$. 
Consequently,
\[
3^{d_f-d_w}\frac{R^{(n_l-1)}_x\wedge R_y^{(n_l-1)}}{R^{(n_l)}_x\wedge R_y^{(n_l)}}\leq 1+\delta \quad\hbox{ for all sufficiently large } l\geq 1.  
\]
Using inequalities \eqref{e:lemma5.3.1}, \eqref{e:lemma5.3.2} together with \eqref{e:5.4}, we then obtain
\begin{align}\label{e:5.6}
1&\leq \frac19(6+2^{-k+3})\frac{R_x^{(n_l-1)}\wedge R_y^{(n_l-1)}}{R_x^{(n_l)}\wedge R_y^{(n_l)}}+4\frac{{R_x^{(n_l-1)}}\wedge R_y^{(n_l-1)}}{{ R^{(n_l-1)}(S_1,S_2)}}\nonumber\\
&\leq (1-\delta^2)+4\frac{R^{(n_l-1)}_x\wedge R_y^{(n_l-1)}}{R^{(n_l-1)}(S_1,S_2)}\quad\hbox{ for large } l \geq 1 .
\end{align}
Hence, 
\begin{align*}
\limsup_{n\to\infty}\frac{3^{n(d_f-d_w)}}{R^{(n)}(S_1,S_2)}&\geq \limsup_{l\to\infty}\frac{3^{(n_l-1)(d_f-d_w)}}{R^{(n_l-1)}(S_1,S_2)}\\
&\geq \frac{\delta^2}{4}\limsup_{l\to\infty}\frac{3^{(n_l-1)(d_f-d_w)}}{R^{(n_l-1)}_x\wedge R_y^{(n_l-1)}}\geq \frac{\delta^2}{4}\liminf_{n\to\infty}\frac{3^{n(d_f-d_w)}}{R^{(n)}_x\wedge R_y^{(n)}},
\end{align*}
where the second inequality follows from \eqref{e:5.6}.
Finally, \eqref{eqn51} follows from Proposition \ref{prop41}. 

From  \eqref{eqn51} we deduce  that for any two symmetric strongly local regular Dirichlet forms $(\mcE,\mcF)$ and $(\mcE',\mcF)$ on $L^2(F; \mu)$
satisfying the assumptions of the theorem,
\begin{equation}\label{e:5.7} 
\frac{\sup(\mcE'|\mcE)}{\inf(\mcE'|\mcE)}\leq C^2.
\end{equation}

The rest of the proof proceeds by contradiction, following the  same argument as that for \cite[Theorem 1.2]{BBKT}. Assume that there exist $(\mcE',\mcF)$ and $(\mcE,\mcF)$ which are not constant multiples of each other, i.e. 
\[
\frac{\sup(\mcE'|\mcE)}{\inf(\mcE'|\mcE)}>1. 
\]
For $\eta>0$, define 
\[\mcE''=(1+\eta)\mcE'-\inf(\mcE'|\mcE)\mcE.
\]
By \cite[Theorem 2.1]{BBKT}, $(\mcE'',\mcF)$ is a strongly local, regular Dirichlet form. Moreover, $\mcE''$ also satisfies the assumptions of the theorem: 
\begin{enumerate}
    \item ${\bf HK}({d_w})$ holds because $\eta\mcE'\leq \mcE''\leq \big((1+\eta)\sup(\mcE'|\mcE)-\inf(\mcE'|\mcE)\big)\mcE$, and the stability theorem for heat kernel estimates  \cite{AB,GHL2} applies. 
    
    \item the reflection symmetry holds since for any $f\in\mcF$,
    \begin{align*} 
    \mcE''(f\circ\mcG_x)&=(1+\eta)\mcE'(f\circ\mcG_x)-\inf(\mcE'|\mcE)\mcE(f\circ\mcG_x)\\
    &=(1+\eta)\mcE'(f)-\inf(\mcE'|\mcE)\mcE(f)=\mcE''(f),
    \end{align*}
    and similarly $\mcE''(f\circ\mcG_y)=\mcE''(f)$. 
    
    \item the local symmetry follows by an argument analogous to that for the reflection symmetry. 
\end{enumerate}
Now observe that
\[
\frac{\sup(\mcE''|\mcE)}{\inf(\mcE''|\mcE)}=\frac{(1+\eta)\sup(\mcE'|\mcE)-\inf(\mcE'|\mcE)}{\eta \inf(\mcE'|\mcE)}>\frac{1}{\eta}\cdot\big(\frac{\sup(\mcE'|\mcE)}{\inf(\mcE'|\mcE)}-1\big).
\]
For sufficiently small $\eta$, this contradicts \eqref{e:5.7}  with $\mcE'$ replaced by $\mcE''$.
\end{proof}

\section{Convergence}
In this section, we prove the \textit{strong homogenization}  theorem (Theorem \ref{thm1}) and the convergence of Dirichlet forms 
$\{ \big(3^{n(d_w-d_f)}\mcE^{(r)}_{F_n},W^{1,2}(F_n)\big); n\geq 1\}$ introduced in Section \ref{sec23} 
and their associated reflected diffusions
(Theorem \ref{thm3}). In what follows, we always abbreviate $\mcE_{F_n}^{(r)}$ to $\mcE_{F_n}$.

\subsection{Mosco convergence}
This section is devoted to the proof of Theorem \ref{thm3}. Since well-established frameworks already exist, concerning tightness of the processes in \cite{BB, BB3} and passage from sub-sequential limits to sequential limits in \cite{BBKT,CC}, we only briefly outline the main ideas here. 

Before proceeding, we introduce some notation. 
\begin{enumerate}
\item For a Hunt process $(Y_t)_{t\geq 0}$ on a locally compact metric measure space $(\mathcal M,d)$, and for a subset $A\subset \mathcal M$, we write 
\[
\tau_A=\inf\{t\geq0:\,Y_t\notin A\}\quad\hbox{ and }\sigma_A=\inf\{t\geq 0:\,Y_t\in A\}
\]
for the first exit time and hitting time, respectively. 

\item We denote by $(X^{(n)}_t)_{t\geq 0}$ the Hunt process associated with $\big(3^{n(d_w-d_f)}\mcE_{F_n},W^{1,2}(F_n)\big)$ on $L^2(F_n;\mu_n)$. We write $ \mathbb{P}^{(n)}_z$ for the probability distribution of the process starting at $z\in F_n$ and denote by 
$\mathbb{E}^{(n)}_z$ its corresponding expectation. 
    
\item We denote by $(X_t)_{t\geq 0}$ the Brownian motion on $F$ associated with $(\mcE_B,\mcF)$. We write $\mathbb{P}_z$ for the probability distribution of the process starting at $z\in F$ and denote by $\mathbb{E}_{z}$ the corresponding expectation.
\end{enumerate}

\begin{lemma}\label{lemma61}
\begin{enumerate}
	\item[(a)] The uniform elliptic Harnack inequality holds for $\big(\mcE_{F_n},W^{1,2}(F_n)\big)$, i.e. there exist positive constants $C>0$ and $c\in(0,1)$, independent of $n$ (but depending on $r$), such that 
	\[
	h(z)\leq C\cdot h(z')\quad\hbox{ for all } z, z'\in B(z_0, c\cdot\rho),
	\]
	for any $n\geq 0$, $z_0\in F_n$, $\rho\in(0,1/2)$, and any nonnegative function $h\in W^{1,2}(F_n)$ that is $\mcE_{F_n}$-harmonic in $B(z_0,\rho)$.
	\item[(b)]There is a positive constant $C'$, independent of $n$ (but depending on $r$), such that 
	\[
	C'^{-1}\cdot\big(3^{n(2-d_w)}\rho^2 \vee\rho^{d_w}\big)\leq \mathbb{E}^{(n)}_z [\tau_{B(z,\rho)}]\leq C'\cdot\big( 3^{n(2-d_w)}\rho^2 \vee\rho^{d_w}\big)
	\]
	for any $n\geq 0$, $z\in F_n$, $\rho\in(0,1/2)$.
	Moreover, there is a positive constant $C''>1$, independent of $n$ (but depending on $r$), such that 
	\[
	C''^{-1}\leq
	\mathbb{E}^{(n)}_{q_1} [\sigma_{L_2\cup L_3}]\leq C''. 
	\]
	{ Recall that $q_1=(0,0)$, $L_2$ and $L_3$ are the two boundary segments of $F_0$ defined at the beginning of Section 2.} 
\end{enumerate}
\end{lemma}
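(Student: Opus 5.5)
The plan is to reduce everything to the isotropic case $r=1$, where these estimates are known from Barlow--Bass \cite{BB, BB3}, and then transfer them using two facts. First, $\mcE_{F_n}^{(r)}$ is uniformly comparable to $\mcE_{F_n}^{(1)}$:
\[
(1\wedge r^{-1})\,\mcE^{(1)}_{F_n}(f)\leq \mcE^{(r)}_{F_n}(f)\leq (1\vee r^{-1})\,\mcE^{(1)}_{F_n}(f),
\]
with the same domain $W^{1,2}(F_n)$ and the same measure $\mu_n$. Second, $\mcE^{(r)}_{F_n}$ is the form of a uniformly elliptic divergence form operator with constant coefficients. The natural route is therefore to use stability results that need only quadratic form comparability and not any symmetry. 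The constants in (a) and (b) will thus depend on $r$ only through $1\vee r\vee r^{-1}$.

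For (a), I would use the characterization of the elliptic Harnack inequality in the presence of resistance/volume control. Barlow--Bass proved a uniform (in $n$) EHI for reflected Brownian motion on $F_n$ through the ``knight's move'' and ``corner move'' arguments. These arguments use only reflection symmetry of the domain together with local comparisons of hitting probabilities. They are not obviously stable under a change of form, so I would not attempt to redo them. Instead I would appeal to the stability of EHI under bounded perturbation of the energy. This holds in the setting of uniformly volume-doubling spaces with comparable resistance, as in Barlow--Murugan, or via the equivalence of EHI with the conjunction of a Poincaré inequality, cutoff Sobolev inequality, and volume doubling (Grigor'yan--Hu--Lau \cite{GHL2}, Andres--Barlow \cite{AB}).

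Concretely, I would proceed in three steps.
\begin{enumerate}
\item Show that $(F_n,|\cdot|,\mu_n)$ with $\mcE^{(1)}_{F_n}$, rescaled by $3^{n(d_w-d_f)}$, satisfies volume doubling and a two-regime Poincaré and cutoff Sobolev inequality with space-time scale function $\Phi_n(\rho)=3^{n(2-d_w)}\rho^2\vee\rho^{d_w}$. This scale is Gaussian below the cell size $3^{-n}$ and sub-Gaussian above it, with all constants uniform in $n$; this is \cite{BB3}.
\item Observe that the Poincaré inequality and the cutoff Sobolev inequality are both stable under the energy comparison displayed above, at the cost of multiplying the constants by powers of $1\vee r\vee r^{-1}$.
\item Conclude the uniform parabolic Harnack inequality with scale $\Phi_n$, and hence EHI, uniformly in $n$.
\end{enumerate}
The main obstacle is verifying that the general stability theorems accept a non-power, $n$-dependent scale function $\Phi_n$ with uniform constants. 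I expect this to work, since $\Phi_n$ is doubling with uniform regularity exponents between $2$ and $d_w$.

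For (b), the parabolic Harnack inequality with scale $\Phi_n$, once obtained in this way, directly yields uniform two-sided heat kernel estimates. It therefore also gives $\mathbb{E}^{(n)}_z[\tau_{B(z,\rho)}]\asymp\Phi_n(\rho)$ with constants depending only on $r$. Alternatively, one can argue with the resistance estimate $R_n(B(z,\rho),B(z,2\rho)^c)\mu_n(B(z,\rho))\asymp\Phi_n(\rho)$. This estimate holds for $r=1$ by \cite{BB3} and transfers immediately, since effective resistances change by at most the factor $1\vee r\vee r^{-1}$. Combined with EHI and volume doubling, it gives the exit-time bounds via the Green function comparison $\mathbb{E}_z\tau_B=\int_B g_B(z,y)\mu_n(dy)$, using the fact that $g_B$ is comparable to the resistance on a ball of comparable size.

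Finally, $\mathbb{E}^{(n)}_{q_1}[\sigma_{L_2\cup L_3}]$ is bounded above and below by the exit-time bounds. It is at least $\mathbb{E}_{q_1}[\tau_{B(q_1,1/2)}]\gtrsim \Phi_n(1/2)\asymp1$. For the upper bound, I would iterate the uniform bound that from any point there is probability at least $c>0$ of hitting $L_2\cup L_3$ within time $C\Phi_n(1)$. This follows from the near-diagonal lower estimate, or equivalently from EHI applied in a chaining argument, and yields $\mathbb{E}\sigma\leq C''$.
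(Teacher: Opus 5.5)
Your route differs from the paper's, and for (a) and the exit-time bounds in (b) it is workable.

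\textbf{What the paper does.} It never applies stability theorems on the bounded $F_n$. It uses the unbounded pre-carpet $\wh F_0=\bigcup_n 3^nF_n$, where for $r=1$ the EHI and the heat kernel bounds with the single fixed profile $\rho^{d_w}\vee\rho^2$ are in \cite{BB3}. It applies the stability theorems once on that space (\cite{BCM} for EHI, \cite{AB,GHL2} for heat kernels). It then passes to $3^nF_n$ by localization, using the invariance of $\mcE^{(r)}$ under reflections in horizontal and vertical lines for balls meeting the top or right edge. Finally it rescales to $F_n$, which produces your profile $\Phi_n$ automatically.

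\textbf{What your route buys and costs.} You instead invoke VD $+$ PI $+$ CSI $\Leftrightarrow$ PHI on each bounded $F_n$ with the $n$-dependent profile $\Phi_n$. The gain is that you use only the two-sided comparison $\mcE^{(r)}\asymp\mcE^{(1)}$, never reflection invariance of $\mcE^{(r)}$, so the argument would cover general uniformly elliptic coefficients. The cost is the bookkeeping you already flagged: you need bounded-space versions of the characterization, with general doubling scale functions and constants depending only on structural data. Also, the $r=1$ input on the bounded $F_n$ with its reflecting outer boundary is not literally in \cite{BB3}, which works on the pre-carpet. Getting it uniformly in $n$ needs the same localization-and-folding step the paper uses, though only for $r=1$. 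CSI itself then comes from the heat kernel bounds via the characterization.

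\textbf{A small correction.} VD $+$ PI $+$ CSI characterizes PHI (equivalently, two-sided heat kernel bounds), not EHI. Your Step 3, which goes through PHI $\Rightarrow$ EHI, is the correct version.

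\textbf{The step that does not work as written.} This is the upper bound $\mathbb{E}^{(n)}_{q_1}[\sigma_{L_2\cup L_3}]\le C''$. In $F_n$ the segments $L_2,L_3$ belong to the reflecting boundary and have $\mu_n$-measure zero. Near-diagonal lower bounds, or EHI chaining inside $F_n$, only show that the process enters a neighbourhood of $L_2\cup L_3$ with positive probability, not that it hits these segments. As stated, you would need an extra capacity (fatness) estimate for boundary segments.

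\textbf{How to fix it.} The simple fix is localization, which is what the paper's ``similar argument'' amounts to. Realize $F_n$ as the corner cell of a larger domain, either $3F_{n+1}$ or the pre-carpet as in the paper. The reflected process there agrees with yours until yours hits $L_2\cup L_3$, so $\sigma_{L_2\cup L_3}$ becomes the exit time from $[0,1)^2$. That exit time lies between the exit times from the balls of radius $1$ and $\sqrt2$ about $q_1$. Since $\sqrt2<3/2$, the two-sided exit-time bound on the rescaled $F_{n+1}$ then gives both inequalities. This fix needs no reflection invariance either, so it keeps the advantage of your approach.
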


\begin{proof}
Let $\wh {F}_0=\bigcup_{n=0}^\infty 3^nF_n$ be the unbounded pre-carpet, and consider the divergence form $\big(\mcE_{\wh {F}_0},W^{1,2}(\wh {F}_0)\big)$ defined by
\[
\mcE_{\wh  F_0}(f,g)=\int_{\wh  F_0}\left(\Big(\frac{\partial f}{\partial x}\cdot\frac{\partial g}{\partial x}\Big)(x,y)+\frac1r\Big(\frac{\partial f}{\partial y}\cdot\frac{\partial g}{\partial y}\Big)(x,y)\right)dxdy.
\]
 Let  $(\wh {X}_t)_{t\geq 0}$ be the reflected Brownian motion on $\wh {F}_0$ associated with $\big(\mcE_{\wh  F_0}, W^{1,2}(\wh  F_0)\big)$. 
We write $\wh {\mathbb{P}}_z$ for the probability distribution of  $(\wh {X}_t)_{t\geq 0}$ starting at $z\in \wh {F}_0$, and denote by $\wh {\mathbb{E}}_z$ its corresponding expectation.

(a) It is known that for $r=1$, the elliptic Harnack inequality holds \cite[Theorem 1.1]{BB3}. For other values of $r\in(0,\infty)$, the stability theorem for the elliptic Harnack inequality \cite[Theorem 1.3]{BCM} implies that a scale-invariant Harnack inequality also holds for $\big(\mcE_{\wh {F}_0},W^{1,2}( \wh  F_0)\big)$ with the comparison constant independent of $n$ but dependent on $r>0$. In particular, the reflected Dirichlet forms $\big(\mcE_{3^nF_n},W^{1,2}(3^nF_n)\big)$ defined by
\[
\mcE_{3^n F_n}(f,g)=\int_{ 3^nF_n}\left(\Big(\frac{\partial f}{\partial x}\cdot\frac{\partial g}{\partial x}\Big)(x,y)+\frac1r\Big(\frac{\partial f}{\partial y}\cdot\frac{\partial g}{\partial y}\Big)(x,y)\right)dxdy
\]
satisfies the same scale-invariant Harnack inequality with the same constant as $\big(\mcE_{\wh {F}_0},W^{1,2}( \wh  F_0)\big)$. Indeed, the inequality clearly holds with the same constant on any ball that does not intersect $[0,3^n]\times\{3^n\}$ and $\{3^n\}\times [0,3^n]$; by symmetry, it then holds for any ball of radius $\rho\in (0,3^n/2)$. 

Finally, (a) follows because $\big(\mcE_{F_n},W^{1,2}(F_n)\big)$ is a scaled copy of $\big(\mcE_{3^nF_n},W^{1,2}(3^nF_n)\big)$. 
\medskip

(b) For $r=1$, the sub-Gaussian heat kernel estimate for $\big(
\mcE_{\wh  F_0}, W^{1,2}(\wh  F_0)\big)$ with exit time profile $\phi(\rho)=(\rho^{d_w}\vee \rho^2)$ holds by \cite[Theorem 1.4]{BB3}.   By the stability theorem for sub-Gaussian heat kernel estimates \cite{AB,GHL2}, it remains valid for  general $r\in(0,\infty)$, i.e.
\[
C_1^{-1}(\rho^{d_w}\vee \rho^2)\leq \wh {\mathbb{E}}_z[\tau_{B(z,\rho)}] 
\leq C_1(\rho^{d_w}\vee \rho^2)\quad\hbox{ for }z\in\wh {F}_0, \rho>0,
\]
for some constant $C_1>0$ depending on $r$.  

Using localization and symmetry as in (a), the reflected process $\wh {X}^{(n)}_t$ associated with $\big(\mcE_{3^nF_n},W^{1,2}(3^nF_n)\big)$ also satisfies 
\[
C_1^{-1}(\rho^{d_w}\vee \rho^2)\leq  \wh {\mathbb{E}}^{(n)}_z [\tau_{B(z,\rho)}]\leq C_1(\rho^{d_w}\vee \rho^2)\quad\hbox{ for }z\in 3^nF_n, \rho\in(0,3^n/2),
\]
 where  $\wh {\mathbb{E}}^{(n)}_z$ denotes the expectation with respect to the law of $\wh {X}^{(n)}_t$  starting from $z$.

The first claim of (b) then follows by scaling. The second claim follows from a similar argument. 
\end{proof}

\begin{proof}[Proof of Theorem \ref{thm3}]
The proof proceeds in several steps, all of which use the well-established approaches in previous works \cite{BB,BB3, BBKT, CC}. \medskip

Denote by $ U^{(n)}_\lambda$   the $\lambda$-resolvent operator associated with $(X^{(n)}_t)_{t\geq 0}$, i.e.
\[
 U^{(n)}_\lambda f(z)=\mathbb{E}_{z}^{(n)} \int_{0}^\infty e^{-\lambda t} f(X^{(n)}_t) dt\quad\hbox{ for }f\in C(F_n),\,z\in F_n. 
\]
Note that $ U^{(n)}_\lambda$ extends to a bounded operator on $L^2(F_n;\mu_n)$. Based on Lemma \ref{lemma61}, the same proof as in \cite[Section 3.1]{BBKT} shows that $ U^{(n)}_\lambda (f |_{F_n})$ is uniformly continuous for any bounded $f$ on $F_0$. Then, following the same arguments as in \cite[Sections 5 and 6]{BB}, for any subsequence there exists a further subsequence $\{n_l; l\geq 0\}$ and a Hunt process $(\wt{X}_t)_{t\geq 0}$ with probability law $\{ \wt{\mathbb{P}}_z; z\in F\}$ such that for any $z_{n_l}\in F_{n_l}$ and $z\in F$ with  $z_{n_l}\to z$,  the processes $(X^{(n_l)}_t)_{t\geq 0}$ under ${\mathbb{P}}^{(n_l)}_{z_{n_l}}$ converge weakly in $C ([0,\infty); \R^d)$ to $\wt{X}_t$ under $\wt{\mathbb{P}}_z$.\smallskip 

The weak convergence $(X^{(n_l)}_t)_{t\geq 0} \to (\wt{X}_t)_{t\geq 0}$ implies that 
\[
U^{(n_l)}_\lambda (f |_{F_{n_l}})(z_{n_l})\to \wt{U}_{\lambda}(f|_F)(z)
\]
 for any $z_{n_l}\in F_{n_l}$ and $z\in F$ with  $z_{n_l}\to z
$, and 
for any bounded $f$ on $F_0$, where $\wt{U}_\lambda$ is the resolvent operator associated with $(\wt{X}_t)_{t\geq 0}$.  Let $(\wt{\mcE},\wt{\mcF})$ be the Dirichlet form associated with $\wt{X}_t$. Then, by \cite[Lemma 2.11-(b), Theorem 2.13 and Proposition 2.15]{CC}, $3^{n_l(d_w-d_f)}\mcE_{F_{n_l}}$ is Mosco convergent to $\wt{\mcE}$. Moreover, $(\wt{\mcE},\wt{\mcF})$ satisfies ${\bf HK}({d_w})$, local symmetry, and reflection symmetry.
\begin{enumerate}
\item Since $\wt{\mcE}$ is the Mosco limit of $3^{n_l(d_w-d_f)}\mcE_{F_{n_l}}$, and $\mcE_B$ is the Mosco limit of the standard divergence forms {\eqref{e:0}} on $F_n$ \cite[Theorem 1.3]{CC}, it holds that 
\[
C_1^{-1}\mcE_B\leq \wt{\mcE}\leq C_1\mcE_B\ \hbox{ for some constant }C_1>1. 
\]
Therefore, ${\bf HK}(d_w)$ holds for $(\wt{\mcE},\wt{\mcF})$ by the stability theorem for heat kernel estimates \cite{AB,GHL2}.  Consequently, due to the Besov-type characterization of $\wt\mcF$ whenever {\bf HK}($d_w$) holds  \cite[Theorem 4.2]{GHL}, we have $\wt\mcF=\mcF$, the domain of the Dirichlet form $\mcE_B$.

\item For $f\in{\mcF}$, by Mosco convergence there exists a sequence  $f_{n_l}\in W^{1,2}(F_{n_l})$ that converges strongly to $f$ in $L^2$ such that 
\[
\wt{\mcE}(f)=\lim_{l\to\infty}3^{n_l(d_w-d_f)}\mcE_{F_{n_l}}(f_{n_l}).
\]
Hence,
\[
\wt{\mcE}(f\circ\mcG_x)\leq \lim_{l\to\infty}3^{n_l(d_w-d_f)}\mcE_{F_{n_l}}(f_{n_l}\circ\mcG_x)=\lim_{l\to\infty}3^{n_l(d_w-d_f)}\mcE_{F_{n_l}}(f_{n_l})=\wt{\mcE}(f). 
\]
By symmetry, it also holds that $\wt{\mcE}(f)\leq \wt{\mcE}(f\circ\mcG_x)$, so $\wt{\mcE}(f)=\wt{\mcE}(f\circ\mcG_x)$. By exactly the same proof, $\wt{\mcE}(f\circ\mcG_y)=\wt{\mcE}(f)$. 

\item The local symmetry follows from the same proof as in \cite[Theorem 3.1]{BBKT}. 
\end{enumerate}
As a consequence of Theorem \ref{thm2}, we know that $\wt{\mcE}=C_2\mcE_B$ for some positive constant $C_2$ depending on $r$ and the subsequence $\{n_l\}$. \medskip 

Next, set
\[
\alpha_n:=\mathbb{E}_{n,q_1}[\sigma_{L_2\cup L_3}],
\]
and consider the constant time changed process $\wt{X}^{(n)}_t:=X^{(n)}_{\alpha_nt}$. Denote by $\wt{\mathbb{P}}^{(n)}_z$ and 
$\wt{\mathbb{E}}^{(n)}_z$ the law and expectation associated with $\wt{X}^{(n)}_t$, respectively. Note  that 
\[
1=\wt{\mathbb{E}}^{(n)}_{q_1}  [\sigma_{L_2\cup L_3}],
\]
and $(\wt{X}^{(n)}_t)_{t\geq 0}$ is associated with the Dirichlet form $\big(\alpha_n3^{n(d_w-d_f)}\mcE_{F_n},W^{1,2}(F_n)\big)$. Then, by the previous step and following the same proof as in \cite[Remark 5.4]{BBKT} and a more detailed proof in \cite[Theorems 3.10 and 3.12]{CC}, we conclude that for some constant $C_3>0$, $\wt{X}^{(n_l)}_t$ weakly converges to $X_{C_3t}$ and $\alpha_{n_l}3^{n_l(d_w-d_f)}\mcE_{F_{n_l}}$ is Mosco convergent to $C_3\mcE_B$ as $l\to\infty$.\medskip

Finally, the limit $\lim\limits_{n\to\infty}\alpha_n$ exists by the same proof as that for \cite[Theorem 1.3]{CC}. Hence, $3^{n(d_w-d_f)}\mcE_{F_n}$ is Mosco convergent to $C\mcE_B$ for some $C>0$ depending on $r$ as $n\to\infty$.  
\end{proof}

\subsection{Strong homogenization} Finally, we prove the strong homogenization theorem, Theorem \ref{thm1}. 

\begin{proof}[Proof of Theorem \ref{thm1}]
By Theorem \ref{thm3}, the forms $C^{-1}3^{n(d_w-d_f)}\mcE_{F_n}$ are Mosco convergent to $\mcE_B$, where $C$ is the constant from Theorem \ref{thm3}. We abbreviate  $\bar{R}_{F_n}$ for the effective resistance associated with $C^{-1}3^{n(d_w-d_f)}\mcE_{F_n}$. Then
\[
H_n(r)=\frac{\bar{R}_{F_n}(L_1,L_3)}{\bar{R}_{F_n}(L_2,L_4)}.
\]
To prove the theorem, it suffices to establish the limits
\begin{align}
\lim_{n\to\infty}\bar{R}_{F_n}(L_1,L_3)=R_B(L_1,L_3),\label{e:6.1}\\  \lim_{n\to\infty}\bar{R}_{F_n}(L_2,L_4)=R_B(L_2,L_4),\label{e:6.2}
\end{align}
where $R_B$ is the effective resistance associated with $\mcE_B$.
We prove only \eqref{e:6.1} here; \eqref{e:6.2} follows by the same argument.

\begin{enumerate}
\item Take $f_n\in W^{1,2}(F_n)$ satisfying 
\[\qquad
f_n|_{L_1}=0,\ f_n|_{L_3}=1\ \hbox{ and }\ 
C^{-1}3^{n(d_w-d_f)}\mcE_{F_n}(f_n)=\frac{1}{\bar{R}_{F_n}(L_1,L_3)}. 
\]
As in the proof of \cite[Lemma 5.4]{CC}, for any subsequence there exists a further subsequence $n_l$, $l\geq 0$ and a function $f\in C(F)$ such that
\[\qquad
f_{n_l}(z_{n_l})\to f(z)\quad\hbox{ for any }z_{n_l}\in F_{n_l}\hbox{ and }z\in F\hbox{ with } z_{n_l}\to z.
\]
Consequently,
\begin{equation*}
f|_{L_1}=0,\ f|_{L_3}=1.  
\end{equation*}
Moreover, by \cite[Lemma 2.11-(b)]{CC}, the sequence $f_{n_l}$ converges strongly to $f$ in $L^2$. Hence,  by Mosco convergence (Theorem \ref{thm3}),
\[\qquad\quad 
\liminf_{l\to\infty}\frac{1}{\bar{R}_{F_{n_l}}(L_1,L_3)}=\liminf_{l\to\infty}C^{-1}3^{n_l(d_w-d_f)}\mcE_{F_{n_l}}(f_{n_l})\geq \mcE_B(f)\geq \frac{1}{R_B(L_1,L_3)}. 
\]
Since this holds for every subsequence, we obtain 
\begin{equation}\label{e:6.3} 
\liminf_{n\to\infty}\frac{1}{\bar{R}_{F_n}(L_1,L_3)}\geq \frac{1}{R_B(L_1,L_3)}.
\end{equation} 

\item Choose $h\in\mcF$ such that 
\[
h|_{L_1}=0,\ h|_{L_3}=1\ \hbox{ and }\ \mcE_B(h)=1/R_B(L_1,L_3).
\]
By \cite[Lemma 5.5]{CC}, there exists $h_n\in W^{1,2}(F_n)$ such that $h_n|_{F}$ converges  uniformly to $h$ and 
\[
\lim_{n\to\infty}C^{-1}3^{n(d_w-d_f)}\mcE_{F_n}(h_n)=\mcE_B(h)=\frac{1}{R_B(L_1,L_3)}.
\]
For any small $\delta>0$, we have $h_n|_{L_1}<\delta$ and $h_n|_{L_3}>1-\delta$ for all sufficiently large $n$. Therefore, 
\[
C^{-1}3^{n(d_w-d_f)}\mcE_{F_n}(h_n)\geq \frac{(1-2\delta)^2}{\bar R_{F_n}(L_1,L_3)}.
\]
Letting $\delta\to 0$ gives
\begin{equation} \label{e:6.4} 
\limsup_{n\to\infty}\frac{1}{\bar{R}_{F_n}(L_1,L_3)}\leq \frac{1}{R_B(L_1,L_3)}. 
\end{equation} 
\end{enumerate}
Combining \eqref{e:6.3} and \eqref{e:6.4} yields \eqref{e:6.1}. This completes the proof. 
\end{proof}

\bibliographystyle{amsplain}

\begin{thebibliography}{10}
\bibitem{AB}
S. Andres and M.T. Barlow, \emph{Energy inequalities for cutoff functions and some applications}, J. Reine Angew. Math. 699 (2015), 183--215.

\bibitem{BB}
M.T. Barlow and R.F. Bass, \emph{The construction of Brownian motion on the Sierpinski carpet}, Ann. Inst.
Henri Poincar\'{e} 25 (1989), no. 3, 225--257.

\bibitem{BB1} M.T. Barlow and R.F. Bass,
\emph{Local times for Brownian motion on the Sierpiński carpet,} Probab. Theory Related Fields 85 (1990), no. 1, 91--104.

\bibitem{BB4}
M.T. Barlow and R.F. Bass, \emph{On the resistance of the Sierpiński carpet,} Proc. Roy. Soc. London Ser. A  431 (1990), no. 1882, 345--360.

\bibitem{BB2}
M.T. Barlow and R.F. Bass, \emph{Transition densities for Brownian motion on the Sierpinski carpet,} Probab. Theory Related Fields 91 (1992), 307--330.


\bibitem{BB3}
M.T. Barlow and R.F. Bass, \emph{Brownian motion and harmonic analysis on Sierpinski carpets,} Canad. J. Math. 51 (1999), no. 4, 673--744.

\bibitem{BBKT} M.T. Barlow, R.F. Bass, T. Kumagai and A. Teplyaev, \emph{Uniqueness of Brownian motion on Sierpinski carpets}, J. Eur. Math. Soc. 12 (2010), no. 3, 655--701.

\bibitem{BCM} 
M.T. Barlow, Z.-Q. Chen and M. Murugan, \emph{Stability of EHI and regularity of MMD spaces}, arXiv: 2008.05152.


\bibitem{BHHW} M.T. Barlow, K. Hattori, T. Hattori and H. Watanabe, \emph{Weak homogenization of anisotropic diffusion on pre-Sierpiński carpets.} Comm. Math. Phys. 188 (1997), no. 1, 1--27.


\bibitem{CC} S. Cao and Z.-Q. Chen, \emph{Convergence of resistances on generalized Sierpiński carpets}, J. Eur. Math. Soc. (2026), to appear. DOI: 10.4171/JEMS/1775.


\bibitem{CQ2} S. Cao and H. Qiu,
\emph{Uniqueness and convergence of resistance forms on unconstrained Sierpinski carpets}, Trans. Amer. Math. Soc. (2026), to appear. DOI: 10.1090/tran/9729.

\bibitem{CF} Z.-Q. Chen and M. Fukushima, \emph{Symmetric Markov processes, time change, and boundary theory}. London Math. Soc. Monogr. Ser., 35 Princeton University Press, Princeton, NJ, 2012, xvi+479 pp.


\bibitem{FOT} M. Fukushima, Y. Oshima and M. Takeda, \emph{Dirichlet forms and symmetric Markov processes.} Second revised and extended edition,  De Gruyter Studies in Mathematics, 19. Walter de Gruyter \& Co., Berlin, 2011.


\bibitem{GHL} A. Grigor\'yan, J. Hu and K.-S. Lau, \emph{Heat kernels on metric measure spaces and an application to semilinear elliptic equations}, Trans. Amer. Math. Soc. 355 (2003), no. 5, 2065--2095.


\bibitem{GHL2}
A. Grigor\'yan, J. Hu and K.-S. Lau, \emph{Generalized capacity, Harnack inequality and heat kernels of Dirichlet forms on metric measure spaces}, J. Math. Soc. Japan 67 (2015), no. 4, 1485--1549.

\bibitem{Hino}
M. Hino, \emph{Upper estimate of martingale dimension for self-similar fractals},  Probab. Theory Related Fields 156 (2013), no. 3-4, 739--793.

\bibitem{HZ}
J. Hu and M. Z\"ahle, \emph{Potential spaces on fractals}, Studia Math. 170 (2005), no. 3, 259--281.


\bibitem{Ka}
N. Kajino, \emph{An elementary proof that walk dimension is greater than two for Brownian motion on Sierpi\'nski carpets}, 
Bull. Lond. Math. Soc. 55 (2023), no. 1, 508--521.



\bibitem{KZ}
S. Kusuoka and X.Y. Zhou, \emph{Dirichlet forms on fractals: Poincar\'{e} constant and resistance}, Probab. Theory Related Fields 93 (1992), no. 2, 169--196.

\bibitem{Mosco}
U.~Mosco, \emph{Composite media and asymptotic {D}irichlet forms}, J. Funct. Anal. 123 (1994), no.~2, 368--421.

\bibitem{Murugan}
M. Murugan, \emph{Heat kernel for reflected diffusion and extension property on uniform domains}, Probab. Theory Related Fields 190 (2024), no. 1-2, 543--599.
\end{thebibliography}

\end{document}